\definecolor{codegreen}{rgb}{0,0.6,0}
\definecolor{codegray}{rgb}{0.5,0.5,0.5}
\definecolor{codepurple}{rgb}{0.58,0,0.82}
\definecolor{backcolour}{rgb}{0.95,0.95,0.92}
\lstdefinestyle{mystyle}{
    backgroundcolor=\color{backcolour},   
    commentstyle=\color{codegreen},
    keywordstyle=\color{magenta},
    numberstyle=\tiny\color{codegray},
    stringstyle=\color{codepurple},
    basicstyle=\footnotesize,
    breakatwhitespace=false,         
    breaklines=true,                 
    captionpos=b,                    
    keepspaces=true,                 
    numbers=left,                    
    numbersep=5pt,                  
    showspaces=false,                
    showstringspaces=false,
    showtabs=false,                  
    tabsize=2
}
\newtheorem{theorem}{Theorem}[section]
\newtheorem{lemma}[theorem]{Lemma}
\newtheorem{proposition}[theorem]{Proposition}
\newtheorem{corollary}[theorem]{Corollary}
\theoremstyle{definition}
\newtheorem{definition}[theorem]{Definition}
\newtheorem{example}{Example}[theorem]
\theoremstyle{remark}
\numberwithin{equation}{section}
\numberwithin{figure}{subsection}
\newcommand{\mon}{\text{Mon}}
\newcommand{\Z}{\mathbb{Z}}
\newcommand{\R}{\mathbb{R}}
\newcommand{\HH}{\mathbb{H}}
\newcommand{\Q}{\mathbb{Q}}
\newcommand{\C}{\mathbb{C}}
\newcommand{\absgal}{\Gal(\overline{\Q}/\Q)}
\newcommand{\Gal}{\text{Gal}}
\newcommand{\vertiii}[1]{{\left\vert\kern-0.25ex\left\vert\kern-0.25ex\left\vert #1 
    \right\vert\kern-0.25ex\right\vert\kern-0.25ex\right\vert}}
\DeclareMathOperator*{\modd}{mod}
\DeclareMathOperator*{\phii}{\varphi}
\DeclareMathOperator*{\aut}{\text{Aut}}
\DeclareMathOperator*{\homeo}{\text{Homeo}}
\begin{document}
\title{Counting the Number of \\ Quasiplatonic Topological Actions\\ of the Cyclic Group on Surfaces}
\author{Charles Camacho}
\address{Kidder Hall 314 \\ Oregon State University \\ Corvallis, OR 97333}
\email{camachoc@math.oregonstate.edu}
\subjclass[2010]{Primary: 57M60. Secondary: 14H57, 30F99}

\begin{abstract}
Define $QC(n)$ to be the number of quasiplatonic topological actions of the cyclic group $C_n$ on surfaces of genus at least two. We use formulas of Benim and Wootton \cite{ben} to give an explicit formula for $QC(n)$. In addition, we relate the number of quasiplatonic topological actions of $C_n$ to the number of regular dessins d'enfants having $C_n$ as a group of automorphisms.
\end{abstract}

\maketitle

\section{Introduction}

The enumeration of combinatorial maps, topological group actions, and Riemann surfaces are intimately related. The aim of this paper is to derive the total number of distinct quasiplatonic topological group actions of the cyclic group $C_n\cong\langle \rho:\rho^n=1\rangle$ on compact, connected, orientable surfaces of genus at least two. We find that the number of distinct quasiplatonic $C_n$-actions includes a known count on the number of bipartite maps as two-cell embeddings (i.e., hypermaps) with $C_n$ as its group of symmetries. 

Our explicit formula on enumerating quasiplatonic actions on surfaces makes a connection to the number of \emph{(regular) dessins d'enfants} on these surfaces. The modern theory of dessins was originally introduced by A. Grothendieck in \cite{groth}. Dessins are interesting to Galois theory and algebraic geometry thanks to the following incredible fact due to Bely{\u\i}'s Theorem \cite{belyi}: compact, connected surfaces admitting an embedded dessin are in one-to-one correspondence with complex projective curves defined over a number field. A recent result of G. Gonz{\'a}lez-Diez and A. Jaikin-Zapirain proves that the absolute Galois group $\absgal$ acts faithfully on the set of regular dessins and their underlying (quasiplatonic) surfaces \cite{gonz}. Moreover, Grothendieck observed that the existence of a dessin endows the surface with a unique complex structure \cite{groth}. Thus, studying the absolute Galois group amounts to understanding its action on regular dessins and quasiplatonic surfaces.

On the other hand, there is a rich history of group actions on surfaces. Certain group actions, namely quasiplatonic topological group actions, turn out to be well-suited for the language of dessins.

There are many known results about cyclic group actions on surfaces. W. Harvey in \cite{harvey} determined necessary and sufficient conditions for $C_n$ to act topologically on a surface of genus $\sigma\geq 2$, which then provides admissible \emph{signatures} of a surface guaranteeing such a $C_n$-action. K. Lloyd in \cite{lloyd1972} gives the number of topological actions of a given finite group $G$ on a surface $X$ with a fixed Fuchsian group $\Delta$ containing a torsion-free surface group $\Gamma$ uniformizing $X$. In particular, Lloyd obtains a generating function for the number of actions when $G=C_p$ for an odd prime $p$ (see Section \ref{subsec:lloyd} in this paper). Lloyd also outlines when topological actions are equivalent in terms of surface kernel epimorphisms (Theorem 4, \cite{lloyd1972}) based on results of H. Zieschang \cite{zieschang}. A. Broughton in \cite{broughton} describes these topological group action equivalences in terms of \emph{generating vectors} for $G$ (Proposition 2.2, \cite{broughton}). We can then connect topological actions to \emph{conformal actions} thanks to a result by A. Wootton (Theorem 3.6, \cite{woottextend}): a topological action of a finite group $G$ can uniquely extend to a conformal action on a surface if and only if $G$ is a quotient of a (hyperbolic) triangle group. In our case, the automorphism group of a dessin is indeed given by a quotient of a triangle group. Thus, quasiplatonic topological group actions can be described visually by a regular dessin d'enfant.

Our computation makes extensive use of the explicit formulas of R. Benim and A. Wootton \cite{ben}, which enumerate the distinct quasiplatonic topological actions of $C_n$ on surfaces of a given admissible signature. Those enumeration results build upon the counting formulas of A. Wootton (Proposition 4.4, \cite{woottextend}), which are derived by analyzing the equivalence classes of generating vectors of $C_n$. Moreover, the counting functions of \cite{ben} (restated in Theorem \ref{thm:ben} below) are dependent on how $C_n$ acts topologically on a surface, which is related to the genus of the surface by the Riemann-Hurwitz formula. The novelty of our enumeration formula is that it will be independent of the genus of the surface, and in particular, it will give a measure of the prevalence of possible quasiplatonic topological $C_n$-actions.

We count the distinct quasiplatonic topological actions of $C_n$ on surfaces by the following procedure:
\begin{enumerate}
\item find all admissible signatures for a given $n$ satisfying Harvey's Theorem (Theorem 2, \cite{harvey}; or see Theorem \ref{thm:harvey} below);
\item use one of three different formulas from \cite{ben} giving the number of distinct topological actions of $C_n$ on quasiplatonic surfaces, based on three different signature types;
\item compute the sum of the values given by the formulas in \cite{ben} over all possible signatures for $n$. This number will be denoted as $QC(n)$.
\end{enumerate}


There are several motivating reasons for our work.
\begin{itemize}
\item \emph{Mapping Class Group} It is of great interest to understand the structure of the group $\Gamma_\sigma$ of orientation-preserving self-homeomorphisms of a surface $X$ of genus $\sigma$ up to isotopy, called the \emph{mapping class group}. In particular, since we consider surfaces having a complex structure, $\Gamma_\sigma$ will be the group of outer automorphisms of the fundamental group of a Riemann surface of genus $\sigma$ which preserve orientation \cite{broughton}. We assume $\sigma\geq 2$, so that $\aut(X)$ is a finite group of size at most $84(\sigma-1)$ by Hurwitz' Theorem (e.g., \cite{farkas}). Then equivalence classes of finite group conformal actions on surfaces are in one-to-one correspondence with conjugacy classes of finite subgroups of $\Gamma_\sigma$ (Section 2.1, \cite{broughton}). Thus, enumeration formulas for the number of group actions provide, for instance, lower bounds on the number of conjugacy classes of maximal finite subgroups of $\Gamma_\sigma$. Our counting formula in this paper can give an overall size of the number of conjugacy classes of cyclic subgroups of $\Gamma_\sigma$ as $\sigma$ ranges over all genera of surfaces admitting a $C_n$ action.

\item\emph{Moduli Space of Riemann Surfaces} Suppose the genus $\sigma$ of a surface $X$ is at least two, so that $\aut(X)$ is a finite group. Then $\aut(X)$ determines a conjugacy class of subgroups of the mapping class group, $\Gamma_\sigma$. Conversely, the surfaces whose full automorphism group corresponds to a fixed conjugacy class of subgroups of $\Gamma_\sigma$ stratifies the \emph{moduli space} $M_\sigma$ of Riemann surfaces of genus $\sigma$ into a finite, disjoint union of smooth subvarieties (Theorem 2.1, \cite{broughton}). In this way, studying group actions on Riemann surfaces may help better understand the singularities of $M_\sigma$. 


\item \emph{Quasiplatonic Surfaces} A compact Riemann surface with a regular ramified covering of the Riemann sphere over three branch values is called \emph{quasiplatonic}. A compact Riemann surface is quasiplatonic if and only if it admits a regular dessin d'enfant. The quasiplatonic surfaces are the most symmetric of the compact Riemann surfaces, containing the Hurwitz curves whose full automorphism group reaches the upper bound of $84(\sigma-1)$ (Section 5.1.2, \cite{jones}). It turns out that there are only finitely many quasiplatonic surfaces in each genus $\sigma\geq 2$, which begs the question posed by J. Wolfart in \cite{wolf}: how many quasiplatonic surfaces are there? J. Schalge-Puchta and Wolfart derive asymptotic bounds for the number of quasiplatonic surfaces of genus $\sigma$, on the order of $\sigma^{\log\sigma}$ (\cite{puchta}). We hope that our enumeration formula on quasiplatonic cyclic actions, along with the extension results of \cite{woottextend}, can determine a lower bound on the number of quasiplatonic surfaces. Indeed, for cyclic prime quasiplatonic actions, Riera and Rodriguez (Theorem 1, \cite{riera}) show that distinct actions correspond to the distinct Lefschetz curves. Moreover, quasiplatonic surfaces, and their higher dimensional analogues called Beauville surfaces, play a key role in studying the absolute Galois group \cite{gonz}.
\item \emph{Dessins d'Enfants} A combinatorial map can be used to describe a quasiplatonic action of a finite group $G$ on a surface $X$. Suppose $G$ acts conformally on $X$, i.e., $G$ injects into $\aut(X)$, and suppose that the orbit space $X/G$ is conformally equivalent to the Riemann sphere with ramified covering $\beta:X\rightarrow X/G$ branching over three values. Then there is an associated regular dessin $D:=\beta^{-1}([0,1])$ whose automorphism group is isomorphic to $G$. We are interested in enumerative properties of quasiplatonic surfaces as they relate to the combinatorics of a dessin.
\end{itemize}

\subsection{Structure of the paper}

The paper will be structured as follows. Section \ref{sec:results} will state the main results of the paper. Section \ref{sec:prelim} will provide preliminary background on Riemann surfaces, dessins d'enfants, quasiplatonic surfaces, and quasiplatonic topological group actions. Section \ref{sec:lem} contains various lemmas and definitions. Sections \ref{sec:qceven} and \ref{sec:qcodd} will be devoted to proving the main results. Section \ref{sec:conseq} will discuss the consequences of deriving the explicit form of $QC(n)$, as well as some key examples. Section \ref{sec:conc} concludes the paper with future directions for research.

\section{Results}\label{sec:results}

Throughout, we write the positive integer $n$ as $n=\prod_{i=1}^rp_i^{a_i}$ for distinct primes $p_i$ and positive integers $a_i$.

\begin{theorem}\label{thm:qceven}
Suppose $n$ is even and $n\geq 8$, so that $p_1=2$. Then the number of distinct topological actions $QC(n)$ of $C_n$ on quasiplatonic surfaces is
\begin{equation*}
QC(2^{a_1}p_2^{a_2}\cdots p_r^{a_r})=2^{a_1-2}\left(\prod_{i=2}^rp_i^{a_i-1}(p_i+1)\right)-1+\left\lbrace\begin{array}{lr}
 2^{r-2} & a_1=1\\
 2^{r-1} & a_1=2\\
 2^{r} & a_1\geq 3
\end{array}\right..
\end{equation*}
\end{theorem}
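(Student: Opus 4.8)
The plan is to follow the three-step procedure outlined in the introduction: enumerate admissible signatures via Harvey's Theorem, apply the appropriate Benim--Wootton formula to each, and sum. First I would recall that any quasiplatonic $C_n$-action has signature $(0; m_1, m_2, m_3)$ with $\mathrm{lcm}(m_i, m_j) = n$ for each pair $i \neq j$, together with the parity/genus constraint from the Riemann--Hurwitz formula encoded in Harvey's Theorem (Theorem \ref{thm:harvey}). The first task is therefore to classify the triples $(m_1, m_2, m_3)$ of divisors of $n$ satisfying these pairwise-lcm conditions, up to the equivalence that matches the counting in \cite{ben}. Since $n = 2^{a_1} p_2^{a_2} \cdots p_r^{a_r}$, this reduces, prime by prime, to choosing for each $p_i$ the multiset of exponents $\{b_{i,1}, b_{i,2}, b_{i,3}\}$ with $0 \le b_{i,j} \le a_i$ and the top value $a_i$ attained at least twice; the count of such exponent-multisets at a single odd prime is $3a_i + 1$, and at the prime $2$ the relevant count must be split according to whether $a_1 = 1$, $a_1 = 2$, or $a_1 \ge 3$, which is precisely where the three cases in the statement originate.

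Next I would sort the admissible signatures into the three signature types of \cite{ben} (restated in Theorem \ref{thm:ben}) and apply the corresponding formula to each type. The key organizing observation is that the Benim--Wootton count for a fixed signature is a product over the primes dividing $n$ of local factors, because the number of equivalence classes of generating vectors of $C_n$ is multiplicative over the primary decomposition. This lets me write $QC(n)$ as a sum, over the (now classified) signatures, of products of local contributions, and then interchange the sum and product: the total becomes a product over $i = 2, \dots, r$ of "$\sum_{\text{local exponent patterns}} (\text{local count})$" times the analogous sum at the prime $2$. I expect each odd-prime local sum to collapse to $p_i^{a_i - 1}(p_i + 1)$ — matching the product appearing in the theorem — after accounting for the Euler-totient factors $\phi(\cdot)$ that appear in the generating-vector count and dividing by the order of $\mathrm{Aut}(C_n)$ acting on vectors, i.e.\ the $\varphi(n)$-type normalization built into \cite{ben}.

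The prime $2$ requires separate bookkeeping: the count of admissible local exponent patterns and their generating-vector contributions behaves differently in the three ranges $a_1 = 1, 2, \ge 3$ (reflecting, e.g., that $-1$ is the unique element of order $2$ in $C_{2^{a_1}}$ only when $a_1 \ge 2$, and the hyperbolicity/Harvey constraints interact with small powers of $2$), producing the leading factor $2^{a_1 - 2}$ together with the case-dependent additive terms $2^{r-2}, 2^{r-1}, 2^{r}$. Assembling the odd part $\prod_{i=2}^r p_i^{a_i-1}(p_i+1)$ with the $2$-part and then subtracting the single overcounted action (the $-1$, which I anticipate comes from a signature counted in two type-classes, or from the degenerate symmetric triple $(n,n,n)$-type contribution that Harvey's bound excludes in low genus) yields the claimed closed form. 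The hypotheses $n$ even and $n \ge 8$ ensure $a_1 \ge 1$ with enough room that the Riemann--Hurwitz genus-$\ge 2$ condition never removes any of the generic signatures, so no further edge corrections are needed.

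The main obstacle I anticipate is step two done carefully: verifying that the three formulas of \cite{ben}, which are stated per-signature and in terms of the genus, really do reorganize into a clean multiplicative-over-primes sum, and in particular pinning down exactly which signatures fall into which of the three types and confirming that the boundary signatures (those with some $m_i$ small, or with repeated entries) are counted with the correct multiplicity and not double-counted across types. Getting the single $-1$ correction and the three-way split at the prime $2$ exactly right — rather than off by a bounded additive constant — is the delicate part; I would check it against small cases ($n = 8, 12, 16, 24$) computed directly from generating vectors before trusting the general manipulation.
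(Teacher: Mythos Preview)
Your proposal has a genuine gap at its central step. You assert that ``the Benim--Wootton count for a fixed signature is a product over the primes dividing $n$ of local factors'' and plan to interchange the sum over signatures with a product over primes. This is not correct as stated. Look again at the three formulas in Theorem~\ref{thm:ben}: the two-equal case involves the additive term $\tau_1(n,n_1)$, and the all-equal case involves the additive constant $3$ and the term $2\tau_2(n)$. These additive pieces prevent the $T$-value of a single signature from being a pure product of local factors. More seriously, \emph{which} of the three formulas applies to a given signature depends on whether two or three periods coincide, and that is a global condition on all prime exponents simultaneously, not a prime-by-prime condition. So the sum over admissible signatures does not factor in the way you describe, and your acknowledged ``main obstacle'' is in fact fatal to the approach as written.

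The paper's actual proof is organized quite differently. It proceeds by induction on the number $r$ of distinct prime factors, via a \emph{signature-extension} mechanism: given the full list of admissible signatures for $C_n$, one obtains all admissible signatures for $C_{n\cdot p_{r+1}^{a_{r+1}}}$ by inserting a power of $p_{r+1}$ into each period in all allowable ways, and one then tracks how each $T$-value changes under this extension. The outcome is a recursive formula of the shape
\[
QC(n\cdot p_{r+1}^{a_{r+1}})=\bigl(QC(n)+1-2^{r-2}\bigr)\,p_{r+1}^{a_{r+1}-1}(p_{r+1}+1)-1+2^{r-1}
\]
when $a_1=1$, together with a separate recursion in the power of $2$,
\[
QC(2^{a_1}m)=2\,QC(2^{a_1-1}m)+1+\begin{cases}0&2\le a_1\le 3,\\ -2^r&a_1\ge 4,\end{cases}
\]
for odd $m$. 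The additive constants here are exactly what absorb the non-multiplicative $\tau_1$, $\tau_2$, and $\tfrac12$ terms from Theorem~\ref{thm:ben}; establishing them requires a careful case split on whether the extended signature has two equal periods and on where the ranging exponent $k_i$ sits (zero, interior, or maximal). Your speculation about the source of the $-1$ (overcounting across type classes, or an excluded $(n,n,n)$ triple) is also off: it emerges from the recursion, not from any exclusion. If you want to pursue a multiplicative reorganization, you would need to isolate the ``main term'' $\tfrac16 r(C_n)$ first and then control the correction $QC(n)-\tfrac16 r(C_n)$ separately; but that correction is precisely the $-1+2^{r-\bullet}$ piece, and proving it still seems to require the same inductive bookkeeping the paper carries out.
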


\begin{theorem}\label{thm:qcodd}
Suppose $n$ is odd, with either $n\geq 5$. Then the number of distinct topological actions $QC(n)$ of $C_n$ on quasiplatonic surfaces is

\begin{align*}
QC(p_1^{a_1}\cdots p_r^{a_r})&=\frac{1}{6}\left(\prod_{i=1}^r p_i^{a_i-1}(p_i+1)\right)-1\\
&\qquad +\left\lbrace\begin{array}{lr}
 2^{r-1} & \quad p_1=3, a_1\geq 2, p_i\equiv 1\modd 6 \text{ for } 2\leq i\leq r;\\
  &\quad\text{ or } p_i\equiv 5\modd 6\text{ for some } i\\
   \\
\frac{4}{3}\cdot 2^{r-1} & \quad p_1=3, a_1=1, p_i\equiv 1\modd 6\text{ for } 2\leq i\leq r\\
   \\
\frac{5}{3}\cdot 2^{r-1} & p_i\equiv 1\modd 6\text{ for all } i
\end{array}\right..
\end{align*}
\end{theorem}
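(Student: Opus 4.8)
The plan is to reduce $QC(n)$ for odd $n$ to a finite sum over admissible signatures, evaluate that sum in closed form, and then track how the "boundary" contributions depend on the congruence classes of the primes $p_i$ modulo $6$. First I would invoke Harvey's Theorem (Theorem~\ref{thm:harvey}) to parametrize the admissible signatures for a quasiplatonic $C_n$-action: these are triples $(m_1,m_2,m_3)$ with each $m_j \mid n$, $\operatorname{lcm}(m_i,m_j)=n$ for each pair, and a further divisibility/parity condition (automatically satisfied here since $n$ is odd), subject to the hyperbolicity constraint $\sum_j(1-1/m_j) > 2$, i.e.\ excluding the spherical and Euclidean triples. For odd $n \ge 5$ every signature with all three $m_j$ dividing $n$ and pairwise lcm equal to $n$ is automatically hyperbolic except for the finitely many small exceptional triples, so the genus-independence claimed in the introduction comes down to summing the Benim--Wootton counts (Theorem~\ref{thm:ben}) over this combinatorial set of triples.

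Next I would split the signatures into the three "types" referenced in step (2) of the counting procedure — roughly, according to how many of the $m_j$ equal $n$ versus are proper divisors, which governs which of the three Benim--Wootton formulas applies — and compute the contribution of each type. The main computation is a multiplicativity argument: writing $n = \prod p_i^{a_i}$, the set of admissible triples and the Benim--Wootton counts factor over the prime powers $p_i^{a_i}$, so the total sum becomes a product of local factors, one per prime. For a single prime power $p^a$ the local sum is an elementary (if fiddly) rational-function-in-$p$ computation, and assembling the product yields the leading term $\frac16\prod_i p_i^{a_i-1}(p_i+1)$ together with lower-order correction terms. The $\frac16$ is exactly $\frac{1}{|\operatorname{Aut}|}$-type normalization coming from the $S_3$-action permuting the three branch points together with the action of $(\Z/n)^*$ on generating vectors, matching the even case where the analogous constant is $\frac14 = \frac{1}{2^2}$.

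The delicate part, and what produces the three-case split in the statement, is the treatment of the small exceptional signatures that must be removed (the non-hyperbolic triples) together with the signatures where $\operatorname{Aut}$ of the generating vector is larger than generic — these are governed by whether $n$, or some $m_j$, is divisible by $3$, and by whether $p^{a-1}(p+1)$ (equivalently the number of order-$p^a$ elements up to the relevant symmetry) is divisible by $3$, which is controlled by $p \bmod 6$. Concretely: $p \equiv 1 \pmod 6$ makes $p+1 \equiv 2$, so a factor of $3$ is absent; $p \equiv 5 \pmod 6$ makes $p + 1 \equiv 0 \pmod 3$; and $p = 3$ is its own case with the further refinement $a_1 = 1$ versus $a_1 \ge 2$ because $3^{a_1-1}$ contributes a factor of $3$ exactly when $a_1 \ge 2$. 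I would isolate these corrections by computing the "defect" between the naive product formula and the true count: the $-1$ accounts for one over-counted or spurious signature uniformly, and the bracketed $2^{r-1}$-type terms collect the remaining exceptional signatures, whose number is $2^{r-1}$ (one binary choice per prime for which of two symmetric roles it plays) scaled by $1$, $\frac43$, or $\frac53$ according to how the $3$-divisibility interacts.

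The hard part will be organizing the exceptional-signature bookkeeping so that it genuinely collapses to the clean trichotomy stated: one must verify that the exceptional triples contribute a quantity that is a function only of $r$ and the residues $p_i \bmod 6$ (and $a_1$ when $p_1 = 3$), with all genus dependence and all dependence on the sizes $a_i$ (for $i \ge 2$) cancelling. I expect this to require a careful case analysis of which generating vectors for $C_n$ have nontrivial stabilizer under the combined $S_3 \times (\Z/n)^*$ action — essentially a Burnside/orbit-counting refinement of the Benim--Wootton formulas — and matching the resulting correction against the three displayed constants $2^{r-1}$, $\frac43 \cdot 2^{r-1}$, $\frac53 \cdot 2^{r-1}$. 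Everything else is a routine, if lengthy, product-over-primes evaluation parallel to the even case $n \ge 8$ already handled in Theorem~\ref{thm:qceven}.
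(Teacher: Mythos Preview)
Your high-level strategy---parametrize admissible signatures by Harvey's Theorem, apply the three Benim--Wootton formulas, and sum---is exactly what the paper does. But your proposed execution diverges from the paper's in a way that introduces a real gap.

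The paper does \emph{not} use a multiplicativity argument. Instead, it fixes one prime $p_{r+1}$ and proves a recursion of the shape
\[
QC(n\cdot p_{r+1}^{a_{r+1}}) = \bigl(QC(n)+1-c\cdot 2^{r-1}\bigr)\,p_{r+1}^{a_{r+1}-1}(p_{r+1}+1) - 1 + c\cdot 2^{r},
\]
with the constant $c\in\{1,\tfrac43,\tfrac53\}$ depending on the case, and then closes the induction on $r$. Your claim that ``the set of admissible triples and the Benim--Wootton counts factor over the prime powers'' is not correct as stated: the lcm conditions in Harvey's Theorem couple the primes (for each prime $p_i$, at least two of the three exponents must be maximal), so the signature set is not a product, and $QC$ itself is visibly non-multiplicative from the formula. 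The multiplicative structure lives only in the leading term $\tfrac16\prod p_i^{a_i-1}(p_i+1)$; extracting it requires exactly the kind of recursive bookkeeping the paper carries out, not a direct product decomposition.

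The more serious gap is that you have misidentified the mechanism for the $\bmod\ 6$ trichotomy. It does \emph{not} come from hyperbolicity (for odd $n\ge 5$ every Harvey-admissible triple is already hyperbolic), nor from divisibility of $p+1$ by~$3$. It comes from the function $\tau_2(n)$ in the Benim--Wootton formula for the signature $(n,n,n)$: the number of solutions of $x^2+x+1\equiv 0\pmod n$, equivalently the number of primitive cube roots of unity mod~$n$. The paper proves (Lemma~\ref{lem:tau2}) that $\tau_2(p^a)$ equals $2$, $1$, or $0$ according as $p\equiv 1\pmod 6$, $p=3$ with $a=1$, or otherwise, and this is precisely what generates the three constants $\tfrac53$, $\tfrac43$, $1$ multiplying $2^{r-1}$. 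Your Burnside-style intuition about extra stabilizer on the $(n,n,n)$ signature is pointing in the right direction, but until you name $\tau_2$ and compute it, you cannot explain why the cases split the way they do. Likewise, the $-1$ and the $2^{r-1}$ terms in the paper arise from the $2^r-2$ ``extremal'' exponent tuples (each $k_i\in\{0,a_i\}$, not all equal) where the $\tau_1$ correction in the Benim--Wootton formula for two-equal-period signatures is nonzero---not from removing spherical or Euclidean triples.
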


The formulas for $QC(n)$ above are related to the number of regular dessins $D$. Let $r(C_n)$ be the number of regular dessins, up to isomorphism, with $\aut(D)\cong C_n$. G. Jones (Example 3.1, \cite{jones1}) determined an explicit formula for $r(C_n)$:
\begin{equation*}
r(C_n)=n\prod_{p\mid n}\left(1+\frac{1}{p}\right).
\end{equation*}
This allows us to rewrite the formulas for $QC(n)$ as follows.

\begin{corollary}
For even $n\geq 8$,
\begin{equation*}
QC(n)-\frac{1}{6}r(C_n)=-1+\left\lbrace\begin{array}{lr}
 2^{r-2} & a_1=1\\
 2^{r-1} & a_1=2\\
 2^{r} & a_1\geq 3.
 \end{array}\right..
\end{equation*}
For odd $n\geq 5$,
\begin{equation*}
QC(n)-\frac{1}{6}r(C_n)=-1+\left\lbrace\begin{array}{lr}
 2^{r-1} & \quad p_1=3, a_1\geq 2, p_i\equiv 1\modd 6 \text{ for } 2\leq i\leq r;\\
  &\quad\text{ or } p_i\equiv 5\modd 6\text{ for some } i\\
   \\
\frac{4}{3}\cdot 2^{r-1} & \quad p_1=3, a_1=1, p_i\equiv 1\modd 6\text{ for } 2\leq i\leq r\\
   \\
\frac{5}{3}\cdot 2^{r-1} & p_i\equiv 1\modd 6\text{ for all } i
\end{array}\right..
\end{equation*}
\end{corollary}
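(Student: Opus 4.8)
The plan is to derive the identity directly from Theorems~\ref{thm:qceven} and~\ref{thm:qcodd} together with Jones' formula $r(C_n)=n\prod_{p\mid n}(1+1/p)$, by checking that $\tfrac{1}{6}r(C_n)$ coincides with the ``leading term'' appearing in each formula for $QC(n)$. The first step is to rewrite $r(C_n)$ in terms of the factorization $n=\prod_{i=1}^r p_i^{a_i}$: since $p_i^{a_i}(1+1/p_i)=p_i^{a_i-1}(p_i+1)$, clearing denominators gives $r(C_n)=\prod_{i=1}^r p_i^{a_i-1}(p_i+1)$.

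For odd $n\ge 5$ every prime $p_i$ is odd, so no factor is distinguished, and dividing the product above by $6$ yields $\tfrac{1}{6}r(C_n)=\tfrac{1}{6}\prod_{i=1}^r p_i^{a_i-1}(p_i+1)$. This is exactly the first summand in the formula for $QC(n)$ in Theorem~\ref{thm:qcodd}, so subtracting it leaves precisely the ``$-1$ plus three-case'' expression, which is the claim in the odd case.

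For even $n\ge 8$ the prime $p_1=2$ must be separated out: the factor $(1+1/2)=3/2$ together with the factor $2^{a_1}$ from $n$ contributes $3\cdot 2^{a_1-1}$, so $r(C_n)=3\cdot 2^{a_1-1}\prod_{i=2}^r p_i^{a_i-1}(p_i+1)$ and hence $\tfrac{1}{6}r(C_n)=2^{a_1-2}\prod_{i=2}^r p_i^{a_i-1}(p_i+1)$. This matches the leading term of $QC(n)$ in Theorem~\ref{thm:qceven} verbatim, and again subtracting collapses the formula to the stated expression depending only on $a_1$ (with the three cases $a_1=1$, $a_1=2$, $a_1\ge 3$) and $r$.

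There is no genuine obstacle here: the corollary is a bookkeeping consequence of the two main theorems, and the only point that requires attention is correctly absorbing the factor of $3$ coming from the even prime $p_1=2$ when dividing by $6$, which is what makes the even and odd leading terms look superficially different ($2^{a_1-2}$ versus $\tfrac16$). I would simply present the two short computations above, citing Theorems~\ref{thm:qceven}, \ref{thm:qcodd} and Jones' formula for $r(C_n)$.
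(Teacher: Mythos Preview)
Your proposal is correct and matches the paper's approach: the corollary is stated there without a separate proof, as it is exactly the observation you make---that $r(C_n)=\prod_{i=1}^r p_i^{a_i-1}(p_i+1)$, so $\tfrac{1}{6}r(C_n)$ agrees with the leading term of Theorems~\ref{thm:qceven} and~\ref{thm:qcodd} in the even and odd cases respectively.
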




We include here $QC(p^a)$ for later use.

\begin{corollary}\label{cor:primepower}
For any positive integer $a$ and any prime number $p$,
\begin{equation*}
QC(p^a)=\left\lbrace\begin{array}{lr}
 2^{a-2}+1 & p=2, a\geq 3\\
 2\cdot 3^{a-2} & p=3, a\geq 2\\
 \frac{1}{6}p^{a-1}(p+1) & p\equiv 5\modd 6\\
 \frac{1}{6}p^{a-1}(p+1)+\frac{2}{3} & p\equiv 1\modd 6.
\end{array}\right..
\end{equation*}
\end{corollary}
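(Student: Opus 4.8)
The plan is to derive Corollary \ref{cor:primepower} as a special case of Theorems \ref{thm:qceven} and \ref{thm:qcodd} by setting $r=1$ (so $n=p^a$ with a single prime), together with a direct computation of the small cases not covered by those theorems. First I would note that for $n=p^a$ the ``main term'' products $\prod_{i} p_i^{a_i-1}(p_i+1)$ collapse to the single factor $p^{a-1}(p+1)$, and the indexing sets for the case distinctions simplify drastically because there is no index $i$ with $2 \le i \le r$.

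For $p=2$: when $a \ge 3$, Theorem \ref{thm:qceven} applies (since $n=2^a \ge 8$), giving $QC(2^a) = 2^{a-2}(\text{empty product})-1 + 2^{r}$ with $r=1$, i.e. $2^{a-2}-1+2 = 2^{a-2}+1$, as claimed. The remaining cases $a=1,2$ (i.e. $n=2,4$) fall below the genus-two threshold or outside the hypotheses and are either vacuous or must be checked by hand against Harvey's Theorem and the signature-type formulas of \cite{ben}; I would verify these directly.

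For $p$ odd: when $p=3$ and $a \ge 2$, Theorem \ref{thm:qcodd} (with $r=1$, $p_1=3$, and the condition ``$p_i \equiv 1 \bmod 6$ for $2 \le i \le r$'' vacuously true) gives $\frac{1}{6}\cdot 3^{a-1}\cdot 4 - 1 + 2^{r-1} = \frac{2}{3}\cdot 3^{a-1} - 1 + 1 = 2\cdot 3^{a-2}$. When $p \equiv 5 \bmod 6$, the first branch (``$p_i \equiv 5 \bmod 6$ for some $i$'') applies, giving $\frac{1}{6}p^{a-1}(p+1) - 1 + 2^{r-1} = \frac{1}{6}p^{a-1}(p+1)$. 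When $p \equiv 1 \bmod 6$, the third branch applies, giving $\frac{1}{6}p^{a-1}(p+1) - 1 + \frac{5}{3}\cdot 2^{r-1} = \frac{1}{6}p^{a-1}(p+1) + \frac{2}{3}$. Finally the small odd cases $n=3$ (and $p=3$, $a=1$) lie outside $n \ge 5$ and must be handled separately: $C_3$ admits no quasiplatonic action on a surface of genus $\ge 2$ (the relevant signature $(0;3,3,3)$ forces genus $1$ by Riemann--Hurwitz), so $QC(3)=0$, which one checks is \emph{not} of the stated form — hence the corollary's implicit restriction to the cases where the formula is meaningful; I would state this caveat explicitly.

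The main obstacle is bookkeeping rather than conceptual: one must carefully confirm that the case-conditions in Theorems \ref{thm:qceven} and \ref{thm:qcodd}, which were phrased with $r \ge $ some value in mind, specialize correctly and unambiguously to $r=1$, in particular that exactly one branch of each piecewise formula is selected. A secondary subtlety is reconciling the corollary's four-way split (which omits $p=2,a\le 2$ and $p=3,a=1$) with the hypotheses $n\ge 8$ (even) and $n\ge 5$ (odd) in the parent theorems; I would either restrict the statement's range of validity or supply the omitted small-case computations via the signature enumeration in step (1)--(3) of the counting procedure described in the introduction.
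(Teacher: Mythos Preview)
Your proposal is correct and matches the paper's approach: the corollary is stated without proof in the paper, precisely because it follows by specializing Theorems \ref{thm:qceven} and \ref{thm:qcodd} to $r=1$, exactly as you outline. Your one unnecessary worry is about the small cases $p=2,a\le 2$ and $p=3,a=1$: the corollary's piecewise definition simply does not cover those ranges (note the explicit restrictions $a\ge 3$ and $a\ge 2$), so nothing needs to be checked or caveated there.
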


The above formulas can be framed in the following way. For $n=\prod_{i=1}^r p_i^{a_i}$ with $p_i$ distinct primes and $a_i$ positive integers,
\begin{equation*}
QC(n)=\frac{1}{6}n\prod_{p\mid n}\left(1+\frac{1}{p}\right)-1+a\cdot 2^r
\end{equation*}
where
\begin{equation*}
a=\left\lbrace\begin{array}{lr}
 \frac{1}{4} & p_1=2, a_1=1\\
 & \\
 \frac{1}{2} & p_1=2, a_1=2\\
 & \\
 1 & p_1=2, a_1\geq 3\\
  & \\
 \frac{5}{6} & p_i\equiv 1\modd 6 \text{ for } 1\leq i\leq r\\
  & \\
 \frac{2}{3} & p_1=3, a_1=1, p_i\equiv 1\modd 6 \text{ for } 2\leq i\leq r\\
 & \\
 \frac{1}{2} & p_1=3, a_1\geq 2, p_i\equiv 1\modd 6 \text{ for } 2\leq i\leq r\\
 & \\
 \frac{1}{2} & p_i\equiv 5\modd 6 \text{ for some } 1\leq i\leq r
\end{array}\right..
\end{equation*}

In other words, the number of distinct quasiplatonic topological actions of $C_n$ on surfaces (without necessarily fixing the genus nor the signature of the action) is, up to a constant, one-sixth the number of regular dessins with $C_n$ as their automorphism group.

\section{Preliminaries}\label{sec:prelim}

This section summarizes important facts about Riemann surfaces, quasiplatonic surfaces, topological group actions, and dessins d'enfants. We will also describe formulas from Benim and Wootton found in \cite{ben}. 

By a \emph{surface} we mean a real, connected 2-manifold. That is, a surface is a Hausdorff topological space with a countable basis, locally homeomorphic to $\R^2$. We will assume throughout the paper that our surfaces are connected, compact, and orientable. Moreover, a \emph{Riemann surface} is a surface with a complex structure, i.e., transition functions on the surface are holomorphic functions on the complex plane, $\C$. We will also use the classification of compact Riemann surfaces, called the Uniformization Theorem \cite{farkas}: all Riemann surfaces have as universal covering space either the sphere, the complex plane, or the open unit disc in the complex plane, depending on whether the genus of the surface is zero, one, or at least two. Since the open unit disc and the upper-half plane are conformally equivalent, we will use either model for the hyperbolic plane when discussing higher genus surfaces.

\subsection{Topological Group Actions}

Let $G$ be a finite group. A \emph{topological group action} of $G$ on a surface $X$ is an injective group homomorphism
\begin{equation*}
\epsilon:G\xhookrightarrow{}\text{Homeo}^+(X)
\end{equation*}
into the group of orientation-preserving self-homeomorphisms $\text{Homeo}^+(X)$ of $X$. We may also refer to $G$ acting on $X$ as a \emph{topological $G$-action}. There is the associated group action
\begin{align*}
\epsilon_G:G\times X&\longrightarrow X\\
(g,x)&\longmapsto\epsilon(g)(x)
\end{align*}
Another action $\epsilon':G'\xhookrightarrow{}\homeo^+(Y)$ will be equivalent to $\epsilon$ if there exists a group isomorphism $\omega:G\rightarrow G'$ and a homeomorphism $h:X\rightarrow Y$ such that the following diagram commutes:
\begin{equation*}
\begin{tikzcd}
 G\times X \arrow[r, "\epsilon_G"] \arrow[d, "\omega", shift right=2.5ex] \arrow[d, "h", shift left=2.5ex] & X \arrow[d, "h"]\\
 G'\times Y \arrow[r, "\epsilon_{G'}"] &Y
\end{tikzcd}
\end{equation*}
That is, for any $g\in G, x\in X$,
\begin{equation*}
h\circ\epsilon(g)\circ h^{-1}(x)=\epsilon'(w(g))(x).
\end{equation*}
If $X$ is a Riemann surface, we say $G$ \emph{acts conformally on $X$} if there is an injective group homomorphism $G\xhookrightarrow{}\aut(X)$. Here, $\aut(X)$ denotes the group of conformal self-maps on $X$ with group operation given by composition of functions.

\begin{example}
Consider the three-holed compact surface $X$ in Figure \ref{fig:threehole}.
\begin{figure}[ht]
\includegraphics[page=2,scale=.35]{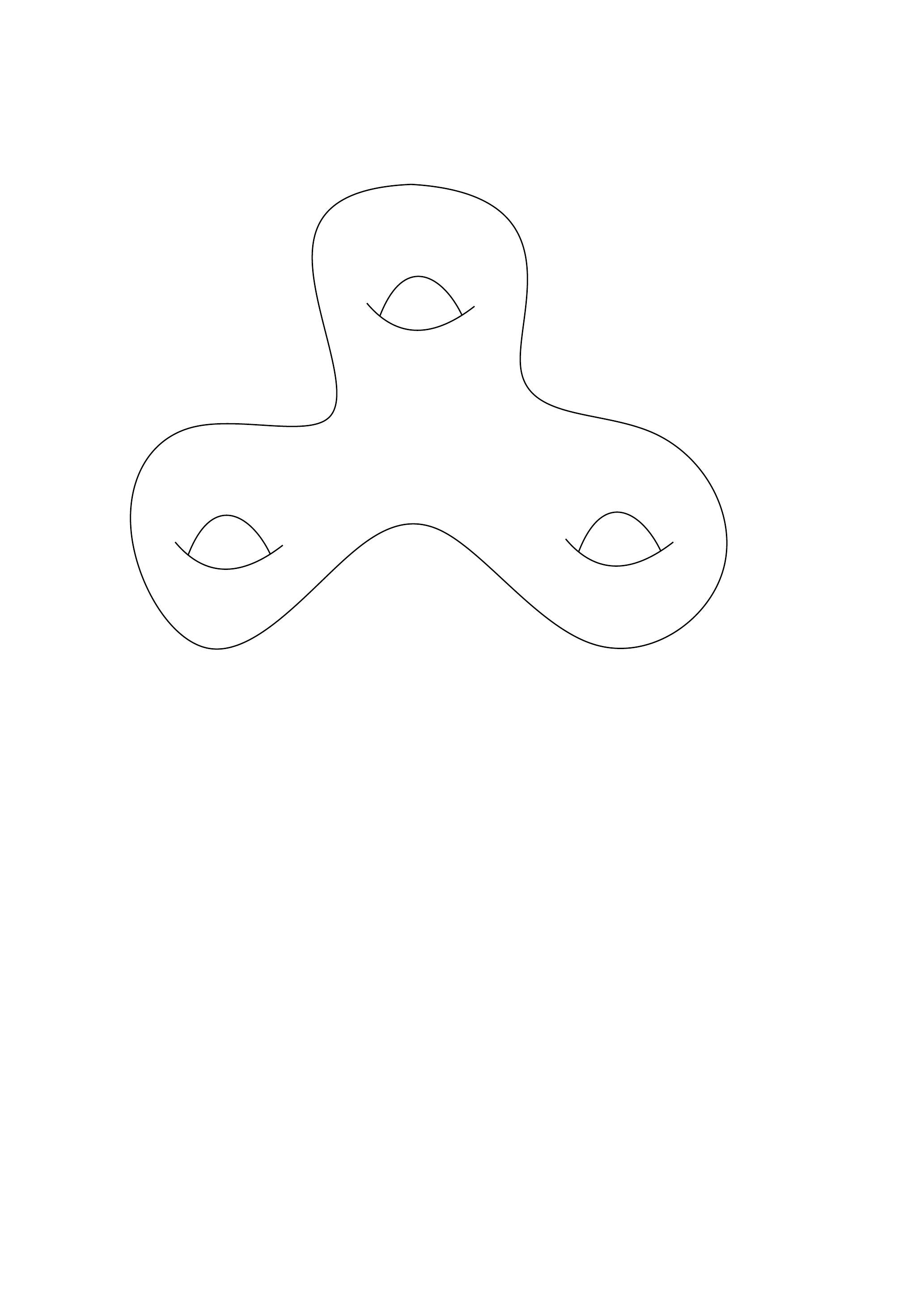}
\caption{Three-holed surface with $C_3$ rotational symmetry.}\label{fig:threehole}
\end{figure}
$X$ has three-fold rotational symmetry about the center vertical axis, i.e., there is a group action $\epsilon:C_3\xhookrightarrow{}\homeo^+(X)$. Every point on $X$, except for the two fixed points intercepting the vertical axis, has an orbit size of three under the action of $C_3$. Each hole of $X$ is mapped to each other. The resulting orbit space $X/C_3$ is a genus one surface with two branch points, see Figure \ref{fig:orbitspace} below.
\begin{figure}[ht]
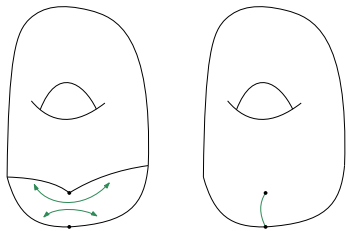

\includegraphics[page=3,scale=.4]{gradtalk1.pdf}\qquad \includegraphics[page=4,scale=.4]{gradtalk1.pdf}
\caption{The orbit space $X/C_3$ is a one-holed torus with two branch points. The left-hand figure shows the process of creating the orbit space, and the right-hand figure is the final surface.}\label{fig:orbitspace}
\end{figure}

\end{example}

\begin{example}
For a Riemann surface $X$ of genus $\sigma\geq 2$, it is a famous fact of Hurwitz that $|\aut(X)|\leq 84(\sigma-1)$. Among all genus three compact Riemann surfaces, there is a unique surface with the largest number of automorphisms, called the Klein quartic curve. It admits a $C_7$ conformal action, as well as a $\text{PSL}_2(7)$ conformal action.
\end{example}

Two topological $G$-actions on the same surface $X$ are equivalent if their images in $\homeo^+(X)$ are conjugate. The next section will describe why there are only finitely many $G$-actions on surfaces.

\subsection{Surface Kernel Epimorphisms and Generating Vectors}

One way to understand topological actions of a finite group $G$ on a surface is by realizing $G$ as a quotient group of automorphisms (i.e., orientation-preserving isometries) of the hyperbolic plane $\HH$ by a certain torsion-free, normal subgroup. 

A \emph{Fuchsian group} is a discrete subgroup of $\text{PSL}_2(\R)\cong\aut(\HH)$. Here, $\text{PSL}_2(\R)$ is given the subspace topology, being a subset of the two-by-two matrices with real entries with its usual topology. The Uniformization Theorem characterizes all Riemann surfaces of genus at least two as quotients of $\HH$ by the action of a torsion-free Fuchsian group. 

The following theorem gives a geometric interpretation of a topological $G$-action. The statement can be found in \cite{harvey}.
\begin{theorem}\label{thm:topactionfuchs}
A finite group $G$ acts on a (compact) surface $X$ of genus $\sigma\geq 2$ if and only if the $G$-action is topologically equivalent to the action of $\Delta/\Gamma$ on $X$, where $X\cong\HH/\Gamma$ and $\Gamma$ is a torsion-free Fuchsian group normally contained in some Fuchsian group $\Delta$ with $\Delta/\Gamma\cong G$.
\end{theorem}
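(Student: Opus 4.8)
The plan is to prove the two implications separately; the reverse implication is a short direct construction, while the forward implication rests on realizing the topological action by isometries of $\HH$ and then lifting them.

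For the ``if'' direction I would argue as follows. Given $X\cong\HH/\Gamma$ compact of genus $\sigma\geq 2$, with $\Gamma$ torsion-free and normal in a Fuchsian group $\Delta$ with $\Delta/\Gamma\cong G$, the group $\Delta$ acts on $\HH$ by orientation-preserving isometries, and normality of $\Gamma$ means $\Delta/\Gamma$ acts on the quotient $\HH/\Gamma=X$ through orientation-preserving homeomorphisms (indeed conformal automorphisms). The one point to check is faithfulness: if $\delta\Gamma$ acts trivially on $X$, then for every $z\in\HH$ there is $\gamma_z\in\Gamma$ with $\delta z=\gamma_z z$, and discreteness of $\Gamma$ together with connectedness of $\HH$ forces $\gamma_z$ to be locally constant, hence constant, so $\delta\in\Gamma$. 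Thus $G\cong\Delta/\Gamma\hookrightarrow\homeo^+(X)$ is a topological $G$-action on $X$, which is the assertion in this direction.

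For the ``only if'' direction, suppose $\epsilon:G\hookrightarrow\homeo^+(X)$ is given. First I would replace $\epsilon$ by a topologically equivalent action by isometries: by the Nielsen realization theorem for finite groups there is a hyperbolic metric on $X$ (equivalently, a complex structure) with respect to which $G$ acts by orientation-preserving isometries. Writing $X=\HH/\Gamma$ for the torsion-free Fuchsian group $\Gamma$ deck-transforming the universal cover $\HH\to X$ (which exists by the Uniformization Theorem since $\sigma\geq 2$), each isometry $\epsilon(g)$ of $X$ lifts to an isometry of $\HH$, with two lifts differing by an element of $\Gamma$. I would then set
\begin{equation*}
\Delta:=\{\,\widetilde{g}\in\aut(\HH):\widetilde{g}\text{ is a lift of }\epsilon(g)\text{ for some }g\in G\,\},
\end{equation*}
and check: $\Delta$ is a subgroup of $\aut(\HH)$ containing $\Gamma$ as a normal subgroup, with $\Delta/\Gamma\cong G$; and $\Delta=\bigsqcup_{i=1}^{|G|}\gamma_i\Gamma$ is a finite union of cosets of the discrete set $\Gamma$, hence discrete, i.e.\ a Fuchsian group. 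By construction the action of $\Delta/\Gamma$ on $\HH/\Gamma=X$ is exactly $\epsilon$, so in particular it is topologically equivalent to $\epsilon$.

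The main obstacle is the very first move of the ``only if'' direction: passing from an arbitrary orientation-preserving homeomorphism action of the finite group $G$ to a topologically conjugate isometric one. For cyclic $G$ --- the case ultimately needed in this paper --- this is the classical result of Nielsen, and for general finite $G$ it is also due to Nielsen and Fenchel, so I would invoke it as a black box rather than reprove it. An alternative that sidesteps Nielsen realization is to build $\Delta$ abstractly as the Fuchsian group whose signature is that of the quotient orbifold $X/G$ --- such a group exists precisely because the Riemann-Hurwitz formula, combined with $\sigma\geq 2$, makes the orbifold Euler characteristic negative --- and to take $\Gamma$ as the kernel of the monodromy epimorphism $\Delta\twoheadrightarrow G$ of the branched cover $X\to X/G$, after checking that $\Gamma$ is torsion-free (its elliptic candidates are conjugates of powers of the cone-point generators, none of which lies in the kernel) and that $\HH/\Gamma$ has genus $\sigma$ by Riemann-Hurwitz; but this route still owes a proof that the resulting isometric action agrees with $\epsilon$ up to topological equivalence, which is essentially the same difficulty repackaged. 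Beyond this, everything is routine bookkeeping with covering spaces and the Riemann-Hurwitz formula.
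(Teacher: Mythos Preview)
The paper does not prove this theorem at all; it simply states it as a known result and attributes it to Harvey \cite{harvey}. So there is no ``paper's own proof'' to compare against.

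Your sketch is the standard argument and is essentially correct. The ``if'' direction is routine, and your faithfulness check is the right one. For the ``only if'' direction you correctly identify that the real content is Nielsen realization: replacing an arbitrary topological action of a finite group by a topologically equivalent isometric (conformal) one. One small historical correction: for general finite $G$ the full proof of Nielsen realization is due to Kerckhoff (1983), not Nielsen and Fenchel; the earlier work covered cyclic and solvable cases, which, as you note, is all that is strictly needed for this paper. Your alternative route via the orbifold signature of $X/G$ and the monodromy epimorphism is also standard and is closer in spirit to how Harvey's paper actually presents things; you are right that verifying topological equivalence with the original $\epsilon$ still ultimately appeals to the same realization result (or to the classification of surface-group extensions up to homeomorphism). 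Either way, there is no gap in your proposal beyond the black-boxing you already flag.
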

Here, $\Gamma\cong\pi_1(X)$, and so $\Gamma$ is referred to as a \emph{surface group}. The overgroup $\Delta$ in the previous theorem is thus determined by the $G$-action. Let $g$ be the genus of the orbit space $X/G$. An application of the Siefert Van-Kampen Theorem gives a presentation for $\Delta$ as
\begin{equation}\label{eq:presentation}
\Delta\cong\left\langle a_1,b_1,\ldots,a_g,b_g, c_1,\ldots,c_k\mid c_1^{n_1}=\cdots=c_k^{n_k}=1, \prod_{i=1}^g[a_i,b_i]\cdot\prod_{i=1}^kc_i=1\right\rangle,
\end{equation}
where $[x,y]:=xyx^{-1}y^{-1}$ denotes the commutator. The Fuchsian group $\Delta$ is completely described by the integers $g$ and $n_1,\ldots, n_k$. Thus, we say that $\Delta$ has \emph{signature} $(g;n_1,\ldots,n_k)$ and that $X$ has signature $(n_1,\ldots,n_k)$. When $g=0$ and $k=3$, $\Delta$ is a \emph{triangle group}, written as
\begin{equation*}
\Delta(n_1,n_2,n_3)=\langle x,y,z\mid x^{n_1}=y^{n_2}=z^{n_3}=1,xyz=1\rangle.
\end{equation*}

By Theorem \ref{thm:topactionfuchs}, a topological $G$-action gives rise to a surjective group homomorphism $\phii:\Delta\twoheadrightarrow G$ with a presentation for $\Delta$ given as above and $\ker\phii\cong\pi_1(X)$ torsion-free. These epimorphisms are of great interest, and are called \emph{surface kernel epimorphisms}. In the special case when $g=0$ and $k=3$, we call such a $G$-action a \emph{quasiplatonic topological action}.

The map $X\rightarrow X/G$ from a surface $X$ of genus $\sigma$ is a ramified covering of the orbit space $X/G$ of genus $g$. Except for a finite number of points on $X$, i.e., the branch points of the $G$-action, the orbits under the $G$-action are all of size $|G|$. By considering the number of vertices, edges, and faces of a triangulation whose vertices include all the branch points on $X$, one can derive the Riemann-Hurwitz formula (e.g., Proposition 1.2, \cite{jones}).
\begin{theorem}[Riemann-Hurwitz Formula]
Let $f:X\rightarrow X/G$ be the projection map associated to a topological $G$-action on $X$. Write $X\cong\HH/\Gamma$ as in Theorem \ref{thm:topactionfuchs}, with $\Gamma\lhd\Delta$ and $\Delta$ having presentation as in \eqref{eq:presentation}. Then
\begin{equation*}
2\sigma-2=|G|(2g-2)+|G|\cdot\sum_{i=1}^k\left(1-\frac{1}{n_i}\right).
\end{equation*}
\end{theorem}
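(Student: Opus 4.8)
The plan is to compute the Euler characteristic $\chi(X)=2-2\sigma$ by transporting a triangulation of the orbit surface $X/G$ upward along $f$ and carefully bookkeeping how simplices are covered, with the ramified fibers contributing the only nontrivial correction. First I would fix a simplicial triangulation $T$ of $X/G$ that is fine enough to be a genuine triangulation and, crucially, so that each of the $k$ branch values of $f$ — the images of the cone points of $\Delta$, carrying periods $n_1,\dots,n_k$ — occurs as a vertex of $T$. Writing $V,E,F$ for the numbers of vertices, edges, and faces of $T$, we have $V-E+F=\chi(X/G)=2-2g$.

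Next I would pull $T$ back through $f$. Away from the finitely many branch points, $f$ restricts to a regular $|G|$-sheeted covering map (the one classified by the surface group $\Gamma\lhd\Delta$), so after refining $T$ if necessary we may assume every open edge and every open face is evenly covered; then the preimages of the edges and faces of $T$ form a triangulation $\widetilde T$ of $X$ with $\widetilde E=|G|\,E$ and $\widetilde F=|G|\,F$. The vertices are where ramification enters: a vertex of $T$ that is not a branch value has exactly $|G|$ preimages, while the fiber over the branch value carrying period $n_i$ is a single $G$-orbit whose point-stabilizers are cyclic of order exactly $n_i$, hence has $|G|/n_i$ points. Thus, writing $V-k$ for the number of non-branch vertices of $T$,
\[
\widetilde V=|G|(V-k)+\sum_{i=1}^k\frac{|G|}{n_i}.
\]

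Assembling these, I would compute
\begin{align*}
\chi(X)&=\widetilde V-\widetilde E+\widetilde F=|G|\bigl(V-E+F\bigr)-|G|k+\sum_{i=1}^k\frac{|G|}{n_i}\\
&=|G|\,\chi(X/G)-|G|\sum_{i=1}^k\left(1-\frac{1}{n_i}\right),
\end{align*}
and then substitute $\chi(X)=2-2\sigma$ and $\chi(X/G)=2-2g$ and rearrange to obtain $2\sigma-2=|G|(2g-2)+|G|\sum_{i=1}^k(1-1/n_i)$.

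The one genuinely substantive point — the main obstacle — is justifying the count of the ramified fibers, namely that $G$ acts transitively on $f^{-1}(q_i)$ with stabilizers cyclic of order exactly $n_i$. This is where Theorem~\ref{thm:topactionfuchs} and the presentation \eqref{eq:presentation} do the work: a point of $\HH$ lying over the $i$-th cone point of $\Delta/\Gamma$ has stabilizer the cyclic subgroup $\langle c_i\rangle$ of order $n_i$ in $\Delta$, and under the surface kernel epimorphism $\phi\colon\Delta\twoheadrightarrow G$ this subgroup maps isomorphically onto the stabilizer in $G$ because $\ker\phi=\Gamma$ is torsion-free, so $\phi(c_i)$ still has order $n_i$; transitivity of $G$ on the fiber reflects that $\Delta$ permutes the $\Gamma$-sheets over that point. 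Everything else is routine: the existence of a fine enough base triangulation together with its lift, and elementary manipulation of Euler characteristics.
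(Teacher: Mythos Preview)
Your proof is correct and follows precisely the approach the paper indicates: the paper does not give a full proof but merely sketches that ``by considering the number of vertices, edges, and faces of a triangulation whose vertices include all the branch points on $X$, one can derive the Riemann-Hurwitz formula,'' deferring to \cite{jones} for details. Your argument is exactly this Euler-characteristic count via a lifted triangulation, and your treatment of the one nontrivial point---that the fiber over the $i$-th branch value has size $|G|/n_i$ because $\Gamma$ is torsion-free and hence $\phi(c_i)$ retains order $n_i$---is the right justification.
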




The notion of equivalent topological $G$-actions on $X$ translates to equivalences between the associated surface kernel epimorphisms in the following way (for proofs, see the discussion after Proposition 2.1 in \cite{broughton}, or Theorem 4 in \cite{lloyd1972}). The result for quasiplatonic actions is found as Theorem 2.3 in \cite{woottextend}. For the general case of a non-quasiplatonic topological action, see \cite{broughton}.

\begin{theorem}[Wootton, \cite{woottextend}]\label{thm:kernelepiequiv}
Two quasiplatonic topological $G$-actions $\epsilon$ and $\epsilon'$ on $X$ are equivalent if and only if there exists $\omega\in\aut(G)$ and $\gamma\in\aut(\Delta)$ such that the associated surface kernel epimorphisms $\phii$ and $\phii'$ of $\epsilon$ and $\epsilon'$, respectively, satisfy
\begin{equation*}
\varphi'=\omega\circ\phii\circ\gamma^{-1}.
\end{equation*}
\end{theorem}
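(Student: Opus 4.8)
The plan is to turn the informal correspondence $\varphi\leftrightarrow\epsilon$ into a precise dictionary and then verify both implications by covering-space bookkeeping, isolating the one piece of genuine content: that every automorphism of the Fuchsian triangle group $\Delta$ is realized by conjugation by a self-homeomorphism of $\HH$ that normalizes $\Delta$. First I would normalize the setup. Fix once and for all a realization of $\Delta=\Delta(n_1,n_2,n_3)$ as a Fuchsian subgroup of $\text{PSL}_2(\R)$; this loses nothing, since triangle groups are rigid (the Teichm\"{u}ller space of the orbifold $\HH/\Delta$ is a point), so any two Fuchsian realizations are conjugate in $\text{PSL}_2(\R)$. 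To a surface kernel epimorphism $\varphi\colon\Delta\twoheadrightarrow G$ attach $\Gamma_\varphi=\ker\varphi$, the surface $X_\varphi=\HH/\Gamma_\varphi$, and the action $\epsilon_\varphi$ sending $g$ to the homeomorphism $\Gamma_\varphi z\mapsto\Gamma_\varphi\delta z$ of $X_\varphi$ for any $\delta\in\varphi^{-1}(g)$ (well defined as $\Gamma_\varphi\lhd\Delta$). By Theorem~\ref{thm:topactionfuchs} every quasiplatonic $G$-action is equivalent to some $\epsilon_\varphi$; and since an equivalence of actions descends to a homeomorphism of the orbit-space orbifolds preserving the orders of the cone points (these being the orders of the point stabilizers), equivalent actions share the signature $(n_1,n_2,n_3)$. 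Hence it suffices to prove that $\epsilon_\varphi$ and $\epsilon_{\varphi'}$ are equivalent if and only if $\varphi'=\omega\circ\varphi\circ\gamma^{-1}$ for some $\omega\in\aut(G)$ and $\gamma\in\aut(\Delta)$.

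For the ``if'' direction I would argue as follows. From $\varphi'=\omega\circ\varphi\circ\gamma^{-1}$ one checks directly that $\Gamma_{\varphi'}=\gamma(\Gamma_\varphi)$. Using the realization statement below, choose a homeomorphism $f\colon\HH\to\HH$ with $f\delta f^{-1}=\gamma(\delta)$ for all $\delta\in\Delta$; then $f$ carries $\Gamma_\varphi$-orbits to $\gamma(\Gamma_\varphi)=\Gamma_{\varphi'}$-orbits, so it descends to a homeomorphism $h\colon X_\varphi\to X_{\varphi'}$. A short chase of the quotient maps shows, for $\delta\in\varphi^{-1}(g)$, that $h\circ\epsilon_\varphi(g)\circ h^{-1}$ is the map $\Gamma_{\varphi'}y\mapsto\Gamma_{\varphi'}(f\delta f^{-1})y=\Gamma_{\varphi'}\gamma(\delta)y$, while $\varphi'(\gamma(\delta))=\omega(\varphi(\delta))=\omega(g)$; hence $h\circ\epsilon_\varphi(g)\circ h^{-1}=\epsilon_{\varphi'}(\omega(g))$, so $(h,\omega)$ is an equivalence.

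For the ``only if'' direction, given an equivalence $(h,\omega)$ with $h\colon X_\varphi\to X_{\varphi'}$, I would lift $h$ through the universal coverings to $\tilde h\colon\HH\to\HH$, so that $\tilde h\Gamma_\varphi\tilde h^{-1}=\Gamma_{\varphi'}$. The crux here is that $\tilde h$ normalizes $\Delta$: for $\delta\in\Delta$, chasing the covering maps identifies $\tilde h\delta\tilde h^{-1}$ as a lift of the homeomorphism $\epsilon_{\varphi'}(\omega(\varphi(\delta)))$ of $X_{\varphi'}$; any two lifts of that homeomorphism differ by an element of $\Gamma_{\varphi'}\lhd\Delta$, and one of them lies in $(\varphi')^{-1}(\omega(\varphi(\delta)))\subseteq\Delta$, so $\tilde h\delta\tilde h^{-1}\in\Delta$. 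Applying the same to $(h^{-1},\omega^{-1})$ yields $\tilde h\Delta\tilde h^{-1}=\Delta$. Taking $\gamma\in\aut(\Delta)$ to be conjugation by $\tilde h$, the same computation reads $\varphi'(\gamma(\delta))=\omega(\varphi(\delta))$ for all $\delta$, i.e.\ $\varphi'\circ\gamma=\omega\circ\varphi$, which is the asserted identity. (A different choice of lift $\tilde h$ changes $\gamma$ only by an inner automorphism, which is harmless.)

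The step I expect to be the real obstacle is the geometric realization used above: \emph{every $\gamma\in\aut(\Delta)$ is conjugation by some self-homeomorphism of $\HH$ normalizing $\Delta$}. This is the Dehn--Nielsen--Baer / Nielsen-realization phenomenon for the orbifold $\HH/\Delta$ (a sphere with three cone points); I would either cite it (it is classical; cf.\ \cite{zieschang}) or prove it by hand for triangle groups, since $\operatorname{Out}(\Delta(n_1,n_2,n_3))$ is generated by the automorphisms permuting cone points of equal order together with the ``reflection'' automorphism coming from the index-two extension to the full triangle group, and each generator is visibly induced by an isometry of $\HH$ normalizing $\Delta$ (a symmetry of the triangle, respectively a reflection). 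Orientation causes no trouble here, because the notion of equivalence used in this paper permits orientation-reversing homeomorphisms, consistent with the appearance of the full group $\aut(\Delta)$ -- rather than only its orientation-preserving index-two subgroup -- in the statement. Everything outside this realization step is routine manipulation of covering maps.
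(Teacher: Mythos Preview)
The paper does not supply its own proof of this statement: it is quoted as Theorem~2.3 of \cite{woottextend}, with the general (non-quasiplatonic) case deferred to \cite{broughton} and \cite{lloyd1972}. So there is nothing to compare against except a citation.

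Your argument is sound and is essentially the standard one. You correctly isolate the only nontrivial ingredient, namely that every automorphism of the triangle group $\Delta(n_1,n_2,n_3)$ is induced by a self-homeomorphism of $\HH$ normalizing $\Delta$; your proposed direct verification via the explicit generators of $\operatorname{Out}(\Delta)$ (permutations of equal-order cone points and the reflection coming from the extended triangle group) is exactly how one avoids invoking the full Nielsen realization machinery, and it works cleanly here because triangle groups are rigid. Your observation that the paper's definition of equivalence allows $h$ to be orientation-reversing, and that this is precisely what matches the appearance of all of $\aut(\Delta)$ rather than its index-two orientation-preserving subgroup, is a point worth making explicit, since some treatments in the literature restrict to $\homeo^+$ in the equivalence as well and correspondingly obtain only half of $\aut(\Delta)$. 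The covering-space bookkeeping in both directions is routine and you have it right; the one cosmetic point is that in the ``only if'' direction you might say a word about why the lift $\tilde h$ exists (contractibility of $\HH$), but this is standard.
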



Thus, the problem of counting topological $G$-actions up to equivalence is reduced to counting surface kernel epimorphisms of $G$. We can further describe the surface kernel epimorphisms by working only with $G$ itself, via generating vectors for $G$. A \emph{$(g;n_1,n_2,\ldots,n_k)$-generating vector} for $G$ is a tuple 
\begin{equation*}
(\alpha_1,\beta_1,\ldots,\alpha_g,\beta_g,\eta_1,\ldots,\eta_k)
\end{equation*}
of elements in $G$ that satisfy the following:
\begin{enumerate}
\item $\displaystyle\prod_{i=1}^g[\alpha_i,\beta_i]\cdot\prod_{i=1}^k\eta_i=1$;
\item $|\eta_i|=n_i$, where $|\cdot|$ denotes order;
\item $G=\langle\alpha_1,\beta_1,\ldots,\alpha_g,\beta_g,\eta_1,\ldots,\eta_k\rangle$.
\end{enumerate}

Suppose $G$ acts on $X$ with associated surface kernel epimorphism $\phii:\Delta\rightarrow G$. Then setting
\begin{align*}
\alpha_i&=\phii(a_i)\\
\beta_i&=\phii(b_i)\\
\eta_i&=\phii(c_i),
\end{align*}
yields a $(g;n_1,\ldots,n_k)$-generating vector for $G$. Conversely, if we begin with a $(g;n_1,\ldots,n_k)$-generating vector for $G$, we can build the Fuchsian group $\Delta$ so that $\phii:\Delta\twoheadrightarrow G$ is a surface kernel epimorphism. Equivalences among the surface kernel epimorphisms also translates to equivalences between generating vectors: we say two generating vectors $\nu$ and $\nu'$ for $G$ are equivalent if their associated surface kernel epimorphisms are equivalent in the sense of Theorem \ref{thm:kernelepiequiv}. Therefore, understanding distinct topological $G$-actions on a surface reduces to working exclusively with generating vectors for $G$.





If $G\cong\Delta/\Gamma$ acts topologically on $X$ where $\Delta$ is some hyperbolic triangle group $\Delta(n_1,n_2,n_3)$, then the action of $G$ on $X$ can also be described combinatorially by a bipartite map on the surface of $X$. These maps are described in the next sections.

\subsection{Dessins d'Enfants}

A \emph{dessin d'enfant} (or \emph{dessin} for short) is a pair $(X,D)$, where $X$ is a compact, connected surface and $D\subset X$ is a finite map such that
\begin{enumerate}
\item $D$ is connected.
\item $D$ is bicolored (i.e., bipartite).
\item $X\setminus D$ is the union of finitely many topological discs, called the \emph{faces} of $D$.
\end{enumerate}
We denote the two colors of the vertices of a dessin as white and black. Two dessins $(X,D)$ and $(Y,D')$ will be equivalent if there exists an orientation-preserving homeomorphism $f:X\rightarrow Y$ whose restriction $f\mid_D$ is isomorphic to $D'$ as bicolored graphs. See \cite{gir}, \cite{jones}, or \cite{wolf} for a thorough treatment of dessins d'enfants.



Equivalence between dessins can also be seen combinatorially by the use of permutations describing their symmetries. Because a dessin $D$ lies on a Riemann surface $X$ which is orientable, there is an induced ordering on the edges around the white and black vertices, and faces of $D$. Label the edges of the dessin $1,\ldots,N$. We will use the clockwise orientation on surfaces throughout the paper. Let $\sigma_0$ and $\sigma_1$ be the permutations in $S_N$, the permutation group on $N$ letters, describing the order of numbered edges about the white and black vertices of $D$, respectively. The pair $(\sigma_0,\sigma_1)$ is called the \emph{permutation representation pair} of $D$. The group $\langle\sigma_0,\sigma_1\rangle$ is a transitive permutation subgroup of $S_N$, and is called the \emph{monodromy group} $\mon(D)$ of $D$. The monodromy group is unique up to conjugation in $S_N$, due to renumbering of the edges of $D$.

The full set of symmetries of a dessin $D$ on $X$ is its \emph{automorphism group}, $\aut(D)$, defined as the group of orientation-preserving homeomorphisms $f:X\rightarrow X$ which permutes the edges of $D$ while preserving the cyclic order of edges around each vertex. Equivalently, the induced permutation on edges arising from the function $f$ commutes with $\sigma_0$ and $\sigma_1$. This means the automorphism group is the centralizer in $S_N$ of the monodromy group (Theorem 4.40, \cite{gir}):
\begin{equation*}
\aut(D)\cong Z(\mon(D)):=\lbrace \tau\in S_N:\tau\sigma=\sigma\tau, \sigma\in\mon(D)\rbrace.
\end{equation*}
A \emph{regular dessin} is a dessin $D$ such that $\aut(D)$ acts transitively on the edge set of $D$. In particular, $\aut(D)\cong\mon(D)$. Moreover, it turns out that $\aut(D)\cong\Delta(n_1,n_2,n_3)/\Gamma$ for some hyperbolic triangle group $\Delta(n_1,n_2,n_3)$ (e.g., \cite{jones1}). In fact, $n_1=|\sigma_0|, n_2=|\sigma_1|$ and $n_3=|\sigma_0\sigma_1|$, which are the orders of the white, black, and faces, respectively. We call $(n_1,n_2,n_3)$ the \emph{type} of the dessin.

\subsection{Regular Dessins and Surface Kernel Epimorphisms}

The combinatorial structure of a dessin is connected to the language of surface kernel epimorphisms and generating vectors of its automorphism group. Let $D$ be a regular dessin on $X\cong\HH/\Gamma$. Suppose $G:=\aut(D)$ is given as a quotient group $\Delta(n_1,n_2,n_3)/\Gamma$ for some hyperbolic triangle group $\Delta(n_1,n_2,n_3)$. Then $G\leq \aut(X)$ acts conformally on $X$. Moreover, there is the associated surface kernel epimorphism $\phii$ defined on the generators of $\Delta(n_1,n_2,n_3)$ as
\begin{align*}
\phii:\Delta(n_1,n_2,n_3)&\longrightarrow G\\
x&\longmapsto\sigma_0\\
y&\longmapsto\sigma_1\\
z&\longmapsto(\sigma_0\sigma_1)^{-1}.
\end{align*}
In this way, a combinatorial object on a compact Riemann surface of genus at least two encodes geometric information in the form of a generating vector for its symmetry group.





\subsection{Quasiplatonic Surfaces}

If $X$ has genus $\sigma\geq 2$, then $\aut(X)$ is a finite group of size at most $84(\sigma-1)$, by Hurwitz's Automorphism Theorem. The Uniformization Theorem then gives $X\cong\HH/\Gamma$ for a Fuchsian group $\Gamma$. We say $X$ is \emph{quasiplatonic} if $\Gamma\lhd\Delta(n_1,n_2,n_3)$ for some hyperbolic triangle group $\Delta(n_1,n_2,n_3)$. See Figure \ref{fig:quasiplatonicgenus3} for an example of a quasiplatonic surface.

The following theorem gives various ways to describe a quasiplatonic surface. We use $N(\cdot)$ to denote the normalizer in $\text{PSL}_2(\R)$.

\begin{theorem}[Jones and Wolfart \cite{jones}, Girondo and Gonz{\'a}lez-Diez \cite{gir}]
Let $X\cong\HH/\Gamma$ be a compact Riemann surface of genus at least two with surface group $\Gamma$. The following are equivalent.
\begin{enumerate}
\item $X$ is quasiplatonic.
\item $X$ admits a regular dessin d'enfant.
\item $X$ admits a Bely{\u\i} function $\beta:X\rightarrow\widehat{\C}$ which is a regular (ramified) covering branching over three values.
\item $N(\Gamma)$ is a triangle group.
\end{enumerate}
\end{theorem}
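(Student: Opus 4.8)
The plan is to establish the four conditions equivalent by the cycle $(1)\Rightarrow(3)\Rightarrow(2)\Rightarrow(1)$ together with the separate loop $(1)\Leftrightarrow(4)$. The first cycle is essentially formal covering-space bookkeeping, and I expect the implication $(1)\Rightarrow(4)$ to be where the structure theory of Fuchsian groups genuinely enters.

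For $(1)\Rightarrow(3)$: given $\Gamma\lhd\Delta(n_1,n_2,n_3)$, the quotient map $\HH/\Gamma\to\HH/\Delta(n_1,n_2,n_3)$ is a regular covering of orbifolds of degree $|\Delta(n_1,n_2,n_3)/\Gamma|$, unramified away from the three cone points. The underlying Riemann surface of $\HH/\Delta(n_1,n_2,n_3)$ has genus $0$ (a triangle-group quotient is a sphere), so composing with a Möbius transformation carrying the three cone points to $0,1,\infty$ yields a holomorphic $\beta\colon X\to\widehat{\C}$ that is a regular covering branched over $\{0,1,\infty\}$, which is condition $(3)$. For $(3)\Rightarrow(2)$: set $D:=\beta^{-1}([0,1])$, bicolored by the two endpoints of the arc; then $X\setminus D$ is the preimage of the simply connected region $\widehat{\C}\setminus[0,1]$, hence a disjoint union of discs, $D$ is connected, and regularity of $\beta$ makes $\aut(D)$ act transitively on edges, so $(X,D)$ is a regular dessin. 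For $(2)\Rightarrow(1)$: as recorded in Section \ref{sec:prelim}, a regular dessin $D$ on $X$ has $\aut(D)\cong\Delta(n_1,n_2,n_3)/\Gamma'$ for a hyperbolic triangle group $\Delta(n_1,n_2,n_3)$ with $\Gamma'\cong\pi_1(X)$; since $X\cong\HH/\Gamma$ we may take $\Gamma'=\Gamma$, giving $\Gamma\lhd\Delta(n_1,n_2,n_3)$.

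For $(4)\Rightarrow(1)$ take $\Delta=N(\Gamma)$, which is immediate. The substantive step is $(1)\Rightarrow(4)$. From $\Gamma\lhd\Delta(n_1,n_2,n_3)$ we get $\Delta(n_1,n_2,n_3)\le N(\Gamma)$, and by a theorem of Greenberg, $N(\Gamma)$ is again a Fuchsian group; since it contains the finite-coarea group $\Delta(n_1,n_2,n_3)$, it has finite coarea and $d:=[N(\Gamma):\Delta(n_1,n_2,n_3)]<\infty$, so $\HH/\Delta(n_1,n_2,n_3)\to\HH/N(\Gamma)$ is a degree-$d$ covering of orbifolds. Two Riemann--Hurwitz arguments constrain the signature $(h;m_1,\dots,m_s)$ of $N(\Gamma)$: applying Riemann--Hurwitz to the map of underlying surfaces $\widehat{\C}\to|\HH/N(\Gamma)|$ (total space of genus $0$) forces $h=0$; and comparing orbifold Euler characteristics, $-\chi^{\mathrm{orb}}(\Delta(n_1,n_2,n_3))=d\cdot\bigl(-\chi^{\mathrm{orb}}(N(\Gamma))\bigr)$ with $-\chi^{\mathrm{orb}}(\Delta(n_1,n_2,n_3))\in(0,1)$, forces $s\le 4$ (the case $d=1$ gives $N(\Gamma)=\Delta(n_1,n_2,n_3)$ directly, and $d\ge 2$ excludes $s\ge 5$ since then $-\chi^{\mathrm{orb}}(N(\Gamma))\ge\tfrac12$ while $-\chi^{\mathrm{orb}}(N(\Gamma))<\tfrac1d\le\tfrac12$). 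It remains to exclude $s=4$, i.e.\ that $N(\Gamma)$ is a genuine quadrilateral group containing $\Delta(n_1,n_2,n_3)$ with index $d\ge 2$. This is the point I expect to be the main obstacle: it is not ruled out by area considerations alone and is settled instead by Singerman's classification of inclusions among finitely generated Fuchsian groups, from which no such configuration occurs for a torsion-free normal subgroup $\Gamma$. Hence $N(\Gamma)$ has signature $(0;m_1,m_2,m_3)$ and is a triangle group, completing the cycle.

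Throughout, the routine verifications I would suppress are: that the orbifold $\HH/\Delta(n_1,n_2,n_3)$ really is a sphere with exactly the three marked points of orders $n_i$ (Seifert--van Kampen, as in \eqref{eq:presentation}); that torsion-freeness of $\Gamma$ is equivalent to $X$ being a manifold, which is what keeps $\beta^{-1}(\{0,1,\infty\})$ free of orbifold points; and the standard identification of $\aut(D)$ with the deck group of $\beta$ for a regular dessin.
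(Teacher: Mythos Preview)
The paper does not prove this theorem; it is stated with attribution to \cite{jones} and \cite{gir} and used as background. There is thus no ``paper's own proof'' to compare against.

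Your argument is essentially correct and follows the standard route found in those references. One small point: in the $(1)\Rightarrow(4)$ step you write that excluding the quadrilateral case is ``settled instead by Singerman's classification of inclusions among finitely generated Fuchsian groups, from which no such configuration occurs for a torsion-free normal subgroup $\Gamma$.'' The torsion-freeness and normality of $\Gamma$ are irrelevant here; the exclusion of $s=4$ (indeed of any non-triangle signature) follows purely from the inclusion $\Delta(n_1,n_2,n_3)\le N(\Gamma)$ via the Teichm\"uller rigidity argument underlying Singerman's work: a finite-index inclusion $\Delta\le\Delta'$ induces an embedding $T(\Delta')\hookrightarrow T(\Delta)$ of Teichm\"uller spaces, so $\dim T(N(\Gamma))\le\dim T(\Delta(n_1,n_2,n_3))=0$, forcing $N(\Gamma)$ to have signature $(0;m_1,m_2,m_3)$. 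This also renders your separate area-based exclusion of $s\ge 5$ and the Riemann--Hurwitz step for $h=0$ unnecessary, though they are not wrong.
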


\begin{figure}
\includegraphics[page=8,scale=.5]{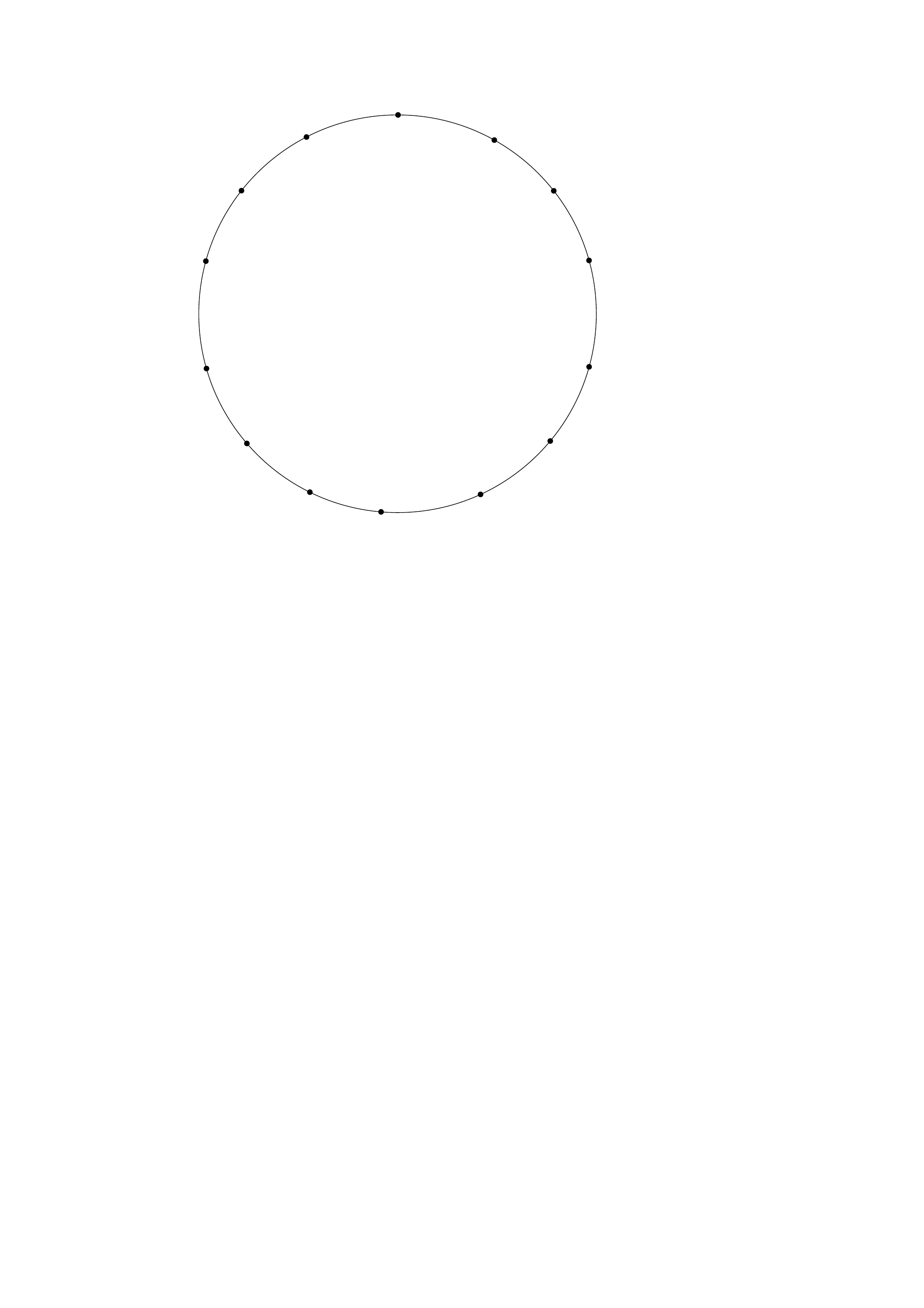}
\caption{A regular dessin d'enfant with $C_7$ as its group of automorphisms on a quasiplatonic surface of genus three.}
\label{fig:quasiplatonicgenus3}
\end{figure}



Let $D$ be a regular dessin on a quasiplatonic surface $X\cong\HH/\Gamma$ with $\aut(D)\cong G$. Then $G\cong\Delta(n_1,n_2,n_3)/\Gamma$ for some triangle group $\Delta(n_1,n_2,n_3)$. That gives an associated conformal $G$-action on the surface with orbit space $X/G$ homeomorphic to a sphere branched over three points. The map $X\rightarrow X/G$ is the Bely{\u\i} function of $D$. On the other hand, if we are given a topological $G$-action on the surface with a genus-zero orbit space branching over three points, then a result of Wootton says that it must be the conformal one given by this dessin \cite{woottextend}. We will now discuss these special topological actions in detail.


\subsection{Quasiplatonic Topological Action Formulas}

W. J. Harvey in \cite{harvey} gives necessary and sufficient conditions for $C_n$ to act topologically on a surface $X$ of genus at least two, which we refer to as Harvey's Theorem. The signatures $(n_1,n_2,n_3)$ satisfying Harvey's Theorem will be called \emph{admissible}. We state the general form of Harvey's Theorem below.

\begin{theorem}[Harvey \cite{harvey}]\label{thm:harvey}
Let $\Delta$ be a Fuchsian group with presentation as in \eqref{eq:presentation} and let $m=\text{lcm}\lbrace n_1,\ldots,n_k\rbrace$. Then there exists a surface kernel epimorphism $\varphi:\Delta\twoheadrightarrow C_n$ if and only if the following hold:
\begin{enumerate}
\item $m=lcm\lbrace n_1,\ldots,\widehat{n_i},\ldots,n_k\rbrace$ for each $i$, where $\widehat{n_i}$ denotes exclusion of $n_i$.
\item $m$ divides $n$, and if $g=0$, then $m=n$.
\item If $m$ is even, the number of periods divisible by the maximum power of 2 dividing $m$ is even.
\end{enumerate}
\end{theorem}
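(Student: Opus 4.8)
The strategy is to linearize the problem over $\Z/n\Z$ and then solve it one prime at a time. By the correspondence between surface kernel epimorphisms and generating vectors recalled above, an epimorphism $\varphi\colon\Delta\twoheadrightarrow C_n$ with torsion-free kernel exists if and only if $C_n$ admits a $(g;n_1,\ldots,n_k)$-generating vector, torsion-freeness of $\ker\varphi$ being exactly the requirement that $\varphi(c_i)$ have order $n_i$. Since $C_n$ is abelian, every commutator $[\alpha_i,\beta_i]$ is trivial, so writing $C_n$ additively as $\Z/n\Z$ a generating vector amounts to a choice of $\eta_1,\ldots,\eta_k\in\Z/n\Z$ with $|\eta_i|=n_i$ and $\eta_1+\cdots+\eta_k=0$, subject to $\langle\eta_1,\ldots,\eta_k,\alpha_1,\ldots,\beta_g\rangle=\Z/n\Z$. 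If $g\geq 1$ the generation requirement is automatic (take $\alpha_1$ to be a generator of $C_n$ and all other free coordinates zero), so the whole problem reduces to producing the $\eta_i$; if $g=0$ it becomes $\gcd(\eta_1,\ldots,\eta_k,n)=1$.

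For necessity, conditions (i) and (ii) fall out immediately. Each $n_i=|\eta_i|$ divides $n$, so $m\mid n$; rewriting the relation as $\eta_i=-\sum_{j\neq i}\eta_j$ shows $n_i$ divides $\operatorname{lcm}\{n_j:j\neq i\}$, forcing $m=\operatorname{lcm}\{n_1,\ldots,\widehat{n_i},\ldots,n_k\}$ for every $i$; and if $g=0$ the subgroup generated by the $\eta_i$ equals $d\cdot\Z/n\Z$ with $d=\gcd(\eta_1,\ldots,\eta_k,n)$, which divides $\gcd_i(n/n_i)=n/m$, so generation forces $m=n$. The real content is (iii). Let $2^s\,\|\,m$ with $s\geq 1$, project $\Z/n\Z$ onto its Sylow $2$-subgroup $\Z/2^t\Z$ (so $t\geq s$), and note that the image of $\eta_i$ is $2^{\,t-v_2(n_i)}$ times an odd number, where $v_2$ is the $2$-adic valuation and $v_2(n_i)\leq s$. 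Dividing the relation $\sum_i\eta_i=0$, read in $\Z/2^t\Z$, by $2^{\,t-s}$ and reducing modulo $2$, every period with $v_2(n_i)<s$ contributes $0$ and every period with $v_2(n_i)=s$ contributes $1$; hence the number of periods with $v_2(n_i)=s$, that is, the number divisible by $2^{s}$, is even.

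For sufficiency I would assume (i)--(iii) and build the $\eta_i$ prime by prime, assembling them via the Chinese Remainder Theorem $\Z/n\Z\cong\prod_{p\mid n}\Z/p^{t_p}\Z$. Fix $p\mid n$; put $t=v_p(n)$, $e_i=v_p(n_i)$, $M=\max_i e_i=v_p(m)$. By (i) the value $M$ is attained by at least two indices, and if $g=0$ then $m=n$ gives $M=t$. I want $x_i\in\Z/p^{t}\Z$ of order $p^{e_i}$, i.e.\ $v_p(x_i)=t-e_i$, with $\sum_i x_i=0$. Assign $x_i=p^{\,t-e_i}$ to each index with $e_i<M$; their sum $c$ then has $v_p(c)\geq t-M+1$. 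The indices with $e_i=M$ carry $x_i=p^{\,t-M}u_i$ with $u_i$ a unit mod $p$, and it remains to solve $\sum_{e_i=M}u_i\equiv -c\,p^{\,M-t}\pmod{p^{M}}$ with all $u_i$ units. For odd $p$ this is always possible (fix all but one of the $u_i$ to convenient units and let the last absorb the congruence, using that $\Z/p\Z$ has more than one unit). For $p=2$ a unit is just an odd number, so $\sum u_i$ has the parity of the number of indices with $e_i=M$, while the right-hand side is even — and (iii) is precisely what makes that count even, after which one again sets all but one $u_i$ equal to $1$. The assembled $\eta_i$ have order $n_i$ and sum to $0$; when $g=0$, for each $p$ some index has $e_{i,p}=t_p$, so $\min_i v_p(\eta_i)=0$ at every $p$ and the $\eta_i$ generate $\Z/n\Z$.

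The main obstacle is the $2$-adic analysis: one must verify that the parity obstruction discovered in the necessity step is the \emph{only} obstruction, i.e.\ that once (iii) holds the absorption construction for $p=2$ succeeds for every admissible distribution of valuations, and one must handle the degenerate cases — very few periods, $m=1$, and the interaction between $m=n$ and generation when $g=0$ — without the bookkeeping swamping the idea. The rest is a routine local-to-global argument once $\prod[\alpha_i,\beta_i]\prod\eta_i=1$ has been replaced by the linear equation $\sum\eta_i=0$ in $\Z/n\Z$.
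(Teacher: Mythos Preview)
The paper does not prove Harvey's Theorem; it is stated as Theorem~\ref{thm:harvey} with a citation to Harvey's original 1966 paper \cite{harvey} and then used as a black box throughout. So there is no ``paper's own proof'' to compare against.

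That said, your argument is correct and is essentially the classical one. The reduction to the additive equation $\sum_i\eta_i=0$ in $\Z/n\Z$ with prescribed orders, the prime-by-prime construction via the Chinese Remainder Theorem, and the identification of the parity obstruction at $p=2$ as the sole nontrivial constraint are exactly how Harvey's original proof proceeds. Two small points worth tightening: your notation $-c\,p^{M-t}$ involves a nonpositive exponent and should be written as $-c/p^{\,t-M}$ (which is well-defined since $v_p(c)\geq t-M+1$); and in the necessity of (ii) you write that $d=\gcd(\eta_1,\ldots,\eta_k,n)$ \emph{divides} $n/m$, but in fact $d$ \emph{equals} $n/m$, since $\gcd(\eta_i,n)=n/n_i$ exactly. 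Neither affects the validity of the argument. The unit-absorption step for odd $p$ is fine once you note that with at least two indices attaining $e_i=M$ and at least two units in $\Z/p\Z$, one can always adjust a single $u_j$ to avoid making the final $u_r$ divisible by $p$.
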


We are interested in the quasiplatonic case, i.e., when the quotient genus $g=0$ and there are three periods, so $k=3$. This the case which corresponds to actions of a group on a surface having a regular dessin, which are the quasiplatonic topological $G$-actions. 

We illustrate how the automorphism group $G$ of a dessin can describe a quasiplatonic topological $G$-action on a surface.

\begin{example}\label{ex:twoactions}
Let $G=C_7$. By Harvey's Theorem, the only admissible signature $(n_1,n_2,n_3)$ of a surface whose uniformizing Fuchsian group is  normally contained in triangle group $\Delta(n_1,n_2,n_3)$ is $(n_1,n_2,n_3)=(7,7,7)$. We thus look for $(0;7,7,7)$-generating vectors for $C_7$. Write $C_7$ as $\Z/7\Z$ under addition. Two such examples of $(0;7,7,7)$-generating vectors for $C_7$ are
\begin{equation*}
(3,3,1),\quad (4,2,1).
\end{equation*}
These two generating vectors correspond to distinct $C_7$-actions, because their associated surface kernel epimorphisms are not equivalent. More explicitly, there does not exist both an automorphism of $\Z/7\Z$ and an automorphism of $\Delta(7,7,7)$ which maps one generating vector to the other. This is because all automorphisms of $\Z/7\Z$ are multiplication by nonzero elements, while automorphisms of a triangle group are essentially permutations of the components of the vectors. 

Now, consider the following two dessins in Figure \ref{fig:twodessins}, drawn as the boundary of a regular, hyperbolic 14-gon inside the hyperbolic disc. The arrows indicate how to identify edges to form the Riemann surfaces. The glued edges become the edges of the dessin embedded on the surface.
\begin{figure}[ht]
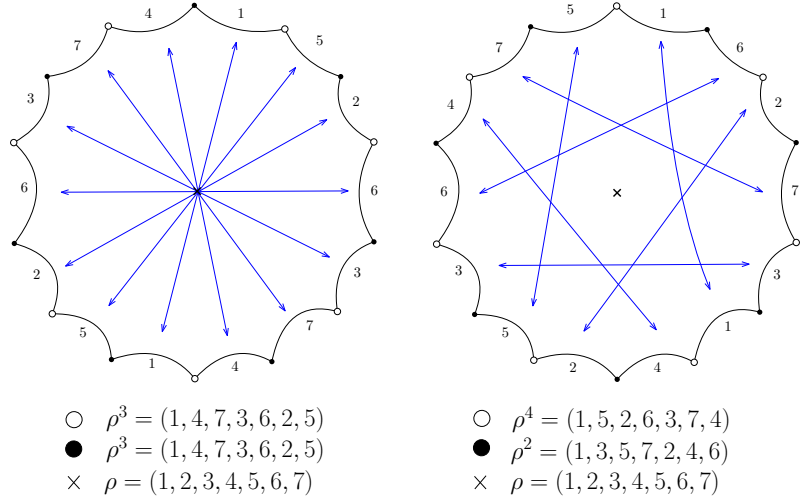

\includegraphics[page=5,scale=.55]{latinx.pdf}\qquad \includegraphics[page=7,scale=.55]{latinx.pdf}
\caption{Two regular dessins on genus three surfaces, both having $C_7$ as their automorphism group. Using clockwise orientation, the permutations written underneath the dessins describe the order of the edges around a white or black vertex, or a face.}\label{fig:twodessins}
\end{figure}
A rotation about the face center clockwise by $2\pi/7$ sends the edge numbered 1 to edge 2, edge 2 to edge 3, and so on. This is described by $\rho=(1,2,3,4,5,6,7)$. Reading clockwise again, the rotations about the white and black vertices, and the face center, respectively, are given in Figure \ref{fig:twodessins} as powers of $\rho$ next to $\circ, \bullet, \times$. The automorphism group of both dessins is indeed $C_7$, which can be verified by computing the centralizer in $S_7$ of their monodromy groups. The edge identifications specified in Figure \ref{fig:twodessins} result in two compact Riemann surfaces of genus three, which is given by the Riemann-Hurwitz formula for a $C_7$ quasiplatonic action with signature $(7,7,7)$:
\begin{equation*}
g=1+\frac{7}{2}\left(1-\frac{1}{7}-\frac{1}{7}-\frac{1}{7}\right)=3.
\end{equation*}
Both of these dessins are not equivalent and also describe two distinct topological actions with generating vectors $(3,3,1)$ and $(4,2,1)$.
\end{example}

R. Benim and A. Wootton in \cite{ben} derive explicit formulae which enumerates the number of distinct quasiplatonic topological actions of $C_n$ on a surface $X$ with a given admissible signature. They use the following definitions to describe their results.

\begin{definition}
Let $\tau_1(m,n)$ denote the number of nonzero, noncongruent solutions $x$ to the equation
\begin{equation*}
x^2+2x\equiv 0\modd m
\end{equation*}
where $gcd(x,m)=m/n$. Also, let $\tau_2(n)$ be the number of nonzero, noncongruent solutions $x$ to the equation
\begin{equation*}
x^2+x+1\equiv 0\modd n.
\end{equation*}
\end{definition}

\begin{theorem}[Benim and Wootton \cite{ben}]\label{thm:ben}
Let $n$ be a positive integer. Suppose $C_n$ acts on $X$ of signature $(n_1,n_2,n_3)$. Then the number $T$ of distinct topological actions of $C_n$ on $X$ is given in the three possibilities below.
\begin{enumerate}
\item If each $n_i$ is distinct, then
\begin{equation*}
T=\phi(\gcd(n_1,n_2,n_3))\prod_{i=1}^w\frac{p_i-2}{p_i-1}.
\end{equation*}
\item If two $n_i$ are equal so that the signature is $(n_1,n_2,n_3)=(n_1,n,n)$ with $n_1\neq n$, then
\begin{equation*}
T=\frac{1}{2}\left(\tau_1(n,n_1)+\phi(n_1)\prod_{i=1}^w\frac{p_i-2}{p_i-1}\right).
\end{equation*}
\item If all $n_i$ are equal so that the signature is $(n_1,n_2,n_3)=(n,n,n)$, then
\begin{equation*}
T=\frac{1}{6}\left(3+2\tau_2(n)+\phi(n)\prod_{i=1}^w\frac{p_i-2}{p_i-1}\right).
\end{equation*}
\end{enumerate}
\end{theorem}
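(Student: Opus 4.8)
The plan is to convert the count of topological actions into a count of orbits of generating vectors, evaluate that count by an elementary prime‑by‑prime computation together with Burnside's lemma, and then match the resulting fixed‑point numbers with $\tau_1$ and $\tau_2$.

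\emph{Step 1: reduction to generating triples.} By Theorem~\ref{thm:topactionfuchs}, Theorem~\ref{thm:kernelepiequiv} and the generating‑vector dictionary, $T$ is the number of $(0;n_1,n_2,n_3)$‑generating vectors for $C_n\cong\Z/n\Z$, counted up to the joint action of $\aut(C_n)\cong(\Z/n\Z)^*$ and $\aut(\Delta(n_1,n_2,n_3))$. Written additively, such a vector is a triple $(\eta_1,\eta_2,\eta_3)$ with $\eta_1+\eta_2+\eta_3\equiv 0\pmod n$ and $|\eta_i|=n_i$; the generation condition is automatic, since $\gcd(n/n_1,n/n_2,n/n_3)=n/\text{lcm}(n_1,n_2,n_3)=1$ using that $g=0$ forces $\text{lcm}(n_1,n_2,n_3)=n$ in Theorem~\ref{thm:harvey}. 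Because $C_n$ is abelian the braid moves that generate the $\aut(\Delta)$‑equivalence collapse to permutations, so two triples are equivalent exactly when one is carried to the other by componentwise multiplication by a unit followed by a permutation in the stabilizer $S\le S_3$ of $(n_1,n_2,n_3)$ — trivial when the $n_i$ are distinct, of order $2$ when exactly two agree, and $S_3$ when all agree. Thus $T$ is the number of $(\Z/n\Z)^*\times S$‑orbits on the finite set $V$ of admissible triples.

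\emph{Step 2: counting $(\Z/n\Z)^*$‑orbits.} If $u\eta_i=\eta_i$ for all $i$ then $n_i\mid(u-1)$ for all $i$, so $n=\text{lcm}(n_1,n_2,n_3)\mid(u-1)$ and $u=1$; hence $(\Z/n\Z)^*$ acts freely on $V$, and the number $N_0$ of $(\Z/n\Z)^*$‑orbits is $|V|/\phi(n)$. I would compute $|V|$ by the Chinese Remainder Theorem: the conditions ``prescribed order of each $\eta_i$'' and ``$\sum\eta_i\equiv 0$'' decouple across the prime‑power factors of $n$, so $|V|$ is a product of elementary solution counts over the rings $\Z/p^{a}\Z$. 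Assembling these and dividing by $\phi(n)$ gives the closed form $N_0=\phi(\gcd(n_1,n_2,n_3))\prod_{i=1}^{w}\frac{p_i-2}{p_i-1}$ in the distinct case, and $\phi(n_1)\prod_{i=1}^{w}\frac{p_i-2}{p_i-1}$, resp.\ $\phi(n)\prod_{i=1}^{w}\frac{p_i-2}{p_i-1}$, in the other two. I expect this local count to be the main obstacle: the enumeration at the prime $2$ differs in shape from that at the odd primes, and one must invoke the parity condition~(iii) of Theorem~\ref{thm:harvey} to rule out the degenerate configuration in which all three periods are divisible by the top power of $2$ — which is precisely what keeps the bad prime $2$ out of the product.

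\emph{Step 3: Burnside over $S$.} Apply Burnside's lemma to the residual action of $S$ on the $N_0$‑element set of $(\Z/n\Z)^*$‑orbits. If the $n_i$ are distinct then $T=N_0$. In the case $(n_1,n,n)$, $T=\tfrac12\big(N_0+F\big)$, where $F$ counts the orbits fixed by the transposition of the two equal periods; such an orbit has a representative $(\eta_1,\eta,u\eta)$ with $u^2\equiv 1\pmod n$ and $\eta_1=-(1+u)\eta$, the unit $\eta$ being absorbed by scaling, so $F$ is the number of units $u$ with $u^2\equiv 1\pmod n$ and $\gcd(1+u,n)=n/n_1$. Setting $x=-(1+u)$ turns this into a nonzero solution of $x^2+2x\equiv 0\pmod n$ with $\gcd(x,n)=n/n_1$, and since $x+1$ is automatically coprime to $n$ this is a bijection, giving $F=\tau_1(n,n_1)$. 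In the case $(n,n,n)$, condition~(iii) of Theorem~\ref{thm:harvey} forces $n$ odd; then each of the three transpositions fixes exactly the single orbit of $(-2t,t,t)$ (with $-2$ a unit), and each of the two $3$‑cycles fixes exactly $\tau_2(n)$ orbits, namely those of $(u^2t,ut,t)$ for the necessarily invertible roots $u$ of $x^2+x+1\equiv 0\pmod n$, so $T=\tfrac16\big(N_0+3+2\tau_2(n)\big)$. Substituting the formulas for $N_0$ from Step~2 yields the three stated expressions. Aside from the local enumeration in Step~2, the remaining delicate point is verifying that the substitutions $u\leftrightarrow x$ respect the $\gcd$ constraints built into the definitions of $\tau_1$ and $\tau_2$ and that the corresponding triples genuinely have the prescribed orders.
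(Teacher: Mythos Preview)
The paper does not prove Theorem~\ref{thm:ben}; it is quoted from \cite{ben} and used as a black box throughout Sections~\ref{sec:qceven}--\ref{sec:qcodd}. So there is no in-paper argument to compare against, and your proposal is effectively a reconstruction of the Benim--Wootton proof rather than an alternative to anything here.

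That said, your outline follows exactly the route one expects from \cite{ben} and \cite{woottextend}: reduce via Theorem~\ref{thm:kernelepiequiv} to $(\Z/n\Z)^*\times S$-orbits of generating triples, observe that the unit action is free because $\text{lcm}(n_1,n_2,n_3)=n$, compute $|V|/\phi(n)$ prime-by-prime, and then run Burnside over $S$ to pick up the $\tau_1$ and $\tau_2$ corrections. The identification of the transposition fixed points with $\tau_1(n,n_1)$ via $x=-(1+u)$, and of the $3$-cycle fixed points with $\tau_2(n)$, is the right bijection. Two small points worth tightening: in the $(n_1,n,n)$ case you should check explicitly that the candidate triple $(x,1,u)$ really is fixed by the transposition, which amounts to $n_1\mid(u-1)$; this follows from $(u-1)(u+1)\equiv 0\pmod n$ together with $\gcd(u+1,n)=n/n_1$, but the $2$-primary part needs a word. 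Likewise, in Step~2 the meaning of the index set $\{p_1,\dots,p_w\}$ (primes whose full $n$-power divides every period) should be stated before the local computation, since the theorem as quoted in this paper leaves $w$ undefined.
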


When calculating $T$ for a given signature $(n_1,n_2,n_3)$, we will write this as $T(n_1,n_2,n_3)$. We may also refer to the number of topological actions of $C_n$ of signature $(n_1,n_2,n_3)$ as the \emph{$T$-value} of $(n_1,n_2,n_3)$.

\begin{example}\label{ex:twoactionverified}
Consider $C_7$ again with the signature $(7,7,7)$. According to Theorem \ref{thm:ben},
\begin{equation*}
T(7,7,7)=\frac{1}{6}\left(3+2\cdot \tau_2(7)+\phi(7)\cdot\frac{7-2}{7-1}\right)=\frac{1}{6}\left(3+2\cdot 2+5\right)=2.
\end{equation*}
(The value of $\tau_2(n)$ in general is given in Lemmas \ref{lem:tau2multi} and \ref{lem:tau2} in the next section.) Therefore, the two generating vectors of Example \ref{ex:twoactions} define the only two quasiplatonic $C_7$-actions on surfaces.
\end{example}

\section{Lemmas}\label{sec:lem}
To prove the first formula of our main result, we will use the following lemmas in order to use formulas from \cite{ben}. Lemmas \ref{lem:tau1} and \ref{lem:tau2multi} are taken from \cite{ben}, while Lemma \ref{lem:tau2} is new.

\begin{lemma}[Benim, Wootton \cite{ben}]\label{lem:tau1}
Let $m$ and $n$ be positive integers, written in their prime factorization as
\begin{equation*}
m=2^{k_0}\prod_{i=1}^\ell p_i^{k_i},\quad n=2^{h_0}\prod_{i=1}^\ell p_i^{h_i}.
\end{equation*}
Then the value of $\tau_1(m,n)$ is given as follows:
\begin{enumerate}
\item if $h_0\neq 0,1$ nor $k_0-1$, or $h_i\neq 0$ nor $k_i$ for some $i>0$, then $\tau_1(m,n)=0$;
\item if (i) above does not hold, and if $h_0=0$ or $h_0=1$, then $\tau_1(m,n)=1$. Otherwise, $\tau_1(m,n)=2$.
\end{enumerate}
\end{lemma}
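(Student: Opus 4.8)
The plan is to reduce the count via the Chinese Remainder Theorem to a product of \emph{local} counts over the prime powers dividing $m$, reading the constraint $\gcd(x,m)=m/n$ as a prescription on the $p$-adic valuations of $x$. Concretely, factoring $x^2+2x=x(x+2)$, a residue $x\in\Z/m\Z$ solves $x(x+2)\equiv 0\pmod m$ if and only if it solves the congruence modulo $2^{k_0}$ and modulo every $p_i^{k_i}$; and $\gcd(x,m)=m/n$ is equivalent to $\min(v_2(x),k_0)=k_0-h_0$ together with $\min(v_{p_i}(x),k_i)=k_i-h_i$ for all $i$. Both conditions being local, $\tau_1(m,n)$ equals the product, over all prime powers $q\,\|\,m$, of the number of residues mod $q$ meeting the local congruence and the local valuation constraint. (One may assume $n\ge 2$, so that the excluded residue $x=0$ never satisfies the $\gcd$ constraint, and $n\mid m$, since otherwise $\tau_1(m,n)=0$ trivially.)

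Fix an odd prime $p=p_i$ and set $k=k_i$, $h=h_i$. Since $\gcd(x,x+2)\mid 2$ is coprime to $p$, the prime $p$ divides at most one of $x,x+2$, so $p^{k}\mid x(x+2)$ forces $x\equiv 0$ or $x\equiv -2\pmod{p^{k}}$, and these are distinct. In the first case $v_p(x)\ge k$, so $h=0$; in the second $v_p(x)=0$, so $h=k$. Thus the local count at $p$ is $0$ when $h\notin\{0,k\}$ and is exactly $1$ when $h\in\{0,k\}$. This yields the odd-prime alternative in case (i): if $h_i\notin\{0,k_i\}$ for some $i>0$ the product vanishes, and otherwise each odd prime contributes a factor of $1$.

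The prime $2$ is where the real work lies. If $k_0=0$ the local count is $1$. For $k_0\ge 1$ a solution must be even, say $x=2y$, whence $x(x+2)=4\,y(y+1)$ and $v_2(x(x+2))=2+v_2(y)+v_2(y+1)$, exactly one of the last two summands vanishing. Solving $v_2(y(y+1))\ge k_0-2$ then shows: for $k_0\le 2$ every even residue works, giving $x\equiv 0\pmod{2^{k_0}}$ ($h_0=0$) and, when $k_0=2$, also $x\equiv 2\pmod 4$ ($h_0=1$); while for $k_0\ge 3$ one obtains precisely the four residues $x\equiv 0,\ 2^{k_0-1},\ -2,\ 2^{k_0-1}-2\pmod{2^{k_0}}$, with $v_2$ equal respectively to $k_0$ (capped by $k_0$), $k_0-1$, $1$, $1$, i.e.\ with $h_0=0,\,1,\,k_0-1,\,k_0-1$. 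Hence the local count at $2$ is $0$ unless $h_0\in\{0,1,k_0-1\}$; it equals $2$ exactly when $h_0=k_0-1$ with $k_0\ge 3$ (forcing $h_0\ne 0,1$); and it equals $1$ otherwise. Multiplying the local counts, $\tau_1(m,n)=0$ precisely under condition (i), and otherwise $\tau_1(m,n)$ equals the local count at $2$: namely $1$ if $h_0\in\{0,1\}$ and $2$ if $h_0=k_0-1\notin\{0,1\}$, which is exactly case (ii).

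The main obstacle is the $2$-adic bookkeeping: enumerating the solutions of $x(x+2)\equiv 0\pmod{2^{k_0}}$ for every $k_0$, verifying that the four residues listed for $k_0\ge 3$ are genuinely pairwise distinct, and checking how the small-$k_0$ cases degenerate (for instance $2^{k_0-1}$ and $-2$ collide when $k_0\le 2$). The odd-prime computation and the Chinese Remainder assembly are routine.
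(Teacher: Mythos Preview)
The paper does not give its own proof of this lemma: it is quoted from \cite{ben} without argument, so there is nothing to compare against on that side. Your proposal, however, is a complete and correct proof. The CRT decomposition is valid because both the congruence $x(x+2)\equiv 0\pmod m$ and the condition $\gcd(x,m)=m/n$ (rewritten as a prescription on $\min(v_p(x),v_p(m))$ for each prime $p\mid m$) are conjunctions of local conditions, and your observation that for $n\ge 2$ the residue $x=0$ is already ruled out by the $\gcd$ constraint legitimately disposes of the ``nonzero'' requirement before passing to the product. The odd-prime step is immediate from $\gcd(x,x+2)\mid 2$, and your $2$-adic case analysis is accurate: I checked that for $k_0\ge 3$ the four residues $0,\ 2^{k_0-1},\ -2,\ 2^{k_0-1}-2$ are pairwise distinct modulo $2^{k_0}$ with the stated $2$-valuations, and that for $k_0\le 2$ the list collapses exactly as you describe. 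The only edge case worth flagging is $n=1$, where the lemma as stated would give $\tau_1(m,1)=1$ while the definition gives $0$; but since $\tau_1$ is only ever invoked with $n=n_1\ge 2$ a period of a signature, your standing assumption $n\ge 2$ is harmless.
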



\begin{lemma}[Benim, Wootton \cite{ben}]\label{lem:tau2multi}

For any positive integer $n=\prod_i p_i^{a_i}$ for distinct primes $p_i$ and positive integers $a_i$, we have $\tau_2(n)=\prod_i\tau_2(p_i^{k_i})$.
\end{lemma}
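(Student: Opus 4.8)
The plan is to reduce the multiplicativity of $\tau_2$ to the Chinese Remainder Theorem applied to the congruence $x^2 + x + 1 \equiv 0 \pmod{n}$. Write $n = \prod_i p_i^{a_i}$ with the $p_i$ distinct primes. Since the $p_i^{a_i}$ are pairwise coprime, the ring isomorphism $\Z/n\Z \cong \prod_i \Z/p_i^{a_i}\Z$ sends a solution $x$ of $x^2+x+1\equiv 0 \pmod n$ to the tuple $(x \bmod p_i^{a_i})_i$, and this tuple consists of solutions of the same quadratic modulo each $p_i^{a_i}$; conversely any tuple of such solutions glues to a unique solution mod $n$. This gives a bijection between the solution set modulo $n$ and the product of the solution sets modulo the $p_i^{a_i}$, hence $\tau_2(n) = \prod_i \tau_2(p_i^{a_i})$.

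First I would state carefully what $\tau_2$ counts — the number of residues $x \in \Z/n\Z$ (nonzero, noncongruent) with $x^2+x+1 \equiv 0 \pmod n$ — and note that the ``nonzero'' qualifier is automatic since $x = 0$ never satisfies the congruence (the left side is $1$), so no bookkeeping is needed on that front. Then I would invoke the CRT ring isomorphism $\Theta\colon \Z/n\Z \xrightarrow{\sim} \prod_i \Z/p_i^{a_i}\Z$ and observe that because $\Theta$ is a ring homomorphism, $\Theta(x^2+x+1) = \bigl((x\bmod p_i^{a_i})^2 + (x \bmod p_i^{a_i}) + 1\bigr)_i$, so $x$ solves the congruence mod $n$ if and only if each coordinate solves it mod $p_i^{a_i}$. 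Restricting $\Theta$ to the solution set therefore yields a bijection onto $\prod_i \{\, y \in \Z/p_i^{a_i}\Z : y^2+y+1\equiv 0 \,\}$, and counting both sides gives the claim.

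There is essentially no obstacle here — the statement is a direct application of the Chinese Remainder Theorem, and the only thing to be slightly careful about is that the polynomial $t^2+t+1$ has its coefficients in $\Z$ so that reduction commutes with evaluation, and that the ``nonzero'' condition in the definition of $\tau_2$ does not interfere with the bijection. If one wanted to be fully explicit, one could phrase it without CRT terminology: a residue $x$ mod $n$ is determined by its residues $x_i$ mod $p_i^{a_i}$, and $n \mid x^2+x+1$ iff $p_i^{a_i} \mid x^2 + x + 1$ for all $i$ (since the $p_i^{a_i}$ are coprime and their product is $n$) iff $p_i^{a_i} \mid x_i^2 + x_i + 1$ for all $i$; the map $x \mapsto (x_1,\dots,x_r)$ is then a bijection from the solutions mod $n$ to the product of the solution sets, which is exactly the multiplicative formula. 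I expect the author's proof to be a one- or two-sentence CRT argument along these lines.
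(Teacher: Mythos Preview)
Your proof is correct: the multiplicativity of $\tau_2$ is exactly a Chinese Remainder Theorem statement, and your observation that $x=0$ never solves $x^2+x+1\equiv 0$ cleanly disposes of the ``nonzero'' qualifier so that no bookkeeping is needed. Note, however, that the paper does not give its own proof of this lemma at all---it is quoted without proof from the cited reference \cite{ben} (the paper says explicitly that this lemma is ``taken from \cite{ben}'')---so there is no in-paper argument to compare against; your CRT argument is the standard one and is presumably what \cite{ben} does as well.
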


\begin{lemma}\label{lem:tau2}
Let $\tau_2(p^a)$ be the number of nonzero, noncongruent solutions $x$ to the equation
\begin{equation*}
x^2+x+1\equiv 0\modd p^a
\end{equation*}
for any integer $a\geq 1$ and odd prime $p$. Then
\begin{equation*}
\tau_2(p^a)=\left\lbrace\begin{array}{lr}
 0 &\quad p\equiv 5\modd 6, a\geq 1 \text{ or } p=3, a\geq 2\\
 1 &\quad p=3, a=1\\
 2 &\quad p\equiv 1\modd 6, a\geq 1
\end{array}\right..
\end{equation*}

\end{lemma}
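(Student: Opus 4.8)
The plan is to count the roots of $x^2+x+1\equiv 0\pmod{p^a}$ directly, splitting into the cases $p=3$ and $p\neq 3$, and leaning on Hensel's lemma to pass from $a=1$ to general $a$. The key observation is that $x^2+x+1$ is the third cyclotomic polynomial, so its roots modulo $p$ are precisely the primitive cube roots of unity in $\F_p$; hence $x^2+x+1$ has a root mod $p$ (for $p\neq 3$) if and only if $3 \mid p-1$, i.e. $p\equiv 1\pmod 6$ (using that $p$ is an odd prime, so $p\equiv 1$ or $5\pmod 6$). When $p\equiv 1\pmod 6$ there are exactly two such roots mod $p$, and when $p\equiv 5\pmod 6$ there are none; this already settles those two rows for $a=1$.

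For the lifting step, I would differentiate: $\frac{d}{dx}(x^2+x+1)=2x+1$. If $x_0$ is a root mod $p$ with $p\neq 3$, then $2x_0+1\not\equiv 0\pmod p$, since $2x_0+1\equiv 0$ would force (after multiplying the congruence $x_0^2+x_0+1\equiv0$ by $4$ and completing the square) $4(x_0^2+x_0+1)=(2x_0+1)^2+3\equiv 3\pmod p$, impossible when $p\neq 3$. So the derivative is a unit mod $p$, and Hensel's lemma lifts each of the (zero or two) roots mod $p$ uniquely to a root mod $p^a$ for every $a\geq 1$. This gives $\tau_2(p^a)=2$ for $p\equiv 1\pmod 6$ and $\tau_2(p^a)=0$ for $p\equiv 5\pmod 6$, for all $a\geq 1$.

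It remains to handle $p=3$. Here $(2x+1)^2 = 4x^2+4x+1 \equiv x^2+x+1\pmod 3$ up to the constant, more precisely $4(x^2+x+1)=(2x+1)^2+3$, so modulo $3$ the equation becomes $(2x+1)^2\equiv 0\pmod 3$, whose only solution is $x\equiv 1\pmod 3$; thus $\tau_2(3)=1$. For $a\geq 2$, suppose $x$ solves $x^2+x+1\equiv 0\pmod{3^a}$; then $x\equiv 1\pmod 3$, say $x=1+3t$, and substituting gives $x^2+x+1 = 3 + 9t + 9t^2 = 3(1+3t+3t^2)$. Since $1+3t+3t^2\equiv 1\pmod 3$ is a unit, $x^2+x+1$ is exactly divisible by $3$, hence cannot be $\equiv 0\pmod 9$; so for $a\geq 2$ there are no solutions and $\tau_2(3^a)=0$. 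Assembling the three cases yields the stated piecewise formula.

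The main obstacle — really the only subtlety — is the $p=3$ analysis, where the naive Hensel argument fails because the derivative $2x+1$ vanishes mod $3$ at the unique root $x\equiv 1$; the resolution is the explicit substitution $x=1+3t$ showing the valuation of $x^2+x+1$ at $3$ is exactly $1$, which both gives $\tau_2(3)=1$ and obstructs any lift to higher powers. Everything else is a routine application of the cyclotomic/quadratic-residue characterization of cube roots of unity together with standard Hensel lifting.
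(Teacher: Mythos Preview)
Your proof is correct and takes a genuinely different route from the paper's. The paper first multiplies by $4$ and completes the square to convert the congruence into $(2x+1)^2\equiv -3\pmod{p^a}$, then spends three auxiliary propositions computing the Legendre symbol $\genfrac(){}{0}{-3}{p}$ via quadratic reciprocity and invoking Hensel's lemma on $y^2+3$ to lift solutions. You instead recognize $x^2+x+1$ as the third cyclotomic polynomial and read off the base case directly from the structure of $\F_p^\times$: primitive cube roots of unity exist exactly when $3\mid p-1$, giving two roots or none with no appeal to reciprocity. Your Hensel step is applied to $x^2+x+1$ itself rather than to the transformed equation, and your nonvanishing-derivative check (via $4(x_0^2+x_0+1)=(2x_0+1)^2+3$) is slicker than the paper's separate proposition. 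For $p=3$ both arguments are essentially the same direct computation, though your substitution $x=1+3t$ giving exact $3$-adic valuation $1$ is more transparent than the paper's phrasing in terms of singular roots. Overall your approach is shorter and avoids the quadratic-reciprocity machinery entirely; the paper's route has the minor advantage of being more mechanical once one knows the standard Legendre-symbol toolkit, but yours is the more conceptual argument.
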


To prove Lemma \ref{lem:tau2}, we need the following propositions. We say that an integer $a$ is a \emph{quadratic residue} modulo $p$, where $a$ and $p$ are relatively prime, if the congruence equation $x^2\equiv a\modd p$ has a solution for $x$. Also, the \emph{Legendre symbol} $\genfrac(){}{0}{a}{p}$ for odd prime $p$ and integer $a$ is defined to be either $1$ if $a$ is a quadratic residue modulo $p$, $-1$ if $a$ is a quadratic nonresidue modulo $p$, or 0 if $p\mid a$. (See, e.g., Chapter 3 of \cite{niven}.) We will make use of the quadratic reciprocity law: for odd primes $p$ and $q$,
\begin{equation}\label{eq:quadrec}
\genfrac(){}{0}{p}{q}\genfrac(){}{0}{q}{p}=(-1)^{\frac{p-1}{2}\cdot\frac{q-1}{2}}.
\end{equation}
We will also need Hensel's Lemma (see, e.g., Theorem 2.23 in \cite{niven}).

\begin{theorem}[Hensel's Lemma]
Let $f(x)$ be a polynomial with integer coefficients. If $f(a)\equiv 0\mod p^j$ and $f'(a)\not\equiv 0\mod p$, then there exists a unique integer $t$ modulo $p$ such that $f(a+tp^j)\equiv 0\modd p^{j+1}$.
\end{theorem}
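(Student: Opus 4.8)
The plan is to prove Hensel's Lemma by exploiting the finite Taylor expansion of a polynomial and reducing the lifting problem to a single linear congruence modulo $p$. The key structural fact is that the substitution $x = a + tp^{j}$ turns the condition ``$f \equiv 0 \pmod{p^{j+1}}$'' into a linear equation in the unknown $t$, whose invertibility is governed precisely by the hypothesis $f'(a) \not\equiv 0 \pmod p$.

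First I would expand $f$ about the point $a$. Since $f$ is a polynomial of degree $d$, the Taylor expansion terminates:
\begin{equation*}
f(a + h) = \sum_{k=0}^{d} \frac{f^{(k)}(a)}{k!}\, h^{k}.
\end{equation*}
The crucial observation is that each divided derivative $f^{(k)}(a)/k!$ is an integer whenever $a \in \Z$ and $f$ has integer coefficients: writing $f(x) = \sum_i c_i x^{i}$, one computes $f^{(k)}(x)/k! = \sum_i \binom{i}{k} c_i x^{i-k}$, and the binomial coefficients are integers. This integrality is the technical point that makes the divisibility bookkeeping legitimate, and it is the step I would be most careful to state correctly, since the whole argument reduces these coefficients modulo $p^{j+1}$.

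Next I would substitute $h = t p^{j}$ and reduce modulo $p^{j+1}$. Since $j \geq 1$, every term with index $k \geq 2$ carries a factor $(p^{j})^{k} = p^{jk}$ with $jk \geq 2j \geq j+1$, so all such terms vanish modulo $p^{j+1}$. This gives
\begin{equation*}
f(a + t p^{j}) \equiv f(a) + f'(a)\, t\, p^{j} \pmod{p^{j+1}}.
\end{equation*}
Using the hypothesis $f(a) \equiv 0 \pmod{p^{j}}$ I would write $f(a) = p^{j} b$ for some integer $b$, so that
\begin{equation*}
f(a + t p^{j}) \equiv p^{j}\bigl(b + f'(a)\, t\bigr) \pmod{p^{j+1}}.
\end{equation*}
Hence $f(a + t p^{j}) \equiv 0 \pmod{p^{j+1}}$ if and only if $b + f'(a)\, t \equiv 0 \pmod p$, that is, $f'(a)\, t \equiv -b \pmod p$. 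Because $f'(a) \not\equiv 0 \pmod p$ and $p$ is prime, $f'(a)$ is invertible modulo $p$, so this linear congruence has the unique solution $t \equiv -b\,(f'(a))^{-1} \pmod p$, yielding both existence and uniqueness of $t$ modulo $p$ simultaneously.

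I do not expect a serious obstacle here; the argument is entirely elementary once set up. The only places demanding genuine care are the justification that the higher Taylor coefficients $f^{(k)}(a)/k!$ are integers — so that reducing them modulo $p^{j+1}$ is meaningful — and the verification of the inequality $jk \geq j+1$ for all $k \geq 2$, which is exactly where the hypothesis $j \geq 1$ enters. Both are routine, but flagging them explicitly keeps the passage from the full expansion to the truncated linear form rigorous.
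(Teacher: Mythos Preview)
Your proof is correct and is the standard argument for Hensel's Lemma. Note, however, that the paper does not actually supply a proof of this statement: it is quoted as a classical result with a reference to Niven--Zuckerman--Montgomery, so there is no in-paper proof to compare against. Your Taylor-expansion approach is exactly the textbook one, and the two points you flag (integrality of $f^{(k)}(a)/k!$ and the inequality $jk \geq j+1$ for $k \geq 2$, $j \geq 1$) are precisely the places where care is needed.
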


\begin{proposition}\label{prop:2}
The congruence equation $x^2\equiv -3\modd 3^\alpha$ has exactly one solution for an integer $x$ if $\alpha=1$, and has no solutions if $\alpha\geq 2$.

\end{proposition}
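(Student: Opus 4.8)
The plan is to analyze the congruence $x^2 \equiv -3 \pmod{3^\alpha}$ directly, case by case on $\alpha$. For $\alpha = 1$, the statement reduces to $x^2 \equiv -3 \equiv 0 \pmod 3$, so $x \equiv 0 \pmod 3$ is the unique solution modulo $3$; I would simply remark that $-3 \equiv 0 \pmod 3$ and count solutions of $x^2 \equiv 0 \pmod 3$, which is exactly one (namely $x \equiv 0$). So the $\alpha = 1$ case is immediate.

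For $\alpha \geq 2$, I would argue by contradiction, or equivalently by a divisibility/valuation count. Suppose $x^2 \equiv -3 \pmod{3^\alpha}$ with $\alpha \geq 2$. Then $3 \mid x^2$, so $3 \mid x$, say $x = 3y$. Then $9y^2 \equiv -3 \pmod{3^\alpha}$, and since $\alpha \geq 2$ we get $3^2 \mid 9y^2$ while $3 \,\|\, (-3)$ (that is, $3$ divides $-3$ exactly once). Comparing $3$-adic valuations gives $\mathrm{val}_3(9y^2) \geq 2 > 1 = \mathrm{val}_3(-3)$, contradicting $9y^2 \equiv -3 \pmod{3^\alpha}$ once we note $3^\alpha \mid 9y^2 + 3$ forces $3^2 \mid 9y^2 + 3$, hence $3^2 \mid 3$, which is false. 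Therefore there are no solutions when $\alpha \geq 2$.

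I expect there is essentially no obstacle here: this proposition is purely a valuation argument and does not require Hensel's Lemma or quadratic reciprocity (those tools, quoted just before the statement, are presumably aimed at the companion cases $p \equiv 1, 5 \pmod 6$ and at assembling Lemma \ref{lem:tau2}, not at this particular proposition). The only care needed is to phrase the $\alpha \geq 2$ case cleanly — writing $x = 3y$ and then observing that $3^\alpha \mid x^2 + 3 = 9y^2 + 3$ with $\alpha \geq 2$ forces $9 \mid 9y^2 + 3$, hence $9 \mid 3$, a contradiction. That is the whole argument; the write-up should be only a few lines.
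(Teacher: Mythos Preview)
Your proof is correct. The paper's argument is essentially the same in content but is phrased in the language of lifting roots: it sets $f(x)=x^2+3$, notes that the unique mod-$3$ root $x=0$ is singular ($f'(0)\equiv 0\pmod 3$), and then observes that since $f(0)=3\not\equiv 0\pmod{3^\alpha}$ for $\alpha\ge 2$, this root does not lift, so there are no solutions modulo $3^\alpha$. Your direct valuation argument (write $x=3y$, then $9\mid 9y^2+3$ forces $9\mid 3$) is the same observation stripped of the Hensel-lifting vocabulary; it is slightly more elementary and self-contained, while the paper's phrasing fits the general prime-power technique it cites from Niven--Zuckerman--Montgomery and parallels the setup used in the companion Proposition~\ref{prop:1}.
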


The proof of Proposition \ref{prop:2} uses a general technique in solving prime-power modulus equations (see, e.g., Section 2.6 in \cite{niven}). 


\begin{proof}
Let $f(x)=x^2+3$. If $\alpha=1$, the only solution to $f(x)\equiv 0\modd 3$ is $x\equiv 0\modd 3$. Now let $\alpha\geq 2$. Because $f'(0)\equiv 0\modd 3$ (i.e., the root $x=0$ is singular) and $f(0)=3\not\equiv 0\modd 3^\alpha$, no solutions to $f(x)\equiv 0\modd 3^\alpha$ lie over the solution $x=0$ to $f(x)\equiv 0\modd 3$. Any solution to $f(x)\equiv 0\modd 3^\alpha$ must be a solution to $f(x)\equiv 0\modd 3$, and would thus necessarily be a root lying over the only solution $x=0$ to $f(x)\equiv 0\modd 3$. It follows that $f(x)\equiv 0\modd 3^\alpha$ has no solutions.
\end{proof}

\begin{proposition}\label{prop:1}
If $p$ is an odd prime and $\gcd(a,p)=1$, then, for any positive integer $\alpha$, $x^2\equiv a\modd p^\alpha$ has exactly $1+\genfrac(){}{0}{a}{p}$ integer solutions.
\end{proposition}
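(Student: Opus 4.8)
The plan is to induct on $\alpha$, handling the base case $\alpha=1$ by a direct count of square roots modulo $p$, and the inductive step via Hensel's Lemma applied to $f(x)=x^2-a$.

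For $\alpha=1$: if $a$ is a quadratic nonresidue mod $p$, then $\genfrac(){}{0}{a}{p}=-1$ and $x^2\equiv a\modd p$ has no solutions, so the count is $0=1+\genfrac(){}{0}{a}{p}$; if $a$ is a quadratic residue, then $\genfrac(){}{0}{a}{p}=1$ and there are exactly the two solutions $\pm x_0$, which are distinct because $p$ is odd and $p\nmid x_0$, so the count is $2=1+\genfrac(){}{0}{a}{p}$. The case $p\mid a$ does not occur since $\gcd(a,p)=1$.

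For the inductive step, suppose $x^2\equiv a\modd p^j$ has exactly $1+\genfrac(){}{0}{a}{p}$ solutions for some $j\geq 1$. Set $f(x)=x^2-a$, so $f'(x)=2x$. The crucial point is that every solution $x_0$ of $f(x)\equiv 0\modd p^j$ satisfies $f'(x_0)\not\equiv 0\modd p$: from $x_0^2\equiv a\modd p$ and $\gcd(a,p)=1$ we get $p\nmid x_0$, and since $p$ is odd this gives $p\nmid 2x_0$. Hence Hensel's Lemma applies at each solution $x_0$ modulo $p^j$ and yields a unique lift $x_0+tp^j$ modulo $p^{j+1}$. Conversely, every solution modulo $p^{j+1}$ reduces to a solution modulo $p^j$, and distinct solutions modulo $p^j$ produce disjoint families of lifts modulo $p^{j+1}$; so the number of solutions modulo $p^{j+1}$ equals the number modulo $p^j$. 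By induction this number is $1+\genfrac(){}{0}{a}{p}$ for every $\alpha$.

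I do not expect a genuine obstacle here; the one step requiring care is verifying that Hensel's Lemma is applicable at \emph{every} solution, i.e., that the nonsingularity condition $f'(x_0)\not\equiv 0\modd p$ always holds — and this is exactly where both hypotheses ($p$ odd and $\gcd(a,p)=1$) are used. This is in contrast with Proposition \ref{prop:2}, where the relevant root is singular so Hensel's Lemma fails, which is why that case needs the separate lifting argument given there.
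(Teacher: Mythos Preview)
Your proposal is correct and follows essentially the same approach as the paper: both arguments handle the nonresidue case trivially, count the two square roots $\pm x_0$ modulo $p$ in the residue case, verify the nonsingularity condition $f'(x_0)=2x_0\not\equiv 0\modd p$ using $p$ odd and $\gcd(a,p)=1$, and then invoke Hensel's Lemma to lift uniquely. The only cosmetic difference is that you frame the lifting as a formal induction on $\alpha$, whereas the paper phrases it as iterated application of Hensel's Lemma starting from the base level and then appeals to uniqueness to rule out extra solutions.
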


\begin{proof}
Let $f(x)=x^2-a$. Suppose first that $f(x)\equiv 0\modd p$ has no solutions, i.e., $\genfrac(){}{0}{a}{p}=-1$. Then $f(x)\equiv 0\modd p^\alpha$ cannot have any solutions either. Equivalently, if $f(x)\equiv 0\modd p$ has $1+\genfrac(){}{0}{a}{p}=0$ solutions, then so must $f(x)\equiv 0\modd p^\alpha$.

Now suppose $f(x)\equiv 0\modd p$ has a solution, i.e., $\genfrac(){}{0}{a}{p}=1$. Call this solution $x=b_1$. Because $p$ is odd, $b_1\not\equiv-b_1\modd p$, so $x=-b_1$ is another distinct solution modulo $p$. These two solutions are the only ones for $f(x)\equiv 0\modd p$, as $f(x)$ is a polynomial of degree two. That is, $f(x)\equiv 0\modd p$ has exactly $1+\genfrac(){}{0}{a}{p}=2$ solutions.

We next verify the hypotheses of Hensel's Lemma. Since $f'(x)=2x$, we claim $f'(b_1)=2b_1\not\equiv 0\modd p$. Assume for contradiction that $2b_1\equiv 0\modd p$, so that $2b_1=kp$ for some integer $k$. Using the fact that $f(b_1)=b_1^2-a\equiv 0\modd p$, multiplying both sides of $2b_1=kp$ by $b_1$ yields
\begin{equation*}
2a\equiv 2b_1^2=kb_1p\equiv 0\modd p.
\end{equation*}
But then $a\equiv 0\modd p$. This contradicts the assumption that $\gcd(a,p)=1$. Therefore, $f'(b_1)\not\equiv 0\modd p$. A similar argument shows $f'(-b_1)\not\equiv 0\modd p$.

We thus have $f(b_1)\equiv 0\modd p$ and $f'(b_1)\not\equiv 0\modd p$. Then by Hensel's Lemma, there exists a unique $t\modd p$ such that $f(b_1+tp)\equiv 0\modd p^2$. Let $b_2:=b_1+tp$ be the lift of $b_1$. Then $b_2\equiv b_1\modd p$, so that $f'(b_2)\equiv f'(b_1)\not\equiv 0\modd p$. Because $f(b_2)\equiv 0\modd p^2$, we can apply Hensel's Lemma again to lift to an integer $b_3$ such that $f(b_3)\equiv 0\modd p^3$. Continue this lifting process as well for $x=-b_1$, obtaining lifted solutions $\pm b_1,\pm b_2,\pm b_3,\ldots$ to $f(x)\equiv 0\modd p^\alpha$ for $\alpha=1,2,3,\ldots$. It follows that $f(x)\equiv 0\modd p^\alpha$ has at least $1+\genfrac(){}{0}{a}{p}=2$ solutions, namely $\pm b_\alpha$.

Moreover, $f(x)\equiv 0\modd p^\alpha$ must have exactly $2$ solutions. For otherwise, if $c$ is an integer such that $f(c)\equiv 0\modd p^\alpha$, then certainly $f(c)\equiv 0\modd p$. But then either $c\equiv b_1\modd p$ or $c\equiv -b_1\modd p$. By the unique extension property of Hensel's Lemma, either $c\equiv b_\alpha\modd p^\alpha$ or $c\equiv -b_\alpha\modd p^\alpha$. Thus, if $f(x)\equiv 0\modd p$ has $1+\genfrac(){}{0}{a}{p}=2$ solutions, then so must $f(x)\equiv 0\modd p^\alpha$.
\end{proof}

\begin{proposition}\label{prop:3}
For an odd prime $p$,
\begin{equation*}
\genfrac(){}{0}{-3}{p}=\left\lbrace\begin{array}{lr}
 1 & p\equiv 1\modd 6\\
 0 & p=3\\
 -1 & p\equiv 5\modd 6
\end{array}\right..
\end{equation*}
\end{proposition}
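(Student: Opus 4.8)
\textbf{Proof proposal for Proposition \ref{prop:3}.}

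The plan is to compute $\genfrac(){}{0}{-3}{p}$ by splitting it with the multiplicativity of the Legendre symbol and then applying the supplementary law for $-1$ together with quadratic reciprocity \eqref{eq:quadrec}. First, dispose of the trivial case $p=3$: since $3 \mid -3$, the symbol $\genfrac(){}{0}{-3}{3}=0$ by definition. So assume $p$ is an odd prime with $p\neq 3$; then $\gcd(-3,p)=1$, and we may write
\begin{equation*}
\genfrac(){}{0}{-3}{p}=\genfrac(){}{0}{-1}{p}\genfrac(){}{0}{3}{p}.
\end{equation*}

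Next I would use the two standard inputs. The first supplementary law gives $\genfrac(){}{0}{-1}{p}=(-1)^{(p-1)/2}$. For the second factor, apply \eqref{eq:quadrec} with $q=3$ (legitimate since both $p$ and $3$ are odd primes), noting $(3-1)/2=1$:
\begin{equation*}
\genfrac(){}{0}{3}{p}\genfrac(){}{0}{p}{3}=(-1)^{\frac{p-1}{2}\cdot 1}=(-1)^{\frac{p-1}{2}},
\qquad\text{so}\qquad
\genfrac(){}{0}{3}{p}=(-1)^{\frac{p-1}{2}}\genfrac(){}{0}{p}{3}.
\end{equation*}
Multiplying the two computed factors, the sign contributions combine as $(-1)^{\frac{p-1}{2}}\cdot(-1)^{\frac{p-1}{2}}=(-1)^{p-1}=1$ because $p$ is odd, hence $\genfrac(){}{0}{-3}{p}=\genfrac(){}{0}{p}{3}$.

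Finally I would evaluate $\genfrac(){}{0}{p}{3}$ directly: modulo $3$ the nonzero residues are $1$ (a square) and $2$ (a nonsquare), so $\genfrac(){}{0}{p}{3}=1$ when $p\equiv 1\pmod 3$ and $\genfrac(){}{0}{p}{3}=-1$ when $p\equiv 2\pmod 3$. Since $p$ is odd, $p\equiv 1\pmod 3$ is equivalent to $p\equiv 1\pmod 6$, and $p\equiv 2\pmod 3$ is equivalent to $p\equiv 5\pmod 6$; combined with the case $p=3$ handled above, this yields the stated three-case formula. There is no real obstacle here — the only point requiring care is bookkeeping the $(-1)^{(p-1)/2}$ factors so that they cancel, and remembering to treat $p=3$ separately before invoking reciprocity.
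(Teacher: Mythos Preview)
Your proof is correct and follows essentially the same approach as the paper: handle $p=3$ by definition, split $\genfrac(){}{0}{-3}{p}=\genfrac(){}{0}{-1}{p}\genfrac(){}{0}{3}{p}$, apply the supplementary law and quadratic reciprocity so that the $(-1)^{(p-1)/2}$ factors cancel to give $\genfrac(){}{0}{-3}{p}=\genfrac(){}{0}{p}{3}$, and then evaluate this last symbol modulo~$3$ and translate to congruences modulo~$6$. There is no substantive difference from the paper's argument.
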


\begin{proof}
The result is given for $p=3$ by definition, so let $p\geq 5$. First,
\begin{equation*}
\genfrac(){}{0}{-3}{p}=\genfrac(){}{0}{-1}{p}\genfrac(){}{0}{3}{p}.
\end{equation*}
By the quadratic reciprocity law in Equation \eqref{eq:quadrec},
\begin{equation*}
\genfrac(){}{0}{3}{p}\genfrac(){}{0}{p}{3}=(-1)^{\frac{p-1}{2}}.
\end{equation*}
Since the value of the Legendre symbol, if it is nonzero, always squares to 1, we can rewrite the above as
\begin{equation*}
\genfrac(){}{0}{3}{p}=(-1)^{\frac{p-1}{2}}\genfrac(){}{0}{p}{3}.
\end{equation*}
Because
\begin{equation*}
\genfrac(){}{0}{-1}{p}=(-1)^\frac{p-1}{2},
\end{equation*}
we thus have
\begin{equation*}
\genfrac(){}{0}{-3}{p}=\genfrac(){}{0}{-1}{p}\genfrac(){}{0}{3}{p}=(-1)^\frac{p-1}{2}(-1)^{\frac{p-1}{2}}\genfrac(){}{0}{p}{3}=\genfrac(){}{0}{p}{3}=\left\lbrace\begin{array}{lr}
 1 & p\equiv 1\modd 3\\
 -1 & p\equiv 2\modd 3
\end{array}\right..
\end{equation*}
But $p\equiv 1\modd 3$ is equivalent to $p\equiv 1\modd 6$, while $p\equiv 2\modd 3$ is equivalent to $p\equiv 5\modd 6$. Therefore,
\begin{equation*}
\genfrac(){}{0}{-3}{p}=\left\lbrace\begin{array}{lr}
 1 & p\equiv 1\modd 6\\
 0 & p=3\\
 -1 & p\equiv 5\modd 6
\end{array}\right..
\end{equation*}

\end{proof}

\begin{proof}[Proof of Lemma \ref{lem:tau2}]
We wish to solve the congruence equation $x^2+x+1\equiv 0\modd p^a$. That is, we seek a solution to
\begin{equation*}
x^2+x+1-kp^a=0
\end{equation*}
for some integer $k$. Rewrite this equation as
\begin{equation*}
(2x+1)^2+3-4kp^a=0.
\end{equation*}
We thus seek solutions to $(2x+1)^2\equiv -3\modd p^a$. Writing $y:=2x+1$, we now wish to find $y$ such that $y^2\equiv -3\modd p^a$, which is asking for whether $-3$ is a quadratic residue modulo $p^a$. 

By Proposition \ref{prop:2}, if $p=3$ and $a=1$, then there is exactly one solution: $y\equiv 0\modd 3$, i.e., $x\equiv 1\modd 3$ solves $x^2+x+1\equiv 0\modd 3$. This shows $\tau_2(3)=1$. If $p=3$ and $a\geq 2$, then by Proposition \ref{prop:2}, $-3$ is quadratic nonresidue modulo $3^a$. This means $x^2+x+1\equiv 0\modd 3^a$ has no solutions, and hence $\tau_2(3^a)=0$ if $a\geq 2$. If $p\geq 5$, then by Proposition \ref{prop:1}, the number of solutions to $y^2\equiv -3\modd p^a$ is exactly $1+\genfrac(){}{0}{-3}{p}$. Then using Proposition \ref{prop:3}, we conclude that $y^2\equiv -3\modd p^a$ has two solutions if $p\equiv 1\modd 6$ or no solutions if $p\equiv 5\modd 6$. These are the claimed values of $\tau_2(p^a)$ for $p\geq 5$.
\end{proof}

We will also incorporate the following notation in the proofs of our main results.

\begin{definition}
Let $n=p^a$ for a prime $p$ and a positive integer $a$. If $a\geq 2$, define
\begin{equation*}
f(p^k)=\left\lbrace\begin{array}{lr}
 1 & k=0\\
 p^{k-1}(p-1) & 1\leq k\leq a-1\\
 p^{a-1}(p-2) & k=a
\end{array}\right..
\end{equation*}
If $a=1$, then define
\begin{equation*}
f(p^k)=\left\lbrace\begin{array}{lr}
 1 & k=0\\
 p-2 & k=1
\end{array}\right..
\end{equation*}
\end{definition}

The following lemma will be used many times in the computation of $QC(n)$.

\begin{lemma}
For odd prime $p$ and positive integer $a$,
\begin{enumerate}
\item $\displaystyle\sum_{k=0}^af(p^k)=p^{a-1}(p-1)$,
\item $\displaystyle\sum_{k=0}^{a-1}f(p^k)=p^{a-1}$,
\item $\displaystyle\sum_{k=1}^{a-1}f(p^k)=p^{a-1}-1$.
\end{enumerate}
\end{lemma}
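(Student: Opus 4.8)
The plan is to reduce all three identities to a single finite geometric-series computation, treating the degenerate case $a=1$ separately since there the index range $1\le k\le a-1$ is empty and the generic formula $f(p^k)=p^{k-1}(p-1)$ is never invoked. First I would prove (i) directly, and then obtain (ii) and (iii) from it by subtracting the single terms $f(p^a)$ and $f(p^0)$ respectively.

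For (i) with $a\ge 2$, I would split the sum as $\sum_{k=0}^a f(p^k)=f(p^0)+\sum_{k=1}^{a-1}f(p^k)+f(p^a)$. The middle block is $\sum_{k=1}^{a-1}p^{k-1}(p-1)=(p-1)\sum_{j=0}^{a-2}p^{j}=(p-1)\cdot\frac{p^{a-1}-1}{p-1}=p^{a-1}-1$ by the standard geometric sum. Adding $f(p^0)=1$ and $f(p^a)=p^{a-1}(p-2)$ gives $1+(p^{a-1}-1)+p^{a-1}(p-2)=p^{a-1}(p-1)$, which is the claim. For $a=1$ the middle block is empty, and one checks directly from the $a=1$ branch of the definition that $f(p^0)+f(p^1)=1+(p-2)=p-1=p^{0}(p-1)$.

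Then (ii) is immediate: for $a\ge 2$, $\sum_{k=0}^{a-1}f(p^k)=\left(\sum_{k=0}^{a}f(p^k)\right)-f(p^a)=p^{a-1}(p-1)-p^{a-1}(p-2)=p^{a-1}$, and for $a=1$ the left-hand side is $f(p^0)=1=p^{0}$. Finally (iii) follows from (ii) by removing the $k=0$ term: $\sum_{k=1}^{a-1}f(p^k)=\left(\sum_{k=0}^{a-1}f(p^k)\right)-f(p^0)=p^{a-1}-1$, with the empty sum for $a=1$ giving $0=p^{0}-1$. The argument is entirely routine; there is no real obstacle here, and the only point needing care is the bookkeeping around $a=1$, so I would state that case explicitly in each part to make the conventions for empty sums unambiguous.
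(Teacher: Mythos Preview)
Your proof is correct and follows essentially the same approach as the paper: both split the sum in (i) as $f(p^0)+\sum_{k=1}^{a-1}f(p^k)+f(p^a)$, evaluate the middle block as a geometric series to $p^{a-1}-1$, and then note that (ii) and (iii) follow immediately. Your explicit handling of the $a=1$ case is a small bit of extra care that the paper omits.
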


\begin{proof}
We prove part (i) only (since (ii) and (iii) follow easily from (i)).
\begin{align*}
\sum_{k=0}^a f(p^k)=1+\sum_{k=1}^{a-1}p^{k-1}(p-1)+p^{a-1}(p-2)&=1+p^{a-1}-1+p^{a-1}(p-2)\\
&=p^{a-1}(p-1).
\end{align*}
\end{proof}

For the following derivations of $QC(n)$, we will write the cyclic group $C_n$ as $\Z/n\Z$, the integers modulo $n$.

\section{Proof of Theorem \ref{thm:qceven}: $QC(n)$ for Even $n$}\label{sec:qceven}

Because Theorem \ref{thm:qceven} contains the formula for $QC(n)$ based on the exponent of two, we split up the proof of Theorem \ref{thm:qceven} into two cases: when $n=2^{a_1}\prod_{i=2}^r p_i^{a_i}$ for odd primes $p_i$ and positive integers $a_i$, where either $a_1=1$ or $a_1\geq 2$.

\subsection{When $n=2^{a_1}\prod_{i=2}^rp_i^{a_i}$ with $a_1=1$}

We prove first the following recursive formula by induction on the number $r$ of distinct primes.
\begin{theorem}\label{thm:recursive2}
For $n=2\cdot\prod_{i=2}^rp_i^{a_i}$,
\begin{equation*}
QC(n\cdot p_{r+1}^{a_{r+1}})=\big(QC(n)+1-2^{r-2}\big)p_{r+1}^{a_{r+1}-1}(p_{r+1}+1)-1+2^{r-1}.
\end{equation*}
\end{theorem}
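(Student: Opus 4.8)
The plan is to compute $QC(N)$ for $N = n\cdot p^{a}$, where I abbreviate $p := p_{r+1}$ and $a := a_{r+1}$ (note $p$ is odd and coprime to $n$), directly from the defining sum $QC(N) = \sum T(n_1,n_2,n_3)$ over admissible signatures, organizing the signatures of $N$ according to their prime-to-$p$ parts. Each period factors uniquely as $n_j = m_j p^{c_j}$ with $m_j \mid n$ and $0\le c_j\le a$. First I would verify that Harvey's three conditions (Theorem~\ref{thm:harvey}) for $(n_1,n_2,n_3)$ relative to $N$ are equivalent to the conjunction of: (a) $(m_1,m_2,m_3)$ being admissible for $n$; and (b) $\max_j c_j = a$, with this maximum attained by at least two of the $c_j$. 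The lcm conditions split cleanly under the coprime factorization, and the parity-of-$2$ condition depends only on the $m_j$ because $p$ is odd, so the largest power of $2$ dividing $N$ already divides $n$, and it divides $n_j$ iff it divides $m_j$. This yields a bijection between admissible signatures of $N$ and pairs (admissible signature $\underline m$ of $n$, admissible exponent triple $\underline c$), taken modulo the diagonal $S_3$-action permuting the coordinates of $\underline m$ and $\underline c$ simultaneously.

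Next I would rewrite the Benim--Wootton $T$-values (Theorem~\ref{thm:ben}) in multiplicative form. Using $\phi(\gcd)\prod_{q\mid\gcd}\tfrac{q-2}{q-1} = \prod_{q^e\,\|\,\gcd}q^{e-1}(q-2)$, together with the multiplicativity of $\tau_2$ (Lemmas~\ref{lem:tau2multi},~\ref{lem:tau2}) and the prime-by-prime description of $\tau_1$ (Lemma~\ref{lem:tau1}), each $T(n_1,n_2,n_3)$ factors, once the Benim--Wootton case (all $n_i$ distinct / exactly two equal / all equal) is fixed, as an $\underline m$-factor times a $p$-local factor in the $c_j$. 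The main source of work is that the case of the signature of $N$ is \emph{not} determined by that of $\underline m$: an equal pair among the $m_j$ can be separated by distinct $c_j$, while a distinct pair always stays distinct (since $p\nmid m_i$), so a two-equal $\underline m$ contributes to both two-equal and all-distinct signatures of $N$; moreover no admissible signature here is all-equal, since $2\,\|\,n$ forces an even period count in Harvey's condition~(iii). I would therefore fix $\underline m$ and split the sum over admissible $\underline c$ (modulo the stabilizer of $\underline m$) by the coincidence pattern of $(c_1,c_2,c_3)$; on each piece the $p$-local factor is built from $f(p^{c_j})$, and the summation identities $\sum_{k=0}^a f(p^k) = p^{a-1}(p-1)$, $\sum_{k=0}^{a-1}f(p^k) = p^{a-1}$, $\sum_{k=1}^{a-1}f(p^k) = p^{a-1}-1$ collapse the sums. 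The numerical coincidence that makes the computation work is $p^{a-1}(p-1) + 2p^{a-1} = p^{a-1}(p+1)$, which produces the factor $p^{a-1}(p+1)$ attached to $T_n(\underline m)$.

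Summing the resulting expression over all admissible $\underline m$ then gives $QC(n)\cdot p^{a-1}(p+1)$ from the leading terms, plus residuals coming from (i) the constant prefactors $\tfrac16,\tfrac12$ and the $\tau_1$-, $\tau_2$-contributions in cases (ii), (iii) of Theorem~\ref{thm:ben}, which occur only for the finitely many signatures $(n_1,n,n)$ with $n_1$ an odd divisor of $n/2$; and (ii) the handful of admissible signatures of genus $<2$, which satisfy Harvey's conditions but are not counted in $QC$. Collecting these residuals --- and carefully handling the over/undercounting of the symmetric signatures forced by the diagonal $S_3$-quotient --- is exactly where the constants $+1-2^{r-2}$ (inside the bracket) and $-1+2^{r-1}$ arise; the bracketed quantity $QC(n)+1-2^{r-2}$ is the "clean" multiplicative object (namely $\tfrac16 r(C_n)$) that scales by $p^{a-1}(p+1)$. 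The induction on $r$ enters only to supply the closed form of $QC(n)$ with $r$ primes; the base case $r=1$ (so $n=2$, $N=2p_2^{a_2}$, with admissible signatures $(q,2p_2^{a_2},2p_2^{a_2})$ for odd divisors $q\mid p_2^{a_2}$ subject to the genus constraint) is checked directly by the same count. I expect the bookkeeping of the residual terms --- in particular tracking how $\tau_1$ and $\tau_2$ behave under adjoining $p^a$, and confirming that every stray constant assembles into exactly $1-2^{r-2}$ and $-1+2^{r-1}$ --- to be the principal obstacle.
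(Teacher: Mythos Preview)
Your approach is essentially the paper's: extend admissible signatures of $C_n$ to signatures of $C_{n\cdot p^a}$ by adjoining $p$-powers to each period, split by the coincidence pattern among the periods (the paper labels the resulting cases 1a, 1b, 2a, 2b, 2c), sum the Benim--Wootton $T$-values over the new exponent via the identities $\sum_k f(p^k)$ to extract the factor $p^{a-1}(p+1)$, and then sum over signatures of $C_n$ to recover $QC(n)$ plus the stated constants. Two small corrections to watch: your base-case signature list for $N=2p_2^{a_2}$ omits the type $(2p_2^{k},\,p_2^{a_2},\,2p_2^{a_2})$, which carries roughly half the count; and since (as you correctly observe) $2\,\|\,n$ forbids all-equal signatures, case (iii) of Theorem~\ref{thm:ben} and $\tau_2$ never enter here and should drop from your residual bookkeeping.
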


We prove the base case when $r=2$ in Theorem \ref{prop:base} below. This means we consider $n=2p^aq^b$ for odd primes $p$ and $q$, and positive integers $a$ and $b$. Before we prove Theorem \ref{prop:base}, we discuss the main technique: extend an admissible signature of $C_{2p^a}$ to an admissible signature of $C_{2p^aq^b}$. 

The number of distinct topological actions of each signature type for $C_{2p^aq^b}$ is given in Table \ref{tab:2paqb} below. Each quantity in the right-hand column is computed by applying Theorem \ref{thm:ben} to each signature type. We have split the different signatures for quasiplatonic topological $C_{2p^aq^b}$-actions into five types: $1a, 1b, 2a, 2b,$ and $2c$. These are defined by whether new signatures are extended from the two admissible signatures for $C_{2p^a}$, given as the following two types:
\begin{align*}
&\text{Type 1: } (p^k,2p^a,2p^a)\text{ with } 1\leq k\leq a;\\
&\text{Type 2: }(2p^k,p^a,2p^a) \text{ with } 0\leq k\leq a-1.
\end{align*}
New signatures for $C_{2p^aq^b}$-actions are extended from the signatures for $C_{2p^a}$-actions by placing $q^h$ in each period of one of the above signatures. In particular:
\begin{itemize}
\item Type $1a$ signatures arise by extending the signatures $(p^k,2p^a,2p^a)$ to 

$(p^kq^h,2p^aq^b,2p^aq^b)$.
\item Type $1b$ signatures arise by extending the signatures $(p^k,2p^a,2p^a)$ to 

$(p^kq^b,2p^aq^h,2p^aq^b)$.
\item Type $2a$ signatures arise by extending the signatures $(2p^k,p^a,2p^a)$ to 

$(2p^kq^h,p^aq^b,2p^aq^b)$.
\item Type $2b$ signatures arise by extending the signatures $(2p^k,p^a,2p^a)$ to 

$(2p^kq^b,p^aq^h,2p^aq^b)$.
\item Type $2c$ signatures arise by extending the signatures $(2p^k,p^a,2p^a)$ to 

$(2p^kq^b,p^aq^b,2p^aq^h)$.
\end{itemize}
The possible ranges $k$ and $h$ for the exponents of $p$ and $q$, respectively, must be considered carefully when computing the $T$-values of the above five signature types.
\begin{figure}
\begin{center}
\begin{tabular}{|l|}\hline
Type $1a$ Signatures $(p^kq^h,2p^aq^b,2p^aq^b)$ \\ \hline
\end{tabular}
\begin{tabular}{|l|l|} \hline
Ranges & Number of Topological Actions $T$ \\ \hline \hline
$k=0,1\leq h\leq b-1$ & $\frac{1}{2}q^{h-1}(q-1)$ \\ \hline
$k=0,h=b$ & $\frac{1}{2}(1+q^{b-1}(q-2))$ \\ \hline
$1\leq k\leq a-1,h=0$ & $\frac{1}{2}p^{k-1}(p-1)$ \\ \hline
$1\leq k\leq a-1,1\leq h\leq b-1$ & $\frac{1}{2}p^{k-1}(p-1)q^{h-1}(q-1)$ \\ \hline
$1\leq k\leq a-1,h=b$ & $\frac{1}{2}p^{k-1}(p-1)q^{b-1}(q-2)$ \\ \hline
$k=a,h=0$ & $\frac{1}{2}(1+p^{a-1}(p-2))$ \\ \hline
$k=a,1\leq h\leq b-1$ & $\frac{1}{2}p^{a-1}(p-2)q^{h-1}(q-1)$ \\ \hline
$k=a,h=b$ & $\frac{1}{2}(1+p^{a-1}(p-2)q^{b-1}(q-2))$ \\ \hline
\end{tabular}

\begin{tabular}{|l|}\hline
Type $1b$ Signatures $(p^kq^b,2p^aq^h,2p^aq^b)$ \\ \hline
\end{tabular}
\begin{tabular}{|l|l|} \hline
Ranges & Number of Topological Actions $T$ \\ \hline \hline
$k=0,h=0$ & $1$ \\ \hline
$k=0,1\leq h\leq b-1$ & $q^{h-1}(q-1)$ \\ \hline
$1\leq k\leq a-1,h=0$ & $p^{k-1}(p-1)$ \\ \hline
$1\leq k\leq a-1,1\leq h\leq b-1$ & $p^{k-1}(p-1)q^{h-1}(q-1)$ \\ \hline
$k=a,h=0$ & $p^{a-1}(p-2)$ \\ \hline
$k=a,1\leq h\leq b-1$ & $p^{a-1}(p-2)q^{h-1}(q-1)$ \\ \hline
\end{tabular}

\begin{tabular}{|l|}\hline
Type $2a$ Signatures $(2p^kq^h,p^aq^b,2p^aq^b)$ \\ \hline
\end{tabular}
\begin{tabular}{|l|l|} \hline
Ranges & Number of Topological Actions $T$ \\ \hline \hline
$k=0,h=0$ & $1$ \\ \hline
$k=0,1\leq h\leq b-1$ & $q^{h-1}(q-1)$ \\ \hline
$k=0,h=b$ & $q^{b-1}(q-2)$ \\ \hline
$1\leq k\leq a-1,h=0$ & $p^{k-1}(p-1)$ \\ \hline
$1\leq k\leq a-1,1\leq h\leq b-1$ & $p^{k-1}(p-1)q^{h-1}(q-1)$ \\ \hline
$1\leq k\leq a-1,h=b$ & $p^{k-1}(p-1)q^{b-1}(q-2)$ \\ \hline
\end{tabular}

\begin{tabular}{|l|}\hline
Type $2b$ Signatures $(2p^kq^b,p^aq^h,2p^aq^b)$ \\ \hline
\end{tabular}
\begin{tabular}{|l|l|} \hline
Ranges & Number of Topological Actions $T$ \\ \hline \hline
$k=0,h=0$ & $1$ \\ \hline
$k=0,1\leq h\leq b-1$ & $q^{h-1}(q-1)$ \\ \hline
$1\leq k\leq a-1,h=0$ & $p^{k-1}(p-1)$ \\ \hline
$1\leq k\leq a-1,1\leq h\leq b-1$ & $p^{k-1}(p-1)q^{h-1}(q-1)$ \\ \hline
\end{tabular}

\begin{tabular}{|l|}\hline
Type $2c$ Signatures $(2p^kq^b,p^aq^b,2p^aq^h)$ \\ \hline
\end{tabular}
\begin{tabular}{|l|l|} \hline
Ranges & Number of Topological Actions $T$ \\ \hline \hline
$k=0,h=0$ & $1$ \\ \hline
$k=0,1\leq h\leq b-1$ & $q^{h-1}(q-1)$ \\ \hline
$1\leq k\leq a-1,h=0$ & $p^{k-1}(p-1)$ \\ \hline
$1\leq k\leq a-1,1\leq h\leq b-1$ & $p^{k-1}(p-1)q^{h-1}(q-1)$ \\ \hline
\end{tabular}
\end{center}
\caption{The $T$-values for the five signature types for quasiplatonic topological $C_{2p^aq^b}$-actions.}
\label{tab:2paqb}
\end{figure}

Any extensions of the signatures of type $1a,1b,2a,2b,$ or $2c$ by a prime power (such as extending $2p^a$ signatures to $2p^aq^b$ signatures by multiplying each period by $q^h$) will be denoted as a new signature of the same type for the larger cyclic action. This motivates the following definition.

\begin{definition}
For $n=2p^a$, let $T^i_{k,2p^a}$ be the \emph{$T$-value for a type $i$ signature} for $C_n$ for $i=1$ or $2$. Now consider $n=2p_2^{a_2}\cdots p_r^{a_r}$ with $r\geq 3$ and a given admissible signature for $C_n$ of Type $i$ for $i=$ 1a, 1b, 2a, 2b, or 2c. Then we let
\begin{equation*}
T^i_{k_1,\ldots,k_r,n}
\end{equation*}
be the \emph{$T$-value for a type $i$ signature} for $C_n$.
\end{definition}

If $n=2p^a$, then the $T$-values for signatures for $C_n$ are as follows:
\begin{equation*}
T^1_{k,2p^a}=\left\lbrace\begin{array}{lr}
 0 & k=0\\
 \frac{1}{2}f(p^k) & 1\leq k\leq a-1\\
 \frac{1}{2}f(p^a)+\frac{1}{2} & k=a
\end{array}\right.
\end{equation*}
and, for $0\leq k\leq a-1$,
\begin{equation*}
T^2_{k,2p^a}=f(p^k).
\end{equation*}
From the above definition, it follows that, for $r\geq 3$ and $n=2\prod_{i=2}^r p_i^{a_i}$,
\begin{equation*}
QC(n)=\sum_i\sum_{k_1,\ldots,k_r}T^i_{k_1,\ldots,k_r,n},
\end{equation*}
where the $i$ index ranges over all five admissible signature types 1a, 1b, 2a, 2b, and 2c, and the indices $k_1,\ldots,k_r$ run over all possible ranges. If $r=2$ so that $n=2p^a$, we have
\begin{equation*}
QC(2p^a)=\sum_{k=1}^{a}\left(T^1_{k,2p^a}\right)+\sum_{k=0}^{a-1}\left(T^2_{k,2p^a}\right).
\end{equation*}

We now show the recursive formula Theorem \ref{thm:recursive2} when $r=2$. 

\begin{theorem}\label{prop:base}
\begin{equation*}
QC(2p^aq^b)=QC(2p^a)\cdot q^{b-1}(q+1)+1.
\end{equation*}
\end{theorem}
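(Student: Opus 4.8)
The plan is to compute $QC(2p^aq^b)$ directly by summing the $T$-values over all five signature types $1a, 1b, 2a, 2b, 2c$ listed in Table \ref{tab:2paqb}, using the function $f$ and the summation identities in the lemma immediately preceding this section, and then to recognize the resulting expression as $QC(2p^a)\cdot q^{b-1}(q+1)+1$. First I would fix notation: write $S_1 = \sum_{k=1}^a T^1_{k,2p^a} = \sum_{k=1}^{a-1}\tfrac12 f(p^k) + \tfrac12 f(p^a) + \tfrac12$ and $S_2 = \sum_{k=0}^{a-1} T^2_{k,2p^a} = \sum_{k=0}^{a-1} f(p^k)$, so that $QC(2p^a) = S_1 + S_2$. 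Using parts (ii) and (iii) of the lemma, $S_2 = p^{a-1}$ and $\sum_{k=1}^{a-1}\tfrac12 f(p^k) = \tfrac12(p^{a-1}-1)$, and $\tfrac12 f(p^a) = \tfrac12 p^{a-1}(p-2)$; collecting these gives a clean closed form for $QC(2p^a)$ that will be the reference point for the final comparison.

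Next I would go table by table. For each of the five types, the rows of Table \ref{tab:2paqb} split according to the ranges of $k$ (the exponent of $p$, with $k=0$, $1\le k\le a-1$, or $k=a$ for types $1a,1b$; and $k=0$ or $1\le k\le a-1$ for types $2a,2b,2c$) and of $h$ (the exponent of $q$, with $h=0$, $1\le h\le b-1$, or $h=b$ where it occurs). The key observation driving the whole computation is that in every row the $T$-value factors as (a $p$-part depending only on $k$) times (a $q$-part depending only on $h$), up to the $\tfrac12(1+\cdots)$ corrections that appear precisely when two of the three periods in the signature coincide — i.e. exactly the rows realizing the "$\tfrac12(\tau_1 + \phi\prod\frac{p_i-2}{p_i-1})$" case of Theorem \ref{thm:ben}. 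So for each fixed type I would factor the double sum over $k$ and $h$ into a product of a sum over $k$ (which reassembles into the $2p^a$-level $T$-sum for that type) and a sum over $h$ of the form $1 + \sum_{h=1}^{b-1} q^{h-1}(q-1) + q^{b-1}(q-2)$ or a truncation thereof; by part (i) of the lemma the first of these equals $q^{b-1}(q-1)$, and including the $h=0$ term $1$ gives $\sum_{h=0}^{b-1} q^{h-1}(q-1) + \text{(top term)}$. I would track carefully that $\sum_{h=0}^{b-1}$-type sums yield $q^{b-1}$ and, combined with the $h=b$ contributions, produce the factor $q^{b-1}(q+1) = q^b + q^{b-1}$; the "$+1$" in the final formula comes from the single anomalous row ($k=0,h=0$ of type $1b$, say, or the sporadic $\tfrac12(1+\cdots)$ terms not absorbed into a product) that does not carry a $q$-factor.

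Having summed all five types, I would add them up and regroup the total as $\bigl(\text{sum of the five } 2p^a\text{-level type-sums}\bigr)\cdot q^{b-1}(q+1)$ plus a leftover constant, and then check that the bracketed quantity is exactly $QC(2p^a)$ (using the formulas $QC(2p^a) = S_1 + S_2$ above, together with the observation that types $1a$ and $1b$ between them reassemble the Type 1 contribution and types $2a, 2b, 2c$ the Type 2 contribution at the $2p^a$ level) and that the leftover constant is $1$. The main obstacle I expect is bookkeeping: correctly matching the boundary rows (the $k=a$ and $h=b$ rows, and the special $\tfrac12(1+\cdots)$ entries) so that the factorization into a $p$-sum times a $q$-sum is exact, and verifying that the various "$+\tfrac12$" and "$+1$" fragments scattered across the tables combine to give precisely a single "$+1$" rather than some other constant — this is where an off-by-a-constant error would most easily creep in, so I would double-check it by testing the formula against a small explicit case such as $2\cdot 3^2\cdot 5$ using Theorem \ref{thm:ben} and Corollary \ref{cor:primepower}.
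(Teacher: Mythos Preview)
Your plan is correct and is essentially the paper's own approach: both compute $QC(2p^aq^b)$ by running through the five signature types in Table~\ref{tab:2paqb}, summing the $T$-values first over $h$ for each fixed $k$ (so that the $q$-sums collapse via the identities $\sum_{h=0}^{b}f(q^h)=q^{b-1}(q-1)$ and $\sum_{h=0}^{b-1}f(q^h)=q^{b-1}$), rewriting the result in terms of $T^1_{k,2p^a}$ and $T^2_{k,2p^a}$, and then summing over $k$ to recover $QC(2p^a)\cdot q^{b-1}(q+1)$ plus a residual constant. Your anticipation that the delicate part is the bookkeeping of the boundary rows (the $\tfrac12$-corrections at $k=0,a$ and $h=0,b$ in type~$1a$, and the $k=0$ row of type~$1b$) is exactly right; in the paper these fragments combine as $\tfrac12 q^{b-1}(q-1)+q^{b-1}+1-\tfrac12 q^{b-1}(q-1)-q^{b-1}=1$, confirming the ``$+1$''.
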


\begin{proof}
Let $k$ and $h$ be parameters corresponding to $p^k$ and $q^h$, respectively. For a fixed $k$, we add all $T$-values for admissible ranges for $h$ . Then, we sum together $T$-values over all admissible ranges for $k$ to obtain $QC(2p^aq^b)$ in terms of $T^1_{k,2p^a}$ and $T^2_{k,2p^a}$. We need to consider each of the five types of signatures for $C_{2p^aq^b}$-actions: $1a,1b,2a,2b,$ and $2c$. For the $T$-values of each signature, refer to Table \ref{tab:2paqb}.

\underline{For Type $2a$}: The possible range for $k$ for a type $2a$ signature is $0\leq k\leq a-1$. For each such fixed $k$,
\begin{equation*}
T(2p^kq^h,p^aq^b,2p^aq^b)=f(p^k)f(q^h),
\end{equation*}
where $0\leq h\leq b$. Thus, the total number of topological actions for a fixed $k$ in this range and all $h$ for signatures of type $2a$ is
\begin{align*}
f(p^k)\left(\sum_{h=0}^{b}f(q^h)\right)=f(p^k)q^{b-1}(q-1)=T^2_{k,2p^a}\cdot q^{b-1}(q-1).
\end{align*}

\underline{For Type $2b$ or $2c$}: Signatures of types 2b and 2c have the same possible $k$ ranges ($0\leq k\leq a-1$) and the same $h$ ranges ($0\leq h\leq b-1$). Their $T$-values are all $f(p^k)f(q^h)$. It follows that the total number of topological actions for a fixed $k$ and all $h$ for signatures of type 2b or 2c is
\begin{align*}
f(p^k)\left(\sum_{h=0}^{b-1}f(q^h)\right)=f(p^k)q^{b-1}=T^2_{k,2p^a}\cdot q^{b-1}.
\end{align*}

\underline{For Type $1b$}: The range for $k$ is $0\leq k\leq a$, while $h$ can range as $0\leq h\leq b-1$. Because we are considering extensions of signatures of type 1, the $T$-value $T^1_{k,2p^a}$ depends on the parameter $k$. The $T$-value for type $1b$ signatures is $f(p^k)f(q^h)$. Then the total number of topological actions for a fixed $k$ and all $h$ for signatures of type $1b$ is
\begin{align*}
f(p^k)\left(\sum_{h=0}^{b-1}f(q^h)\right)&=f(p^k)q^{b-1}=\left\lbrace\begin{array}{lr}
 q^{b-1} & k=0\\
 T^1_{k,2p^a}\cdot 2q^{b-1} & 1\leq k\leq a-1\\
 T^1_{a,2p^a}\cdot 2q^{b-1}-q^{b-1} & k=a
\end{array}\right..
\end{align*}

\underline{For Type $1a$}: Signatures of type $1a$ are the most difficult to consider, because their $T$-values require a different formula to compute, according to Theorem \ref{thm:ben}. In this case, there are two subcases to consider: when $1\leq k\leq a-1$, and either $k=0$ or $k=a$. This must be taken into account, because $T^1_{k,2p^a}$ depends on these ranges for $k$.
\begin{itemize}
\item If $1\leq k\leq a-1$, then the $T$-value of type $1a$ signatures is $(1/2)f(p^k)f(q^h)$. Then ranging over all possible $h$, the total number of topological actions of type $1a$ signatures with fixed $k$ in the given range is
\begin{align*}
\frac{1}{2}f(p^k)\left(\sum_{h=0}^b f(q^h)\right)=\frac{1}{2}f(p^k)q^{b-1}(q-1)=T^1_{k,2p^a}\cdot q^{b-1}(q-1).
\end{align*}

\item If $k=0$, then $h\neq 0$, since otherwise, we would be calculating the $T$-value for the signature $(p^0q^0,2p^aq^b,2p^aq^b)$, which is zero. Refer to Table \ref{tab:2paqb} for the $T$-values of type $1a$ signatures when $k=0$. The total number of topological actions of type 1a signatures with $k=0$ is then
\begin{align*}
f(p^0)\left(\sum_{h=1}^{b-1}\left(\frac{1}{2}f(q^h)\right)+\frac{1}{2}f(q^b)+\frac{1}{2}\right)&=\frac{1}{2}\cdot 1\cdot\left(q^{b-1}-1+q^{b-1}(q-2)+1\right)\\
&=\frac{1}{2}q^{b-1}(q-1).
\end{align*}
A $T$-value of a type 1 signature for $C_{2p^a}$ cannot be incorporated in this case, since $(p^0,2p^a,2p^a)$ is not an admissible signature for $C_{2p^a}$-actions.

\item If $k=a$, then $h$ may range as $0\leq h\leq b$. When $h$ is in the range $1\leq h\leq b-1$, we do not need an extra $\frac{1}{2}$ term, since $f(q^h)$ is even. In particular, if $h=0$ or $h=a$, we will need the extra $\frac{1}{2}$ term, since then both $f(q^h)$ and $f(p^a)$ are odd. Refer to Table \ref{tab:2paqb} for the $T$-values of type $1a$ signatures when $k=a$. The total number of topological actions of type $1a$ signatures with $k=a$ is then
\begin{align*}
\frac{1}{2}f(p^a)&+\frac{1}{2}+f(p^a)\left(\sum_{h=1}^{b-1}\frac{1}{2}f(q^h)\right)+\frac{1}{2}f(q^b)f(p^a)+\frac{1}{2}\\
&\quad=1+\frac{1}{2}f(p^a)+\frac{1}{2}f(p^a)\left(q^{b-1}(q-1)\right)+\frac{1}{2}f(p^a)q^{b-1}(q-2)\\
&\quad=1+\frac{1}{2}f(p^a)q^{b-1}(q-1)\\
&\quad=T^1_{a,2p^a}\cdot q^{b-1}(q-1)+1-\frac{1}{2}q^{b-1}(q-1).
\end{align*}
\end{itemize}

\begin{figure}
\begin{center}
\begin{tabular}{|l|l|l|}\hline
Type & Ranges & Topological Actions of $2p^aq^b$ for fixed $k$ \\ \hline
$1a$ & $k=0$ & $\frac{1}{2}q^{b-1}(q-1)$ \\ \hline
$1a$ & $1\leq k\leq a-1$ & $T^1_{k,2p^kq^h}\cdot q^{b-1}(q-1)$ \\ \hline
$1a$ & $k=a$ & $T^1_{a,2p^a}\cdot q^{b-1}(q-1)+1-\frac{1}{2}q^{b-1}(q-1)$ \\ \hline
$1b$ & $k=0$ & $q^{b-1}$ \\ \hline
$1b$ & $1\leq k\leq a-1$ & $T^1_{k,2p^aq^b}\cdot 2q^{b-1}$ \\ \hline
$1b$ & $k=a$ & $T^1_{a,2p^aq^b}\cdot 2q^{b-1}-q^{b-1}$ \\ \hline
$2a$ & $0\leq k\leq a-1$ & $T^2_{k,2p^a}\cdot q^{b-1}(q-1)$ \\ \hline
$2b$ & $0\leq k\leq a-1$ & $T^2_{k,2p^aq^b}\cdot q^{b-1}$ \\ \hline
$2c$ & $0\leq k\leq a-1$ & $T^2_{k,2p^aq^b}\cdot q^{b-1}$ \\ \hline
\end{tabular}
\end{center}
\caption{Quasiplatonic topological $C_{2p^aq^b}$-actions for ranges of $k$.}
\label{tab:2paqbfixedk}
\end{figure}

Table \ref{tab:2paqbfixedk} summarizes the calculations from above. Finally, adding all the $T$-values for possible $k$-ranges, $QC(2p^aq^b)$ is computed to be
\begin{align*}
QC(2p^aq^b)&=\frac{1}{2}q^{b-1}+q^{b-1}+\left(\sum_{k=1}^{a-1}T^1_{k,2p^a}\right)\left(q^{b-1}(q-1)+2q^{b-1}\right)\\
&\quad +T^1_{a,2p^a}\left(q^{b-1}(q-1)+2q^{b-1}\right)+1-\frac{1}{2}q^{b-1}(q-1)-q^{b-1}\\
&\quad +\left(\sum_{k=0}^{a-1}T^2_{k,2p^a}\right)\left(q^{b-1}(q-1)+2q^{b-1}\right)\\
&=\left(\sum_{k=1}^{a}\left(T^1_{k,2p^a}\right)+\sum_{k=0}^{a-1}\left(T^2_{k,2p^a}\right)\right)q^{b-1}(q+1)+1\\
&=QC(2p^a)\cdot q^{b-1}(q+1)+1.
\end{align*}

\end{proof}

Proving the recursive formula 
\begin{equation*}
QC(n\cdot p_{r+1}^{a_{r+1}})=\left(QC(n)-2^{r-2}+1\right)p_{r+1}^{a_{r+1}-1}(p_{r+1}+1)+2^{r-1}-1
\end{equation*}
of Theorem \ref{thm:recursive2} will then allow us to derive $QC(n)$ for $n=2\prod_{i=2}^r p_i^{a_i}$.

\begin{proof}[Proof of Theorem \ref{thm:recursive2}]
The base case of the formula is proven already in Theorem \ref{prop:base} when $r=2$. So assume $r\geq 3$. We use the same technique as before: we extend signatures for $C_n$ to signatures for $C_{n\cdot p_{r+1}^{a_{r+1}}}$. This is done by multiplying each period of a signature for $C_n$ by $p_{r+1}^t$, for some nonnegative integer $t\leq a_{r+1}$. The signatures for $C_n$ are of the form
\begin{equation*}
\left(2^{k_1}p_2^{k_2}\cdots p_r^{k_r},2^{\ell_1}p_2^{\ell_2}\cdots p_r^{\ell_r},2^{h_1}p_2^{h_2}\cdots p_r^{h_r}\right)
\end{equation*}
for nonnegative integers $k_i,\ell_i$ and $h_i$. The ranges for the exponents is determined by Harvey's Theorem. In particular, each of $k_i,\ell_i, $ and $h_i$ are at most $a_{r+1}$.

Consider the $T$-value of an extension of a signature for $C_n$ by the odd prime $p_{r+1}^{a_{r+1}}$,i.e., the $T$-value of a signature for $C_{n\cdot p_{r+1}^{a_{r+1}}}$-actions. Signatures for $C_{n\cdot p_{r+1}^{a_{r+1}}}$-actions consist of the following three forms:
\begin{align}
&\left(2^{k_1}p_2^{k_2}\cdots p_r^{k_r}p_{r+1}^t,2^{\ell_1}p_2^{\ell_2}\cdots p_r^{\ell_r}p_{r+1}^{a_{r+1}},2^{h_1}p_2^{h_2}\cdots p_r^{h_r}p_{r+1}^{a_{r+1}}\right)\label{eq:sign1}, 0\leq t\leq a_{r+1}\\
&\left(2^{k_1}p_2^{k_2}\cdots p_r^{k_r}p_{r+1}^{a_{r+1}},2^{\ell_1}p_2^{\ell_2}\cdots p_r^{\ell_r}p_{r+1}^{t},2^{h_1}p_2^{h_2}\cdots p_r^{h_r}p_{r+1}^{a_{r+1}}\right)\label{eq:sign2},0\leq t\leq a_{r+1}-1\\
&\left(2^{k_1}p_2^{k_2}\cdots p_r^{k_r}p_{r+1}^{a_{r+1}},2^{\ell_1}p_2^{\ell_2}\cdots p_r^{\ell_r}p_{r+1}^{a_{r+1}},2^{h_1}p_2^{h_2}\cdots p_r^{h_r}p_{r+1}^{t}\right)\label{eq:sign3}0\leq t\leq a_{r+1}-1.
\end{align}
By Harvey's Theorem, exactly two of $k_1,\ell_1,$ or $h_1$ (which are the exponents of 2 in the signatures above) must be equal to one. The remaining exponents for $p_i$ are determined similarly by Harvey's Theorem: at least two of $k_i,\ell_i$ or $h_i$ are equal to their maximum power, $a_i$. Because $T$-values of signatures differ depending on whether two periods are equal or not according to Theorem \ref{thm:ben}, there are two cases to consider: when the exponents $\ell_1,\ldots,\ell_r$ and $h_1,\ldots,h_r$ are either equal or not all equal.

First suppose that the exponents $\ell_1,\ldots,\ell_r$ and $h_1,\ldots,h_r$ are not all equal. Then the $T$-value of an admissible signature for $C_n$ of the form
\begin{equation*}
\left(2^{k_1}p_2^{k_2}\cdots p_r^{k_r},2^{\ell_1}p_2^{\ell_2}\cdots p_r^{\ell_r},2^{h_1}p_2^{h_2}\cdots p_r^{h_r}\right)
\end{equation*}
is given by $\prod_{i=2}^rf(p_i^{k_i})$. That is,
\begin{equation*}
T^2_{k_2,\ldots,k_r,n}=\prod_{i=2}^rf(p_i^{k_i}).
\end{equation*}
Hold $k_i$ fixed for $i=2,\ldots, r$. The $T$-values of the signatures \eqref{eq:sign1}-\eqref{eq:sign3} with their respective $t$-ranges is then given as
\begin{align*}
T&\left(2^{k_1}p_2^{k_2}\cdots p_r^{k_r}p_{r+1}^t,2^{\ell_1}p_2^{\ell_2}\cdots p_r^{\ell_r}p_{r+1}^{a_{r+1}},2^{h_1}p_2^{h_2}\cdots p_r^{h_r}p_{r+1}^{a_{r+1}}\right)\\
&=\prod_{i=2}^r\left(f(p_i^{k_i})\right)\cdot \sum_{t=0}^{a_{r+1}}f(p_{r+1}^t),\\
T&\left(2^{k_1}p_2^{k_2}\cdots p_r^{k_r}p_{r+1}^{a_{r+1}},2^{\ell_1}p_2^{\ell_2}\cdots p_r^{\ell_r}p_{r+1}^{t},2^{h_1}p_2^{h_2}\cdots p_r^{h_r}p_{r+1}^{a_{r+1}}\right)\\
&=T\left(2^{k_1}p_2^{k_2}\cdots p_r^{k_r}p_{r+1}^{a_{r+1}},2^{\ell_1}p_2^{\ell_2}\cdots p_r^{\ell_r}p_{r+1}^{a_{r+1}},2^{h_1}p_2^{h_2}\cdots p_r^{h_r}p_{r+1}^{t}\right)\\
&=\prod_{i=2}^r\left(f(p_i^{k_i})\right)\cdot \sum_{t=0}^{a_{r+1}-1}f(p_{r+1}^t).\\
\end{align*}
In other words, the $T$-values of the signatures \eqref{eq:sign1}-\eqref{eq:sign3} is multiplicative. Summing over these $T$-values, the total number of topological actions of $n\cdot p_{r+1}^{a_{r+1}}$ for fixed $k_2,\ldots,k_r$, in this case, is thus
\begin{align}
&\prod_{i=2}^rf(p_i^{k_i})\cdot\left(\sum_{t=0}^{a_{r+1}}(f(p_{r+1}^t))+2\sum_{t=0}^{a_{r+1}-1}(f(p_{r+1}^t))\right)\nonumber\\
&\quad =T^2_{k_2,\ldots,k_r,n}\cdot\left(1+\sum_{t=1}^{a_{r+1}-1}(p_{r+1}^t)+p_{r+1}^{a_{r+1}-1}+2+2\sum_{t=1}^{a_{r+1}-1}(p_{r+1}^t)\right)\nonumber\\
&\quad =T^2_{k_2,\ldots,k_r,n}\cdot\left(1+p_{r+1}^{a_{r+1}-1}-1+p_{r+1}^{a_{r+1}-1}+2+2(p_{r+1}^{a_{r+1}-1}-1)\right)\nonumber\\
&\quad =T^2_{k_2,\ldots,k_r,n}\cdot\left(p_{r+1}^{a_{r+1}-1}(p_{r+1}+1)\right)\label{eq:case2}.
\end{align}

Next, suppose that the exponents $\ell_1,\ldots,\ell_r$ and $h_1,\ldots,h_r$ are all equal. By Harvey's Theorem, each of these exponents achieves its maximum bound, i.e., $\ell_i=h_i=a_i$. This also implies that $k_1=0$, since we cannot have $k_1=\ell_1=h_1=a_1=1$ (otherwise, the three periods of a signature would have the highest power of two dividing $n$, contrary to Harvey's Theorem). Hence, the signatures for $C_n$ that we are extending are of the form
\begin{equation}\label{eq:signallequal}
\left(p_2^{k_2}\cdots p_r^{k_r},2p_2^{a_2}\cdots p_r^{a_r},2p_2^{a_2}\cdots p_r^{a_r}\right).
\end{equation}
We thus consider signatures for $C_{n\cdot p_{r+1}^{a_{r+1}}}$ of the following two possible extended forms:
\begin{align}
&\left(p_2^{k_2}\cdots p_r^{k_r}p_{r+1}^t,2p_2^{a_2}\cdots p_r^{a_r}p_{r+1}^{a_{r+1}},2p_2^{a_2}\cdots p_r^{a_r}p_{r+1}^{a_{r+1}}\right)\label{eq:sign4}, 0\leq t\leq a_{r+1}\\
&\left(p_2^{k_2}\cdots p_r^{k_r}p_{r+1}^{a_{r+1}},2p_2^{a_2}\cdots p_r^{a_r}p_{r+1}^{t},2p_2^{a_2}\cdots p_r^{a_r}p_{r+1}^{a_{r+1}}\right)\label{eq:sign5},0\leq t\leq a_{r+1}-1.
\end{align}
There is no third possible signature in this case, because the second and third periods of the signatures \eqref{eq:signallequal} for $C_n$ are now equal. Equivalently, extending by $t$ in the third component of \eqref{eq:signallequal} would be identical to those extensions by $t$ in the second component, given by \eqref{eq:sign5}.

Signatures of the form \eqref{eq:sign4} have two equal periods. Hence, $T$-values will incur factors of $1/2$ depending on whether all the $k_i$ ranges are maximal (i.e., $k_i=a_i$ or $k_i=0$ for all $i=2,\ldots,r$) or at least one $k_i$ falls in a mid-range (i.e., $1\leq k_i\leq a_i-1$ for some $i$). We must consider these cases separately.


\underline{If $1\leq k_i\leq a_i-1$ for some $2\leq i\leq r$}: The $T$-value of an admissible signature for $C_n$ of the form as in \eqref{eq:signallequal} is $(1/2)\cdot\prod_{i=2}^rf(p_i^{k_i})$. An identical formula also holds when two periods for a signature of $n\cdot p_{r+1}^{a_{r+1}}$ are equivalent. It follows that
\begin{equation*}
T^1_{k_2,\ldots,k_r,n}=\frac{1}{2}\prod_{i=2}^rf(p_i^{k_i}).
\end{equation*}

First, consider signatures of the form \eqref{eq:sign4} above, and hold each $k_i$ fixed for $i=2,\ldots,r$. The $T$-value of this signature type is
\begin{equation*}
T\left(p_2^{k_2}\cdots p_r^{k_r}p_{r+1}^t,2p_2^{a_2}\cdots p_r^{a_r}p_{r+1}^{a_{r+1}},2p_2^{a_2}\cdots p_r^{a_r}p_{r+1}^{a_{r+1}}\right)=\frac{1}{2}\prod_{i=2}^{r}f(p_i^{k_i})\cdot f(p_{r+1}^t).
\end{equation*}
Then summing all $T$-values over possible $t$ ranges and using a similar calculation as in the previous case above, the total number of topological actions of $C_{n\cdot p_{r+1}^{a_{r+1}}}$ of the form \eqref{eq:sign4} is
\begin{align}
\frac{1}{2}\prod_{i=2}^rf(p_i^{k_i})\left(\sum_{t=0}^{a_{r+1}}f(p_{r+1}^t)\right)=T^1_{k_2,\ldots,k_r,n}\cdot p_{r+1}^{a_{r+1}-1}(p_{r+1}-1).\label{eq:case1part1}
\end{align}

Now consider signatures of the form \eqref{eq:sign5}. These signatures have all distinct periods, and hence no factor of $1/2$ occurs in their $T$-values. In other words, the $T$-value of these signatures is identical to the $T$-value computed in the previous signature type above, but without the $1/2$ factor. However, when $p_{r+1}^{a_{r+1}}$ is removed from each period, the resulting signature for $C_n$ will have two equal periods. In order to incorporate $T^1_{k_2,\ldots,k_r,n}$, we will need to adjust our resulting $T$-value sum by multiplying and diving by $2$. Holding each $k_i$ fixed for $i=2,\ldots,r$ and summing over the possible $t$ ranges, the total number of topological actions for $C_{n\cdot p_{r+1}^{a_{r+1}}}$ of the form \eqref{eq:sign5} is then
\begin{align}
\prod_{i=2}^rf(p_i^{k_i})\left(\sum_{t=0}^{a_{r+1}-1}f(p_{r+1}^t)\right)&=\frac{1}{2}\prod_{i=2}^rf(p_i^{k_i})\cdot 2p_{r+1}^{a_{r+1}-1}\nonumber\\
&=T^1_{k_2,\ldots,k_r,n}\cdot 2p_{r+1}^{a_{r+1}-1}\label{eq:case1part2}.
\end{align}
Summing \eqref{eq:case1part1} and \eqref{eq:case1part2} gives us the number of distinct quasiplatonic topological actions of $C_{n\cdot p_{r+1}^{a_{r+1}}}$ in this case as 
\begin{equation}
T^1_{k_2,\ldots,k_r,n}\cdot p_{r+1}^{a_{r+1}-1}(p_{r+1}+1)\label{eq:case1total1}.
\end{equation}

\underline{If none of $k_i$ satisfy $1\leq k_i\leq a_i-1$ for $2\leq i\leq r$}: This case requires several possible subcases to consider.
\begin{itemize}
\item Suppose all $k_i=0$ for $i=2,\ldots,r$. Signatures of the form \eqref{eq:sign4} will then become
\begin{equation}\label{eq:signallzero}
\left(p_{r+1}^t,2p_2^{a_2}\cdots p_r^{a_r}p_{r+1}^{a_{r+1}},2p_2^{a_2}\cdots p_r^{a_r}p_{r+1}^{a_{r+1}}\right).
\end{equation}
The range for $t$ here is $1\leq t\leq a_{r+1}$. We cannot have $t=0$, as then the signature \eqref{eq:signallzero} becomes $(1,2^{a_1}p_2^{a_2}\cdots p_r^{a_r}p_{r+1}^{a_{r+1}},2^{a_1}p_2^{a_2}\cdots p_r^{a_r}p_{r+1}^{a_{r+1}})$, which is not admissible by Harvey's Theorem. This signature is not obtained by extending any signature for $C_n$, because the signature 
\begin{equation}
(1,2^{a_1}p_2^{a_2}\cdots p_r^{a_r},2^{a_1}p_2^{a_2}\cdots p_r^{a_r}),
\end{equation}
which is the only candidate for a signature for $C_n$ to be extended to obtain \eqref{eq:signallzero}, is also not admissible. Thus, once the $T$-value of the signature \eqref{eq:signallzero} is computed, there is no associated $T$-value for $C_n$ in this case. The number of topological actions of this signature for the given $t$ ranges is then
\begin{align}
\sum_{t=0}^{a_{r+1}-1}\frac{1}{2}(f(p_{r+1}^t))+\frac{1}{2}f(p_{r+1}^{a_{r+1}})+\frac{1}{2}\label{eq:0a}.
\end{align}
Next, signatures of the form \eqref{eq:sign5} then become
\begin{equation*}
\left(p_{r+1}^{a_{r+1}},2p_2^{a_2}\cdots p_r^{a_r}p_{r+1}^{t},2p_2^{a_2}\cdots p_r^{a_r}p_{r+1}^{a_{r+1}}\right).
\end{equation*}
The range for $t$ here is $0\leq t\leq a_{r+1}-1$. Then the number of topological actions of this signature for the given $t$-ranges is
\begin{equation}
\sum_{t=0}^{a_{r+1}-1}f(p_{r+1}^t)\label{eq:0b}.
\end{equation}
Combining \eqref{eq:0a} and \eqref{eq:0b} yields the total number of topological actions of this signature for the case when each $k_i=0$ for $2\leq i\leq r$:
\begin{align}
\sum_{t=0}^{a_{r+1}-1}\frac{1}{2}(f(p_{r+1}^t))+&\frac{1}{2}f(p_{r+1}^{a_{r+1}})+\frac{1}{2}+\sum_{t=0}^{a_{r+1}-1}f(p_{r+1}^t)\nonumber\\
&=\frac{1}{2}p_{r+1}^{a_{r+1}-1}(p_{r+1}+1)\label{eq:case1total2}.
\end{align}

\item Suppose not all $k_i=0$, and keep each $k_i$ fixed for $i=2,\ldots,r$. Consider signatures of the form \eqref{eq:sign4}, i.e.,
\begin{equation*}
\left(p_2^{k_2}\cdots p_r^{k_r}p_{r+1}^t,2p_2^{a_2}\cdots p_r^{a_r}p_{r+1}^{a_{r+1}},2p_2^{a_2}\cdots p_r^{a_r}p_{r+1}^{a_{r+1}}\right).
\end{equation*}
The range for $t$ is $0\leq t\leq a_{r+1}$. However, the $T$-values of these signatures depends on whether $t$ satisfies $1\leq t\leq a_{r+1}-1$ or not. We describe the three different possible $T$-values for $t$, and sum those up. In particular, we want to relate our resulting quantity with the $T$-value of the associated signature for $C_n$. In this case, the signature for $C_n$ would be of the form
\begin{equation*}
\left(p_2^{k_2}\cdots p_r^{k_r},2^{a_1}p_2^{a_2}\cdots p_r^{a_r},2^{a_1}p_2^{a_2}\cdots p_r^{a_r}\right),
\end{equation*}
whose $T$-value is given by
\begin{equation*}
T^1_{k_2,\ldots,k_r,n}=\frac{1}{2}\prod_{i=2}^r(f(p_i^{k_i}))+\frac{1}{2}.
\end{equation*}
This is true, because Theorem \ref{thm:ben} gives this $T$-value when two periods of a signature for $C_n$ are equal. Now, the $T$-values of the signatures \eqref{eq:sign4} is given below.
\begin{itemize}
\item When $t=0$,
\begin{align*}
&T\left(p_2^{k_2}\cdots p_r^{k_r},2p_2^{a_2}\cdots p_r^{a_r}p_{r+1}^{a_{r+1}},2p_2^{a_2}\cdots p_r^{a_r}p_{r+1}^{a_{r+1}}\right)\\
&\quad=\frac{1}{2}\prod_{i=2}^r(f(p_i^{k_i}))+\frac{1}{2}.
\end{align*}
\item When $1\leq t\leq a_{r+1}-1$,
\begin{align*}
&T\left(p_2^{k_2}\cdots p_r^{k_r}p_{r+1}^t,2p_2^{a_2}\cdots p_r^{a_r}p_{r+1}^{a_{r+1}},2p_2^{a_2}\cdots p_r^{a_r}p_{r+1}^{a_{r+1}}\right)\\
&\quad=\frac{1}{2}\prod_{i=2}^r(f(p_i^{k_i}))f(p_{r+1}^t).
\end{align*}
\item When $t=a_{r+1}$,
\begin{align*}
&T\left(p_2^{k_2}\cdots p_r^{k_r}p_{r+1}^{a_{r+1}},2p_2^{a_2}\cdots p_r^{a_r}p_{r+1}^{a_{r+1}},2p_2^{a_2}\cdots p_r^{a_r}p_{r+1}^{a_{r+1}}\right)\\
&\quad=\frac{1}{2}\prod_{i=2}^r(f(p_i^{k_i}))\cdot f(p_{r+1}^{a_{r+1}})+\frac{1}{2}.
\end{align*}
\end{itemize}
Summing the above $T$-values over the given $t$ ranges, the total number of quasiplatonic topological actions for $C_{n\cdot p_{r+1}^{a_{r+1}}}$ of this signature for the case when not all $k_i=0$ for $2\leq i\leq r$ of the form \eqref{eq:sign4} is then
\begin{align}
&\left(\frac{1}{2}\prod_{i=2}^r(f(p_i^{k_i}))+\frac{1}{2}\right)+\frac{1}{2}\prod_{i=2}^r(f(p_i^{k_i}))\left(\sum_{t=1}^{a_{r+1}-1}f(p_{r+1}^t)\right)\nonumber\\
&\hspace{33mm}+\left(\frac{1}{2}\prod_{i=2}^r(f(p_i^{k_i}))\cdot f(p_{r+1}^{a_{r+1}})+\frac{1}{2}\right)\nonumber\\
&=1+\frac{1}{2}\prod_{i=2}^r(f(p_i^{k_i}))\cdot p_{r+1}^{a_{r+1}-1}(p_{r+1}-1)\nonumber\\
&=1+\left(T^1_{k_2,\ldots,k_r,n}-\frac{1}{2}\right)p_{r+1}^{a_{r+1}}(p_{r+1}-1)\label{eq:a1}.
\end{align}
Next, consider signatures of the form \eqref{eq:sign5}. The $T$-value for the signature for $C_n$ which extends to \eqref{eq:sign5} is similar to the case just described. For signatures \eqref{eq:sign5}, the $T$-values do not include any factors of $1/2$, because all periods are distinct. Here, the $t$-ranges are $0\leq t\leq a_{r+1}-1$. Thus, the total number of topological actions of this signature for the case when not all $k_i=0$ for $i=2,\ldots,r$ of the form \eqref{eq:sign5} is
\begin{align}
\prod_{i=2}^r(f(p_i^{k_i}))\cdot\left(\sum_{t=0}^{a_{r+1}-1}f(p_{r+1}^t)\right)&=\prod_{i=2}^r(f(p_i^{k_i}))\cdot p_{r+1}^{a_{r+1}-1}\nonumber\\
&=\left(T^1_{k_2,\ldots,k_r,n}-\frac{1}{2}\right)\cdot 2p_{r+1}^{a_{r+1}-1}\label{eq:a2}.
\end{align}
Summing \eqref{eq:a1} and \eqref{eq:a2} gives the total number of topological actions of $C_{n\cdot p_{r+1}^{a_{r+1}}}$ with signatures having not all $k_i=0$ as
\begin{equation}
1+\left(T^1_{k_2,\ldots,k_r,n}-\frac{1}{2}\right)p_{r+1}^{a_{r+1}-1}(p_{r+1}+1)\label{eq:case1total3}.
\end{equation}
\end{itemize}
This completes the second case.

We are now finally in position to compute $QC(n\cdot p_{r+1}^{a_{r+1}})$. We will sum up all possible signatures, whose $T$-values are given in either \eqref{eq:case2}, \eqref{eq:case1total1} \eqref{eq:case1total2}, or \eqref{eq:case1total3}. That is, we compute the total sum over all possible signatures ranging over possible $k_2,\ldots,k_r$ ranges. The interesting case occurs when signatures have periods with exponents having not all $k_i=0$, whose corresponding $T$-values are given in \eqref{eq:case1total3}. Here, there are $2^{r-1}-1$ such possible $(r-1)$-tuples of the numbers $0$ or $1$ when excluding the case of all $0$. Hence, we will add \eqref{eq:case1total3} to itself $2^{r-1}-1$ times. As cases are added, the terms $T^i_{k_2,\ldots,k_r,n}$ involving $T$-values of signatures for $C_n$ do not get added together, since they are representing functions with different values. Using the fact that
\begin{equation*}
QC(n)=\sum_{i=1}^2\sum_{k_2,\ldots,k_r}T^i_{k_2,\ldots,k_r,n},
\end{equation*}
we conclude that
\begin{align*}
QC(n\cdot p_{r+1}^{a_{r+1}})&=\left(\sum_{k_2,\ldots,k_r}T^2_{k_2,\ldots,k_r,n}\right)p_{r+1}^{a_{r+1}-1}(p_{r+1}+1)\\
&\quad +\left(\sum_{\substack{k_2,\ldots,k_r\\1\leq k_i\leq a_i-1\\ \text{for some }i}}T^1_{k_2,\ldots,k_r,n}\right)p_{r+1}^{a_{r+1}-1}(p_{r+1}+1)+\frac{1}{2}p_{r+1}^{a_{r+1}-1}(p_{r+1}+1)\\
&\quad+2^{r-1}-1+\left(T^1_{k_2,\ldots,k_r,n}-\frac{1}{2}\cdot(2^{r-1}-1)\right)\cdot p_{r+1}^{a_{r+1}-1}(p_{r+1}+1)\\
&=\left(\sum_{i=1}^2\sum_{k_2,\ldots,k_r}T^i_{k_2,\ldots,k_r}\right)p_{r+1}^{a_{r+1}-1}(p_{r+1}+1)\\
&\quad-\frac{1}{2}(2^{r-1}-1)p_{r+1}^{a_{r+1}-1}(p_{r+1}+1)+\frac{1}{2}p_{r+1}^{a_{r+1}-1}(p_{r+1}+1)+2^{r-1}-1\\
&=\left(QC(n)-2^{r-2}+1\right)p_{r+1}^{a_{r+1}-1}(p_{r+1}+1)+2^{r-1}-1.
\end{align*}
\end{proof}

Using Theorem \ref{thm:recursive2}, we can prove the formula for $QC(n)$ when $n$ is an even number not divisible by $4$.

\begin{proof}[Proof of Theorem \ref{thm:qceven} when $n$ is even and not divisible by 4]
We prove that
\begin{equation}\label{eq:qceven1}
QC(n)=\frac{1}{2}\prod_{i=2}^rp_i^{a_i-1}(p_i+1)+2^{r-2}-1
\end{equation}
by induction on the number of odd primes of $n=2p_2^{a_2}\cdots p_r^{a_r}$.

The base case is one odd prime, so that $n=2p^a$ for an odd prime $p$, $a\geq 1$, and $r=2$ in the formula above. All signatures for $C_n$ are either of the form $(p^k,2p^a,2p^a)$ for $1\leq k\leq a$ or $(2p^k,p^a,2p^a)$ for $0\leq k\leq a-1$. The associated $T$-values are then
\begin{align*}
T(p^k,2p^a,2p^a)=\left\lbrace\begin{array}{lr}
 \frac{1}{2}f(p^k) & 1\leq k\leq a-1\\
 \frac{1}{2}f(p^a)+\frac{1}{2} & k=a
\end{array}\right.
\end{align*}
and
\begin{equation*}
T(2p^k,p^a,2p^a)=f(p^k).
\end{equation*}
It follows that
\begin{align*}
QC(2p^a)=\sum_{i=1}^{a-1}\left(\frac{1}{2}f(p^k)\right)+\frac{1}{2}f(p^a)+\frac{1}{2}+\sum_{i=0}^{a-1}f(p^k)=\frac{1}{2}p^{a-1}(p+1),
\end{align*}
which is the desired formula when $r=2$.

Suppose for induction that \eqref{eq:qceven1} holds for $n=2p_2^{a_2}\cdots p_r^{a_r}$ with $r\geq 3$. Consider an odd prime $p_{r+1}^{a_{r+1}}$ not dividing $n$. Using Theorem \ref{thm:recursive2} and the induction hypothesis,
\begin{align*}
QC(n\cdot p_{r+1}^{a_{r+1}})&=(QC(n)-2^{r-2}+1)p_{r+1}^{a_{r+1}}(p_{r+1}+1)+2^{r-1}-1\\
&=\frac{1}{2}\prod_{i=2}^{r+1}p_i^{a_i-1}(p_i+1)+2^{r-1}-1.
\end{align*}
By mathematical induction, we conclude that \eqref{eq:qceven1} is true for even $n$ which are not divisible by 4.
\end{proof}

\subsection{When $n$ is of the form $n=2^{a_1}\prod_{i=2}^rp_i^{a_i}$ with $a_1\geq 2$}

In order to prove the formula for $QC(n)$ when $n$ is even of the form $n=2^{a_1}\prod_{i=2}^r p_i^{a_i}$ with $a_1\geq 2$, we will incorporate another recursive formula. The following fact will be used often in the proof:
\begin{equation}\label{phi}
\sum_{k=0}^{n-1}\phi(2^k)=\phi(2^n).
\end{equation}

\begin{theorem}\label{thm:recursiveqc}
\begin{equation*}
QC(2^{a_1}p_2^{a_2}\cdots p_r^{a_r})=2\cdot QC(2^{a_1-1}p_2^{a_2}\cdots p_r^{a_r})+1+\left\lbrace\begin{array}{lr}
 0 & 2\leq a_1\leq 3\\
 -2^r & 4\leq a_1
\end{array}\right..
\end{equation*}
\end{theorem}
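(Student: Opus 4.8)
### Proof proposal for Theorem \ref{thm:recursiveqc}

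The plan is to mirror the structure of the proof of Theorem \ref{thm:recursive2}, except that here we vary the power of $2$ rather than appending a new odd prime. Fix $m = p_2^{a_2}\cdots p_r^{a_r}$ odd and set $n = 2^{a_1}m$. By Harvey's Theorem, an admissible signature for $C_n$ has the shape $(2^{k_1}d_1,\,2^{k_2}d_2,\,2^{k_3}d_3)$ where the odd parts $d_j\mid m$ satisfy the lcm conditions among themselves, and the exponents $k_1,k_2,k_3\in\{0,1,\ldots,a_1\}$ satisfy: (a) at least two of them equal $a_1$ (the lcm condition for the prime $2$), and (b) if $a_1\ge 1$ the number of periods divisible by $2^{a_1}$ is even, i.e.\ exactly two of the $k_j$ equal $a_1$ and the third is $\le a_1-1$ — unless $a_1=0$, which does not occur here since $a_1\ge 2$. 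So each admissible signature for $C_n$ is obtained from an admissible signature for $C_{2^{a_1-1}m}$ by taking the unique period whose $2$-exponent is the smaller one and allowing its exponent to range, while bumping the other two exponents from $a_1-1$ to $a_1$. First I would set up this bijection between ``signature skeletons'' for $C_{2^{a_1-1}m}$ and families of signatures for $C_n$ indexed by the new free exponent $t\in\{0,1,\ldots,a_1-1\}$ in the short period.

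Next I would compute, for each skeleton, the sum of the $T$-values over $t=0,\ldots,a_1-1$ and compare it to the $T$-value of the corresponding $C_{2^{a_1-1}m}$-signature, exactly as Table \ref{tab:2paqbfixedk} does in the odd-prime case. The relevant raw ingredient is $\sum_{t=0}^{a_1-1}\phi(2^t)=\phi(2^{a_1})$ from \eqref{phi}, together with the three cases of Theorem \ref{thm:ben}: whether the three periods of the $C_n$-signature are all distinct, two-equal, or all-equal, and likewise for the contracted $C_{2^{a_1-1}m}$-signature. The key point driving the factor of $2$ is that $\phi(2^{a_1}) = 2\,\phi(2^{a_1-1})$ once $a_1\ge 2$, so summing a $T$-value that is essentially $c\cdot\phi(2^{k})$ over the new exponent contributes $c\cdot 2^{?}$ times what the smaller action contributed; the ``$+1$'' comes from the boundary terms (the extra $\tfrac12$'s in the two-equal and all-equal formulas of Theorem \ref{thm:ben}, paralleling the $+1$ in Theorem \ref{prop:base}). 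I would organize this into the same signature-type bookkeeping as Section \ref{sec:qceven}: a type where the short $2$-exponent period also carries the maximal odd part (so contracting gives a two-equal signature), and types where it carries a submaximal odd part (giving all-distinct signatures), tracking the $\tfrac12$ corrections.

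The genuine subtlety — and where the correction term $-2^r$ for $a_1\ge 4$ enters — is the interaction between the $2$-part and the value of $\tau_1$ in part (ii) of Theorem \ref{thm:ben}, governed by Lemma \ref{lem:tau1}. For a two-equal signature $(2^{k_1}d_1,\,n',\,n')$ the $T$-value involves $\tau_1(n', n_1)$, and Lemma \ref{lem:tau1}(ii) returns $1$ when the relevant $2$-exponent $h_0\in\{0,1\}$ but $2$ when $h_0 = k_0-1$, i.e.\ the $\tau_1$ contribution jumps precisely when $a_1$ is large enough that the exponent $a_1-1$ is genuinely different from $0$ and $1$ — that is, when $a_1-1\ge 2$, i.e.\ $a_1\ge 3$, with the actual threshold in the recursion landing at $a_1\ge 4$ after one contraction step. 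The hard part will be pinning down exactly which signature types pick up this extra $\tau_1$-contribution and checking that, summed over the $2^{r-1}$ or $2^r$ relevant choices of odd-part pattern, it produces exactly $-2^r$ (and nothing for $a_1\in\{2,3\}$). I would handle this by isolating the two-equal signatures of the form $(2^{t}\!\cdot\!(\text{maximal odd part}),\,n',\,n')$, applying Lemma \ref{lem:tau1} with $m = n'$ and $n = 2^{t}\cdot(\cdots)$ case-by-case on $t$, and carefully matching the $a_1 \le 3$ versus $a_1 \ge 4$ dichotomy against the two rows of Lemma \ref{lem:tau1}(ii); then I would assemble $QC(n) = \sum_i\sum_{\text{skeletons}} (\text{per-skeleton }t\text{-sum})$ and read off the claimed recursion. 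Once the $a_1\ge 4$ bookkeeping is settled, plugging back $QC(2^{a_1-1}m)$ and simplifying gives the stated formula.
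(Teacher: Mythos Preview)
Your plan is the paper's plan: hold the odd skeleton fixed, let the free $2$-exponent run, and use $\sum_{t=0}^{a_1-1}\phi(2^t)=2^{a_1-1}$ (equation \eqref{phi}) to produce the factor $2$, with the constant terms coming from the $\tau_1$ piece of Theorem~\ref{thm:ben}(ii). The paper does not attempt a uniform argument, though; it simply computes $a_1=2$ and $a_1=3$ by hand (matching each extended signature's $T$-value to that of its contraction, equations \eqref{recursivex}--\eqref{recursivey6}), and for $a_1\ge 4$ it writes both $QC(2^{a_1}m)$ and $QC(2^{a_1-1}m)$ in the closed form $c\cdot\bigl(\sum_{k_2,\ldots,k_r}\hat T\bigr)+2^r+2^{a_1-2}-1$ and subtracts.

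Two places where your sketch would not go through as written. First, the type dichotomy is mis-stated: the signature $(2^t d_1,\,2^{a_1}m,\,2^{a_1}m)$ is two-equal for \emph{every} choice of odd part $d_1\mid m$, not only when $d_1=m$; what forces the other two periods to coincide is that all the free odd exponents sit in the same period as the short $2$-exponent (the paper's Type I), not that this period carries the maximal odd part. Second, your threshold heuristic is not the actual mechanism. By Lemma~\ref{lem:tau1}, for fixed admissible odd part $d$ one has
\[
\sum_{t=0}^{a_1-1}\tau_1\bigl(2^{a_1}m,\,2^{t}d\bigr)=\begin{cases}1+1=2 & a_1=2,\\ 1+1+0+\cdots+0+2=4 & a_1\ge 3,\end{cases}
\]
so this sum \emph{stabilises} at $4$ for $a_1\ge 3$ rather than continuing to double; the recursion defect is therefore $4-2\cdot 4=-4$ precisely when both $a_1$ and $a_1-1$ are $\ge 3$, i.e.\ $a_1\ge 4$. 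Converting this to $-2^r$ requires counting the admissible odd patterns $d$ with each $p_i$-exponent in $\{0,a_i\}$ and handling the all-zero pattern separately (the paper's equation \eqref{recursive2}); you should expect to isolate that boundary case rather than absorb it into the general $\tau_1$ bookkeeping.
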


\begin{proof}
First let $a_1=2$. We calculate $QC(2p_2^{a_2}\cdots p_r^{a_r})$ in terms of $QC(2^2p_2^{a_2}\cdots p_r^{a_r})$ by extending each signature type for $C_{2p_2^{a_2}\cdots p_r^{a_r}}$ to a corresponding signature for $C_{2^2p_2^{a_2}\cdots p_r^{a_r}}$ by multiplying certain periods by 2 (this is described later in the proof). 

The admissible signatures for $C_{2p_2^{a_2}\cdots p_r^{a_r}}$ are given below as two types: type 1 and type 2 signatures.
\begin{align*}
&\text{(Type 1)}\qquad (p_2^{k_2}\cdots p_r^{k_r},2p_2^{a_2}\cdots p_r^{a_r},2p_2^{a_2}\cdots p_r^{a_r}),\\
&\text{(Type 2)}\qquad (2p_2^{k_2}\cdots p_r^{k_r},p_2^{a_2}\cdots p_r^{a_r},2p_2^{a_2}\cdots p_r^{a_r}).
\end{align*} 
The ranges for type 1 signatures are $0\leq k_i\leq a_i$, except for when each $k_i=0$. The ranges for type 2 signatures are $0\leq k_i\leq a_i$, except for when each $k_i=a_i$. The $T$-values of these types is then
\begin{align*}
&T(p_2^{k_2}\cdots p_r^{k_r},2p_2^{a_2}\cdots p_r^{a_r},2p_2^{a_2}\cdots p_r^{a_r})\\
&\quad=\left\lbrace\begin{array}{lr}
 \displaystyle\frac{1}{2}\prod_{i=2}^rf(p_i^{k_i}) &\quad 1\leq k_i\leq a_i-1 \text{ for some } i\\
 \displaystyle\frac{1}{2}\prod_{i=2}^rf(p_i^{k_i})+\frac{1}{2} &\quad \text{ otherwise}
\end{array}\right.,\\
&T(2p_2^{k_2}\cdots p_r^{k_r},p_2^{a_2}\cdots p_r^{a_r},2p_2^{a_2}\cdots p_r^{a_r})=\prod_{i=2}^rf(p_i^{k_i}).
\end{align*}

We derive the claimed formula first when $a_1=2$ by extending both type 1 and type 2 signatures.

Consider type 1 signatures. We can extend type 1 signatures to signatures for $C_{2^2p_2^{a_2}\cdots p_r^{a_r}}$ by multiplying either all periods by 2 or multiplying the second and third periods by 2. This gives us the following two extended signatures and respective ranges for $C_{2^2p_2^{a_2}\cdots p_r^{a_r}}$:
\begin{align}
&(p_2^{k_2}\cdots p_r^{k_r},2^2p_2^{a_2}\cdots p_r^{a_r},2^2p_2^{a_2}\cdots p_r^{a_r}), \qquad 0\leq k_i\leq a_i, \text{ not all }k_i=a_i,\label{recursive1a}\\
&(2p_2^{k_2}\cdots p_r^{k_r},2^{2}p_2^{a_2}\cdots p_r^{a_r},2^2p_2^{a_2}\cdots p_r^{a_r}), \qquad 0\leq k_i\leq a_i\label{recursive1b}.
\end{align}
Except for when each $k_i=0$ for $i=2,\ldots,r$, Theorem \ref{thm:ben} implies that the $T$-values of the above signatures \eqref{recursive1a} and \eqref{recursive1b} both coincide with the $T$-value of a type 1 signature. Specifically,
\begin{align}\label{recursivex}
T(p_2^{k_2}\cdots p_r^{k_r},2^2p_2^{a_2}\cdots p_r^{a_r},2^2p_2^{a_2}\cdots p_r^{a_r})&=T(2p_2^{k_2}\cdots p_r^{k_r},2^2p_2^{a_2}\cdots p_r^{a_r},2^2p_2^{a_2}\cdots p_r^{a_r})\nonumber\\
&=T(p_2^{k_2}\cdots p_r^{k_r},2p_2^{a_2}\cdots p_r^{a_r},2p_2^{a_2}\cdots p_r^{a_r}).
\end{align}
If each $k_i=0$, then we have one extra contribution from the extended signature $(2,2^2p_2^{a_2}\cdots p_r^{a_r},2^2p_2^{a_2}\cdots p_r^{a_r})$:
\begin{equation*}
T(2,2^2p_2^{a_2}\cdots p_r^{a_r},2^2p_2^{a_2}\cdots p_r^{a_r})=\frac{1}{2}\big(1+\phi(2)\big)=1.
\end{equation*}
Therefore, the sum over all $k_i$ of the $T$-values of the two extended signatures \eqref{recursive1a} and \eqref{recursive1b} is
\begin{align}
\sum_{k_2,\ldots,k_r}\Big\lbrace&T(p_2^{k_2}\cdots p_r^{k_r},2^2p_2^{a_2}\cdots p_r^{a_r},2^2p_2^{a_2}\cdots p_r^{a_r})\nonumber\\
&\quad +T(2p_2^{k_2}\cdots p_r^{k_r},2^2p_2^{a_2}\cdots p_r^{a_r},2^2p_2^{a_2}\cdots p_r^{a_r})\Big\rbrace\nonumber\\
&=2\cdot\sum_{k_2,\ldots,k_r}\Big(T(p_2^{k_2}\cdots p_r^{k_r},2p_2^{a_2}\cdots p_r^{a_r},2p_2^{a_2}\cdots p_r^{a_r})\Big)+1\label{recursivea}
\end{align}

Now consider type 2 signatures. Extending Type 2 signatures as in the previous type yields two extended signatures with their respective ranges for $C_{2^2p_2^{a_2}\cdots p_r^{a_r}}$:
\begin{align}
&(2^2p_2^{k_2}\cdots p_r^{k_r},p_2^{a_2}\cdots p_r^{a_r},2^2p_2^{a_2}\cdots p_r^{a_r}), \qquad 0\leq k_i\leq a_i, \text{ not all }k_i=a_i,\label{recursive1c}\\
&(2^2p_2^{k_2}\cdots p_r^{k_r},2p_2^{a_2}\cdots p_r^{a_r},2^2p_2^{a_2}\cdots p_r^{a_r}), \qquad 0\leq k_i\leq a_i \text{ not all }k_i=a_i \label{recursive1d}.
\end{align}
The $T$-values of these extended signatures agree with the $T$-values of their associated signature for $C_{2p_2^{a_2}\cdots p_r^{a_r}}$, that is,
\begin{align}\label{recursivexx}
T(2^2p_2^{k_2}\cdots p_r^{k_r},p_2^{a_2}\cdots p_r^{a_r},2^2p_2^{a_2}\cdots p_r^{a_r})&=T(2^2p_2^{k_2}\cdots p_r^{k_r},2p_2^{a_2}\cdots p_r^{a_r},2^2p_2^{a_2}\cdots p_r^{a_r})\nonumber\\
&=T(2p_2^{k_2}\cdots p_r^{k_r},p_2^{a_2}\cdots p_r^{a_r},2p_2^{a_2}\cdots p_r^{a_r}).
\end{align}
Therefore, the sum over all $k_i$ of the two extended signatures \eqref{recursive1c} and \eqref{recursive1d} gives
\begin{align}
\sum_{k_2,\ldots,k_r}\Big\lbrace&T(2^2p_2^{k_2}\cdots p_r^{k_r},p_2^{a_2}\cdots p_r^{a_r},2^2p_2^{a_2}\cdots p_r^{a_r})\nonumber\\
&\quad +T(2^2p_2^{k_2}\cdots p_r^{k_r},2p_2^{a_2}\cdots p_r^{a_r},2^2p_2^{a_2}\cdots p_r^{a_r})\Big\rbrace\nonumber\\
&=2\cdot\sum_{k_2,\ldots,k_r}\Big(T(2p_2^{k_2}\cdots p_r^{k_r},p_2^{a_2}\cdots p_r^{a_r},2p_2^{a_2}\cdots p_r^{a_r})\Big)\label{recursiveb}.
\end{align}
Adding together \eqref{recursivea} and \eqref{recursiveb}, we conclude that
\begin{align*}
QC(2^2p_2^{a_2}\cdots p_r^{a_r})&=2 QC(2p_2^{a_2}\cdots p_r^{a_r})+1,
\end{align*}
which is the desired formula when $a_1=2$.

\vspace{2mm}

Next, let $a_1=3$. We consider first those signatures with two periods that are equal. By \eqref{recursivex} above, we know the $T$-values for the admissible signatures for $C_{2^2p_2^{a_2}\cdots p_r^{a_r}}$ having two equal periods. On the other hand, there are three admissible signature forms for $C_{2^3p_2^{a_2}\cdots p_r^{a_r}}$ with two equal periods. They are as below, with their respective $k_i$ ranges:
\begin{align*}
&(p_2^{k_2}\cdots p_r^{k_r},2^3p_2^{a_2}\cdots p_r^{a_r},2^3p_2^{a_2}\cdots p_r^{a_r}),\quad 0\leq k_i\leq a_i, \text{ not all }k_i=0,\\
&(2p_2^{k_2}\cdots p_r^{k_r},2^3p_2^{a_2}\cdots p_r^{a_r},2^3p_2^{a_2}\cdots p_r^{a_r}), \quad 0\leq k_i\leq a_i,\\
&(2^2p_2^{k_2}\cdots p_r^{k_r},2^3p_2^{a_2}\cdots p_r^{a_r},2^3p_2^{a_2}\cdots p_r^{a_r}), \quad 0\leq k_i\leq a_i.
\end{align*}
Calculating their $T$-values in terms of the $T$-values of their associated signatures for $C_{2^2p_2^{a_2}\cdots p_r^{a_r}}$,
\begin{align}
&T(p_2^{k_2}\cdots p_r^{k_r},2^3p_2^{a_2}\cdots p_r^{a_r},2^3p_2^{a_2}\cdots p_r^{a_r})\nonumber\\
&\quad=T(p_2^{k_2}\cdots p_r^{k_r},2^2p_2^{a_2}\cdots p_r^{a_r},2^2p_2^{a_2}\cdots p_r^{a_r}),\label{recursivey1}\\
&T(2p_2^{k_2}\cdots p_r^{k_r},2^3p_2^{a_2}\cdots p_r^{a_r},2^3p_2^{a_2}\cdots p_r^{a_r})\nonumber\\
&\quad=T(2p_2^{k_2}\cdots p_r^{k_r},2^2p_2^{a_2}\cdots p_r^{a_r},2^2p_2^{a_2}\cdots p_r^{a_r}),\label{recursivey2}\\
&T(2^2p_2^{k_2}\cdots p_r^{k_r},2^3p_2^{a_2}\cdots p_r^{a_r},2^3p_2^{a_2}\cdots p_r^{a_r})\nonumber\\
&\quad=\left\lbrace\begin{array}{lr}
 2\cdot T(p_2^{k_2}\cdots p_r^{k_r},2^2p_2^{a_2}\cdots p_r^{a_r},2^2p_2^{a_2}\cdots p_r^{a_r}) & k_i\neq 0 \text{ for some }i\\
 \phi(4) & k_i=0 \text{ for all }i
\end{array}\right..\label{recursiveyy}
\end{align}
Looking closely at the case when $k_i=0$ for each $i=2,\ldots,r$ in \eqref{recursiveyy}, we have
\begin{align}\label{recursivey3}
T(2^2,2^3p_2^{a_2}\cdots p_r^{a_r},2^3p_2^{a_2}\cdots p_r^{a_r})&=\phi(4)\nonumber\\
&=2\cdot T(2,2^2p_2^{a_2}\cdots p_r^{a_r},2^2p_2^{a_2}\cdots p_r^{a_r})+1.
\end{align}

Next, consider signatures for $C_{2^3p_2^{a_2}\cdots p_r^{a_r}}$ having all distinct periods. By \eqref{recursivexx} above, we know the $T$-values of the admissible signatures for $C_{2^2p_2^{a_2}\cdots p_r^{a_r}}$ having all distinct periods. In fact, they all equal $\prod_{i=2}^rf(p_i^{k_i})$. On the other hand, we have three admissible signatures for $C_{2^3p_2^{a_2}\cdots p_r^{a_r}}$ having distinct peiods. They are as below, with all their $k_i$ parameters satisfying $0\leq k_i\leq a_i$ for $i=2,\ldots,r$, but not all $k_i=a_i$.
\begin{align*}
&(2^3p_2^{k_2}\cdots p_r^{k_r},p_2^{a_2}\cdots p_r^{a_r},2^3p_2^{a_2}\cdots p_r^{a_r}),\\
&(2^3p_2^{k_2}\cdots p_r^{k_r},2p_2^{a_2}\cdots p_r^{a_r},2^3p_2^{a_2}\cdots p_r^{a_r}),\\
&(2^3p_2^{k_2}\cdots p_r^{k_r},2^2p_2^{a_2}\cdots p_r^{a_r},2^3p_2^{a_2}\cdots p_r^{a_r}).
\end{align*}
Calculating their $T$-values,
\begin{align}
&T(2^3p_2^{k_2}\cdots p_r^{k_r},p_2^{a_2}\cdots p_r^{a_r},2^3p_2^{a_2}\cdots p_r^{a_r})\nonumber\\
&\quad=T(2^2p_2^{k_2}\cdots p_r^{k_r},p_2^{a_2}\cdots p_r^{a_r},2^3p_2^{a_2}\cdots p_r^{a_r})\label{recursivey4}\\
&T(2^3p_2^{k_2}\cdots p_r^{k_r},2p_2^{a_2}\cdots p_r^{a_r},2^3p_2^{a_2}\cdots p_r^{a_r})\nonumber\\
&\quad=T(2^2p_2^{k_2}\cdots p_r^{k_r},2p_2^{a_2}\cdots p_r^{a_r},2^2p_2^{a_2}\cdots p_r^{a_r})\label{recursivey5}\\
&T(2^3p_2^{k_2}\cdots p_r^{k_r},2^2p_2^{a_2}\cdots p_r^{a_r},2^3p_2^{a_2}\cdots p_r^{a_r})\nonumber\\
&\quad=\phi(4)\cdot T(2^2p_2^{k_2}\cdots p_r^{k_r},p_2^{a_2}\cdots p_r^{a_r},2^2p_2^{a_2}\cdots p_r^{a_r})\nonumber\\
&\quad=2\cdot T(2^2p_2^{k_2}\cdots p_r^{k_r},p_2^{a_2}\cdots p_r^{a_r},2^2p_2^{a_2}\cdots p_r^{a_r})\label{recursivey6}.
\end{align}
We can therefore combine all terms from \eqref{recursivey1}-\eqref{recursivey6} to conclude
\begin{equation*}
QC(2^3p_2^{a_2}\cdots p_r^{a_r})=2\cdot QC(2^2p_2^{a_2}\cdots p_r^{a_r})+1.
\end{equation*}

Finally, let $a_1\geq 4$. We have two types of admissible signatures for $C_{2^{a_1}p_2^{a_2}\cdots p_r^{a_r}}$, which we label as types I and II, depending on whether two periods are equal or not.
\begin{align*}
&\text{(Type I)}\qquad (2^{k_1}p_2^{k_2}\cdots p_r^{k_r},2^{a_1}p_2^{a_2}\cdots p_r^{a_r},2^{a_1}p_2^{a_2}\cdots p_r^{a_r}),\\
&\text{(Type II)}\qquad (2^{a_1}p_2^{k_2}\cdots p_r^{k_r},2^{k_1}p_2^{a_2}\cdots p_r^{a_r},2^{a_1}p_2^{a_2}\cdots p_r^{a_r}).
\end{align*}
As before, each integer $k_i$ represents the parameter for each exponent $p_i$, where $2\leq i\leq r$. The ranges for type I signatures are $0\leq k_i\leq a_i$ for $2\leq i\leq r$and $0\leq k_1\leq a_1-1$. We exclude the case when $k_i=0$ for $i=1,2,\ldots,r$. The ranges for type II signatures are $0\leq k_i\leq a_i-1$ for each $1\leq i\leq r$. 

For a fixed $r$-tuple of $k_i$'s and a fixed signature type I or II, we calculate that signature's $T$-value. Then, we sum over all allowable ranges for the $k_i$ to obtain $QC(2^{a_1}p_2^{a_2}\cdots p_r^{a_r})$ written in terms of $QC(2^{a_1-1}p_2^{a_2}\cdots p_r^{a_r})$.

\underline{For Type I Signatures}: We will denote
\begin{equation*}
\hat{T}:=\frac{1}{2}\prod_{i=2}^rf(p_i^{k_i}),
\end{equation*}
which is a quantity dependent on $k_2,\ldots,k_r$. The $T$-values of type I signatures depend on whether or not $1\leq k_i\leq a_i-1$ for some $i=2,\ldots,r$. We consider these cases separately.
\begin{itemize}
\item If $1\leq k_i\leq a_i-1$ for some $2\leq i\leq r$, then
\begin{equation*}
T(2^{k_1}p_2^{k_2}\cdots p_r^{k_r},2^{a_1}p_2^{a_2}\cdots p_r^{a_r},2^{a_1}p_2^{a_2}\cdots p_r^{a_r})=\phi(2^{k_1})\cdot\hat{T}.
\end{equation*}
Summing over $k_1$ for $0\leq k_1\leq a_1-1$ and using \eqref{phi},
\begin{align}\label{recursive1}
\sum_{k_1=0}^{a_1-1}&T(2^{k_1}p_2^{k_2}\cdots p_r^{k_r},2^{a_1}p_2^{a_2}\cdots p_r^{a_r},2^{a_1}p_2^{a_2}\cdots p_r^{a_r})\nonumber\\
&=\hat{T}\cdot\sum_{k_1=0}^{a_1-1}\phi(2^{k_1})\nonumber\\
&=\hat{T}\cdot\left(\sum_{k_1=0}^{a_1-2}\big(\phi(2^{k_1})\big)+\phi(2^{a_1-1})\right)\nonumber\\
&=\hat{T}\cdot 2\sum_{k_1=0}^{a_1-2}\phi(2^{k_1}).
\end{align}
\item Suppose now all $k_i$ do not satisfy $1\leq k_i\leq a_i-1$, where $2\leq i\leq r$. The $T$-values of the respective signatures depends additionally on whether or not all $k_i=0$, for $2\leq i\leq r$.

For $2\leq i\leq r$, suppose all $k_i=0$. This implies $k_1\neq 0$, as otherwise, one of the periods of this signature is 1, contrary to Harvey's Theorem. Then,
\begin{align*}
T(2^{k_1}p_2^{0}\cdots p_r^{0},&2^{a_1}p_2^{a_2}\cdots p_r^{a_r},2^{a_1}p_2^{a_2}\cdots p_r^{a_r})\\
&=\left\lbrace\begin{array}{lr}
 1 & k_1=1\\
 \frac{1}{2}\phi(2^{k_1}) & 2\leq k_1\leq a_1-2\\
 \frac{1}{2}\phi(2^{k_1})+1 & k_1=a_1-1.
\end{array}\right..
\end{align*}
Summing over $k_1$,
\begin{align}\label{recursive2}
T(&2^{k_1}p_2^{0}\cdots p_r^{0},2^{a_1}p_2^{a_2}\cdots p_r^{a_r},2^{a_1}p_2^{a_2}\cdots p_r^{a_r})\nonumber\\
&=1+\frac{1}{2}\left(\phi(4)+\phi(8)+\cdots+\phi(2^{a_1-2})\right)+\frac{1}{2}\phi(2^{a_1-1})+1\nonumber\\
&=2+\frac{1}{2}\sum_{k_1=2}^{a_1-1}\phi(2^{k_1})\nonumber\\
&=2+\frac{1}{2}(2^{a_1-1}-2)\nonumber\\
&=2^{a_1-2}+1.
\end{align}

Now suppose not all $k_i=0$ for $2\leq i\leq r$. Then,
\begin{align*}
T(2^{k_1}p_2^{k_2}\cdots p_r^{k_r},&2^{a_1}p_2^{a_2}\cdots p_r^{a_r},2^{a_1}p_2^{a_2}\cdots p_r^{a_r})\\
&=\left\lbrace\begin{array}{lr}
 \hat{T}+\frac{1}{2} & k_1=0,1\\
 \phi(2^{k_1})\hat{T} & 2\leq k_1\leq a_1-2\\
 \frac{1}{2}\hat{T}+1 & k_1=a_1-1.
\end{array}\right..
\end{align*}
Next, we sum over $k_1$. The extra terms of $1/2$ and $1$ which appear in the $T$-values above will be added $2^{r-1}-1$ times. This is because there are $2^{r-1}-1$ possible $r-1$-tuples of either 0's or 1's, excluding the case of all $k_i=0$. Thus,
\begin{align}\label{recursive3}
\sum_{k_1=0}^{a_1-1}&T(2^{k_1}p_2^{k_2}\cdots p_r^{k_r},2^{a_1}p_2^{a_2}\cdots p_r^{a_r},2^{a_1}p_2^{a_2}\cdots p_r^{a_r})\nonumber\\
&=\hat{T}+\frac{1}{2}(2^{r-1}-1)+\hat{T}+\frac{1}{2}(2^{r-1}-1)\nonumber\\
&\quad+\phi(4)\hat{T}+\phi(8)\hat{T}+\cdots \phi(2^{a_1-2})\hat{T}+\phi(2^{a_1-1})\hat{T}+2^{r-1}-1\nonumber\\
&=2^r-2+\hat{T}\cdot\sum_{k=0}^{a_1-1}\phi(2^k)\nonumber\\
&=2^r-2+\hat{T}\cdot 2\sum_{k=0}^{a_1-2}\phi(2^k).
\end{align}

\end{itemize}

\underline{For Type II Signatures}: For a fixed $r$-tuple of $k_i$'s with $0\leq k_i\leq a_i-1$ and $1\leq i\leq r$,
\begin{equation*}
T(2^{a_1}p_2^{k_2}\cdots p_r^{k_r},2^{k_1}p_2^{a_2}\cdots p_r^{a_r},2^{a_1}p_2^{a_2}\cdots p_r^{a_r})=\phi(2^{k_1})\cdot\prod_{i=2}^rf(p_i^{k_i}).
\end{equation*}
Then summing over $k_1$ and using \eqref{phi},
\begin{align}\label{recursive4}
\sum_{k_1=0}^{a_1-1}&T(2^{a_1}p_2^{k_2}\cdots p_r^{k_r},2^{k_1}p_2^{a_2}\cdots p_r^{a_r},2^{a_1}p_2^{a_2}\cdots p_r^{a_r})\nonumber\\
&=\prod_{i=2}^rf(p_i^{k_i})\cdot\sum_{k_1=0}^{a_1-1}\phi(2^{k_1})\nonumber\\
&=\prod_{i=2}^rf(p_i^{k_i})\cdot\left(\sum_{k_1=0}^{a_1-2}\big(\phi(2^{k_1})\big)+\phi(2^{a_1-1})\right)\nonumber\\
&=\prod_{i=2}^rf(p_i^{k_i})\cdot2\sum_{k_1=0}^{a_1-2}\phi(2^{k_1}).
\end{align}

Therefore, the total sum over all $k_i$ with $i=2,\ldots,r$ for all possible admissible signatures for $C_{2^{a_1}p_2^{a_2}\cdots p_r^{a_r}}$ is the sum of the quantities \eqref{recursive1}-\eqref{recursive4} over the parameters $k_2,\ldots,k_r$, which yields
\begin{equation*}
QC(2^{a_1}p_2^{a_2}\cdots p_r^{a_r})=2\left(\sum_{k_2,\ldots,k_r}\hat{T}\right)+2^{r}+2^{a_1-2}-1.
\end{equation*}
On the other hand, repeating the entire set of calculations for $C_{2^{a_1-1}p_2^{a_2}\cdots p_r^{a_r}}$ instead and summing over all $k_i$ for $i=2,\ldots,r$ yields
\begin{equation*}
QC(2^{a_1-1}p_2^{a_2}\cdots p_r^{a_r})=\left(\sum_{k_2,\ldots,k_r}\hat{T}\right)+2^r+2^{a_1-3}-1.
\end{equation*}
We can thus conclude
\begin{equation*}
QC(2^{a_1}p_2^{a_2}\cdots p_r^{a_r})=2\cdot QC(2^{a_1-1}p_2^{a_2}\cdots p_r^{a_r})+1-2^r.
\end{equation*}

\end{proof}

With the additional use of the recursive formula Theorem \ref{thm:recursiveqc}, we can finally prove Theorem \ref{thm:qceven} in general.

\begin{proof}[Proof of Theorem \ref{thm:qceven}]
Using our recursive formula Theorem \ref{thm:recursiveqc}, we calculate 
\begin{equation*}
QC(2^{a_1}p_2^{a_2}\cdots p_r^{a_r})
\end{equation*}
when $a_1\geq 1$. The case when $a_1=1$ was done in the previous section to this paper, which showed that
\begin{equation*}
QC(2p_2^{a_2}\cdots p_r^{a_r})=\frac{1}{2}\prod_{i=2}^rp_i^{a_i-1}(p_i+1)-1+2^{r-2}.
\end{equation*}
Recall Theorem \ref{thm:recursiveqc}:
\begin{equation*}
QC(2^{a_1}p_2^{a_2}\cdots p_r^{a_r})=2\cdot QC(2^{a_1-1}p_2^{a_2}\cdots p_r^{a_r})+1+\left\lbrace\begin{array}{lr}
 0 & 2\leq a_1\leq 3\\
 -2^r & 4\leq a_1
\end{array}\right..
\end{equation*}

First let $a_1=2$. Then,
\begin{align*}
QC(2^{2}p_2^{a_2}\cdots p_r^{a_r})&=2\cdot QC(2p_2^{a_2}\cdots p_r^{a_r})+1\\
&=2\left(\prod_{i=2}^rp_i^{a_i-1}(p_i+1)-1+2^{r-2}\right)+1\\
&=2\prod_{i=2}^rp_i^{a_i-1}(p_i+1)-1+2^{r-1}.
\end{align*}

Now let $a_1=3$. Then,
\begin{align*}
QC(2^{3}p_2^{a_2}\cdots p_r^{a_r})&=2\cdot QC(2^2p_2^{a_2}\cdots p_r^{a_r})+1\\
&=2\left(2\prod_{i=2}^rp_i^{a_i-1}(p_i+1)-1+2^{r-1}\right)+1\\
&=2^2\prod_{i=2}^rp_i^{a_i-1}(p_i+1)-1+2^{r}.
\end{align*}

Next, let $a_1\geq 4$. We proceed by induction on $a_1$. The base case is when $a_1=4$. So,
\begin{align*}
QC(2^{4}p_2^{a_2}\cdots p_r^{a_r})&=2\cdot QC(2^3p_2^{a_2}\cdots p_r^{a_r})+1-2^r\\
&=2\left(2^2\prod_{i=2}^rp_i^{a_i-1}(p_i+1)-1+2^{r}\right)+1-2^r\\
&=2^3\prod_{i=2}^rp_i^{a_i-1}(p_i+1)-1+2^{r},
\end{align*}
as desired. This completes the base case. Now assume the result holds true up until some $a_1\geq 4$. Then
\begin{align*}
QC(2^{a_1}p_2^{a_2}\cdots p_r^{a_r})&=2\cdot QC(2^{a_1-1}p_2^{a_2}\cdots p_r^{a_r})+1-2^r\\
&=2\left(2^{a_1-2}\prod_{i=2}^rp_i^{a_i-1}(p_i+1)-1+2^{r}\right)+1-2^r\\
&=2^{a_1-1}\prod_{i=2}^rp_i^{a_i-1}(p_i+1)-1+2^{r}.
\end{align*}
This completes the inductive proof.

We have therefore shown that
\begin{equation*}
QC(2^{a_1}p_2^{a_2}\cdots p_r^{a_r})=2^{a_1-2}\left(\prod_{i=2}^rp_i^{a_i-1}(p_i+1)\right)-1+\left\lbrace\begin{array}{lr}
 2^{r-2} & a_1=1\\
 2^{r-1} & a_1=2\\
 2^{r} & a_1\geq 3
\end{array}\right..
\end{equation*}

\end{proof}

\section{Proof of Theorem \ref{thm:qcodd}: $QC(n)$ for Odd $n$}\label{sec:qcodd}

The form of $QC(n)$ for odd $n$ depends on the arithmetic properties of $n$. The structure of the proof is split into the four cases: for $n=\prod_{i=1}^r p_i^{a_i}$ and $p_i$ odd, distinct primes, we consider when either
\begin{enumerate}
\item $p_i\equiv 1\modd 6$ for $1\leq i\leq r$;
\item $p_1=3, a_1=1$ and $p_i\equiv 1\modd 6$ for $2\leq i\leq r$;
\item $p_1=3, a_1\geq 2$ and $p_i\equiv 1\modd 6$ for $2\leq i\leq r$;
\item or $p_i\equiv 5\modd 6$ for some $1\leq i\leq r$.
\end{enumerate}

\subsection{When $p_i\equiv 1\modd 6$ for $1\leq i\leq r$}

We now prove the main theorem for $QC(n)$ when each prime dividing $n$ is one modulo six, which we call the third case of Theorem \ref{thm:qcodd}. We use a proof by induction, with the base case given first below. Recall the fact that
\begin{equation*}
\tau_2(p^a)=\left\lbrace\begin{array}{lr}
 0 &\quad p\equiv 5\modd 6, a\geq 1 \text{ or } p=3, a\geq 2\\
 1 &\quad p=3, a=1\\
 2 &\quad p\equiv 1\modd 6, a\geq 1
\end{array}\right..
\end{equation*}

\begin{theorem}\label{lem:onemodsix}
If $p\equiv 1\modd 6$ and $a\geq 1$ is an integer, then
\begin{equation*}
QC(p^a)=\frac{1}{6}p^{a-1}(p+1)+\frac{2}{3}.
\end{equation*}
\end{theorem}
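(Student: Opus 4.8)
The plan is to list the admissible signatures for $C_{p^a}$ using Harvey's Theorem, evaluate the $T$-value of each with the appropriate case of Theorem \ref{thm:ben}, and add them up. Since $p\equiv 1\modd 6$ forces $p\geq 7$, the integer $n=p^a$ is odd, so condition (iii) of Theorem \ref{thm:harvey} is vacuous; and since the quotient genus is $g=0$, condition (ii) forces $m=\text{lcm}(n_1,n_2,n_3)=p^a$. Condition (i) of Harvey's Theorem then forces at least two of the three periods to equal $p^a$: if at most one did, deleting it would leave an lcm strictly smaller than $p^a$. As every period is at least $2$, the admissible signatures are exactly $(p^a,p^a,p^a)$ together with $(p^k,p^a,p^a)$ for $1\leq k\leq a-1$, so
\begin{equation*}
QC(p^a)=T(p^a,p^a,p^a)+\sum_{k=1}^{a-1}T(p^k,p^a,p^a).
\end{equation*}

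For $(p^a,p^a,p^a)$ I would use part (iii) of Theorem \ref{thm:ben}. By Lemma \ref{lem:tau2}, $\tau_2(p^a)=2$ since $p\equiv 1\modd 6$; also $\phi(p^a)=p^{a-1}(p-1)$, and the product appearing there equals $\frac{p-2}{p-1}$, so that $\phi(p^a)\cdot\frac{p-2}{p-1}=p^{a-1}(p-2)$. This gives
\begin{equation*}
T(p^a,p^a,p^a)=\frac{1}{6}\left(3+2\cdot 2+p^{a-1}(p-2)\right)=\frac{1}{6}\left(7+p^{a-1}(p-2)\right).
\end{equation*}
For a signature $(p^k,p^a,p^a)$ with $1\leq k\leq a-1$, which has two equal periods with $n_1=p^k\neq p^a=n$, I would use part (ii) of Theorem \ref{thm:ben}. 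Here $\tau_1(p^a,p^k)=0$ by Lemma \ref{lem:tau1}, since the exponent $k$ of $p$ in the second argument is neither $0$ nor the exponent $a$ of $p$ in the first argument, placing us in case (i) of that lemma. Since no prime occurs to a common power in all three periods, the product indexed by $w$ is empty, so the remaining term is $\phi(p^k)=p^{k-1}(p-1)=f(p^k)$, whence $T(p^k,p^a,p^a)=\frac{1}{2}f(p^k)$ — exactly the shape already encountered for the analogous signatures $(p^k,2p^a,2p^a)$ in the even case.

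It then remains to assemble the total. Using the identity $\sum_{k=1}^{a-1}f(p^k)=p^{a-1}-1$ from Section \ref{sec:lem}, we get
\begin{align*}
QC(p^a)&=\frac{1}{6}\left(7+p^{a-1}(p-2)\right)+\frac{1}{2}\left(p^{a-1}-1\right)\\
&=\frac{7+p^{a-1}(p-2)+3p^{a-1}-3}{6}=\frac{p^a+p^{a-1}+4}{6}=\frac{1}{6}p^{a-1}(p+1)+\frac{2}{3},
\end{align*}
as claimed; the case $a=1$ is automatically covered, the sum over $k$ then being empty. I expect the only genuinely delicate step to be the second one — reading off $\tau_1(p^a,p^k)=0$ and recognizing that the Euler-$\phi$ product in Theorem \ref{thm:ben}(ii) collapses to $f(p^k)$ — since the rest is Harvey's classification followed by a routine geometric sum.
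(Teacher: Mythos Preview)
Your proof is correct and follows essentially the same approach as the paper: enumerate the admissible signatures $(p^k,p^a,p^a)$ for $1\leq k\leq a$ via Harvey's Theorem, compute each $T$-value from Theorem~\ref{thm:ben} (using $\tau_2(p^a)=2$ when $k=a$ and $\tau_1(p^a,p^k)=0$ with an empty product when $k<a$), and sum using $\sum_{k=1}^{a-1}f(p^k)=p^{a-1}-1$. Your write-up in fact supplies more justification for the vanishing of $\tau_1$ and the collapse of the $w$-product than the paper does.
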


\begin{proof}
By Harvey's Theorem, the only admissible signatures for $p^a$ are of the form
\begin{equation*}
(p^k,p^a,p^a),\quad 1\leq k\leq a.
\end{equation*}
The $T$-values are then
\begin{align*}
T(p^k,p^a,p^a)&=\left\lbrace\begin{array}{lr}
 \frac{1}{2}f(p^k) & 1\leq k\leq a-1\\
 \frac{1}{6}\left(3+2\cdot\tau_2(p^a)+p^{a-1}(p-2)\right) & k=a
\end{array}\right.\\
&=\left\lbrace\begin{array}{lr}
 \frac{1}{2}p^{k-1}(p-1) & 1\leq k\leq a-1\\
 \frac{7}{6}+\frac{1}{6}p^{a-1}(p-2) & k=a
\end{array}\right..
\end{align*}
Summing over $k$, we conclude that
\begin{align*}
QC(p^a)=\sum_{k=1}^aT(p^k,p^a,p^a)&=\frac{1}{2}(p-1)\left(\sum_{k=1}^{a-1}p^{k-1}\right)+\frac{7}{6}+\frac{1}{6}p^{a-1}(p-2)\\
&=\frac{1}{2}(p^{a-1}-1)+\frac{7}{6}+\frac{1}{6}p^{a-1}(p-2)\\
&=\frac{1}{6}p^{a-1}(p+1)+\frac{2}{3}.
\end{align*}

\end{proof}

For subsequent proofs, we make the following convention to ease notation: for a given extended signature for $C_{nq}$ of the form $(aq^x,bq^y,cq^z)$ for odd prime $q$ not dividing $n$ and nonnegative integers $x,y,z$, let $T:=T(a,b,c)$ be the $T$-value of the signature $(a,b,c)$ for $C_n$.

\begin{theorem}\label{thm:onemodsixrecursion}
Suppose $n=\prod_{i=1}^r p_i^{a_i}$ for odd primes $p_i\equiv 1\modd 6$. Then the following recursive formula holds:
\begin{equation*}
QC(n\cdot p_{r+1}^{a_{r+1}})=\left(QC(n)+1-\frac{5}{3}\cdot 2^{r-1}\right)p_{r+1}^{a_{r+1}-1}\left(p_{r+1}+1\right)-1+\frac{5}{3}\cdot 2^r.
\end{equation*}
\end{theorem}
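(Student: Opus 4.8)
The plan is to mimic the structure of the proof of Theorem \ref{thm:recursive2}: extend each admissible signature for $C_n$ to admissible signatures for $C_{n\cdot p_{r+1}^{a_{r+1}}}$ by inserting a power $p_{r+1}^t$ in each period, use Theorem \ref{thm:ben} to compute the $T$-values of the extended signatures, sum over the allowed range of $t$ for a fixed signature of $C_n$, and then sum over all signatures of $C_n$. Since every $p_i\equiv 1\bmod 6$ is odd, Harvey's Theorem forces all admissible signatures for $C_n$ to have the form $\left(p_1^{k_1}\cdots p_r^{k_r},\,n,\,n\right)$ up to permutation (i.e.\ at least two periods equal $n$, and the ``exceptional'' period divides $n$). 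So there is essentially one signature shape, but the $T$-value formula to apply (case (ii) versus case (iii) of Theorem \ref{thm:ben}) depends on whether the exceptional exponents satisfy $k_i=a_i$ for all $i$ (all three periods equal, case (iii), involving $\tau_2$) or not (two periods equal, case (ii)).

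The key computational steps, in order: (1) Enumerate the extended signatures for $C_{n\cdot p_{r+1}^{a_{r+1}}}$: from $\left(p_1^{k_1}\cdots p_r^{k_r},n,n\right)$ one gets $\left(p_1^{k_1}\cdots p_r^{k_r}p_{r+1}^t,\,np_{r+1}^{a_{r+1}},\,np_{r+1}^{a_{r+1}}\right)$ for $0\le t\le a_{r+1}$, and $\left(p_1^{k_1}\cdots p_r^{k_r}p_{r+1}^{a_{r+1}},\,np_{r+1}^t,\,np_{r+1}^{a_{r+1}}\right)$ for $0\le t\le a_{r+1}-1$ (the third-component extension duplicates the second since the last two periods of the $C_n$ signature are equal). (2) For a fixed tuple $(k_1,\dots,k_r)$ with some $k_i<a_i$, the $C_n$ signature has two equal periods with $T$-value $T=\tfrac12\prod f(p_i^{k_i})$; the extended signatures also have two equal periods, so their $T$-values are $\tfrac12\prod f(p_i^{k_i})f(p_{r+1}^t)$ (first family) and $\prod f(p_i^{k_i})f(p_{r+1}^t)$ (second family, all distinct). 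Using $\sum_{t=0}^{a_{r+1}}f(p_{r+1}^t)=p_{r+1}^{a_{r+1}-1}(p_{r+1}-1)$ and $\sum_{t=0}^{a_{r+1}-1}f(p_{r+1}^t)=p_{r+1}^{a_{r+1}-1}$, the contribution sums to $T\cdot p_{r+1}^{a_{r+1}-1}(p_{r+1}+1)$, with the extra $\tfrac12$ bookkeeping from $t=0$ and $t=a_{r+1}$ (where $(n,n,n)$-type extended signatures appear and $\tau_2$ enters) handled exactly as in Theorem \ref{prop:base}. (3) For the all-maximal tuple $k_i=a_i$, the $C_n$ signature is $(n,n,n)$ with $T$-value $\tfrac16(3+2\tau_2(n)+\phi(n)\prod\tfrac{p_i-2}{p_i-1})$; one must carefully compute the $T$-values of the extended signatures at $t=0$, $1\le t\le a_{r+1}-1$, and $t=a_{r+1}$ separately (since $t=0$ and $t=a_{r+1}$ give two-equal-period or three-equal-period signatures requiring cases (ii)/(iii), while midrange $t$ gives distinct periods). (4) Finally, sum over all $2^r$ tuples of $0$/max choices (minus the disallowed all-zero tuple), track the $\tau_2$ terms, and simplify; the claim is that the nonmultiplicative corrections collapse into the stated $-1+\tfrac53\cdot 2^r$ and the coefficient $\left(QC(n)+1-\tfrac53\cdot 2^{r-1}\right)$.

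The main obstacle I anticipate is the careful bookkeeping of the ``boundary'' contributions --- precisely the $\tau_2$ terms and the scattered additive constants ($\tfrac12$, $1$, etc.) coming from signatures with two or three equal periods, which occur whenever an exponent equals $0$ or its maximum. The delicate point is that for $p\equiv 1\bmod 6$ we have $\tau_2(p^a)=2$ for every $a\ge1$, so each time the extended signature becomes genuinely $(m,m,m)$-shaped the formula contributes a $\tfrac16(3+4+\cdots)$ term, and these must be reconciled against the $T$-value already absorbed into $QC(n)$; the factor $\tfrac53$ (as opposed to $\tfrac43$ when $p_1=3$ or $\tfrac12$ in the $5\bmod 6$ case) is exactly what these $\tau_2=2$ contributions produce. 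Organizing the sum so that the $QC(n)$-term, the $2^{r-1}$-term, and the additive constant separate cleanly --- as was done in the proof of Theorem \ref{thm:recursive2} --- is the crux, and I would lay it out in the same tabular style (a table of $T$-values per signature type and per range of $t$, then a table of per-$(k_1,\dots,k_r)$-tuple totals) before assembling the final sum.
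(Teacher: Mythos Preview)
Your overall strategy---extend admissible signatures for $C_n$ by inserting powers of $p_{r+1}$, compute extended $T$-values via Theorem~\ref{thm:ben}, sum over $t$, then over the $C_n$ signatures---matches the paper exactly, and your identification of the $\tau_2(p^a)=2$ contributions as the source of the constant $\tfrac{5}{3}$ is spot on.

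However, there is a genuine gap in your signature classification. You assert that Harvey's Theorem forces every admissible signature for $C_n$ to have two periods equal to $n$, i.e.\ shape $(p_1^{k_1}\cdots p_r^{k_r},n,n)$. This is false once $r\ge 2$. Harvey's condition requires only that for each prime $p_i$ the maximal exponent $a_i$ appear in at least two of the three periods; \emph{which} period is exceptional may vary from prime to prime. Concretely, for $n=p_1p_2$ the signature $(p_1,p_2,p_1p_2)$ is admissible (check: $\mathrm{lcm}$ of any two periods is $p_1p_2$), yet no two of its periods coincide. The paper calls these \emph{type~(ii)} signatures and treats them separately: the non-maximal exponents $k_1,\dots,k_r$ are partitioned among the three periods, giving signatures with all periods distinct. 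Your enumeration in step~(1) omits this entire family, so your count would be too small for $r\ge 2$ and the recursion would not close.

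The good news is that type~(ii) signatures are the easy ones: since all three periods are distinct, case~(i) of Theorem~\ref{thm:ben} applies and the $T$-value is simply $\prod_i f(p_i^{k_i})$ with no factors of $\tfrac12$ and no $\tau_2$. Each such signature extends in three ways (inserting $p_{r+1}^{t}$ into the first, second, or third period), and summing over $t$ gives a clean $T\cdot p_{r+1}^{a_{r+1}-1}(p_{r+1}+1)$ contribution, which slots directly into your final assembly. So the fix is localized: add a ``type~(ii)'' row to your table, carry its contribution through, and your summation will produce the stated formula. Without it, the base case $r=1$ (where your classification \emph{is} correct) goes through, but the inductive step does not.
\qed
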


\begin{proof}
We prove this by induction on the number $r$ of odd primes. The base case is when $r=1$. Let $n=p^aq^b$ for odd primes $p$ and $q$ congruent to one mod six, and positive integers $a$ and $b$. We show that
\begin{equation*}
QC(p^aq^b)=\left(QC(p^a)-\frac{2}{3}\right)q^{b-1}(q+1)+\frac{7}{3}.
\end{equation*}
We extend the admissible signatures for $C_{p^a}$ to signatures for $C_{p^aq^b}$. The only such signatures for $C_{p^a}$ are of the form $(p^k,p^a,p^a)$ for $1\leq k\leq a-1$. Their $T$-values are given in the previous lemma above. Extending these signatures to signatures for $C_{p^aq^b}$, there are the following two types
\begin{align*}
&(\text{Type } 1) \quad (p^kq^h,p^aq^b,p^aq^b) \quad 0\leq k\leq a, 0\leq h\leq b, k\neq h\neq 0 \text{ simultaneously},\\
&(\text{Type } 2) \quad (p^kq^b,p^aq^h,p^aq^b) \quad 0\leq k\leq a-1, 0\leq h\leq b-1.
\end{align*}

For fixed values of $k$, we calculate the $T$-values of both signature types 1 and 2 above, and then sum over the parameter $h$. We have three cases for values of $k$ for extended signatures: $k=0$, $1\leq k\leq a-1$, or $k=a$.

We focus on type 1 signatures first.

\underline{When $k=0$}: The $T$-values are
\begin{equation*}
T(q^h,p^aq^b,p^aq^b)=\left\lbrace\begin{array}{lr}
 \frac{1}{2}f(q^h) & 1\leq h\leq b-1\\
 \frac{1}{2}f(q^b)+\frac{1}{2} & h=b
\end{array}\right..
\end{equation*}
These $T$-values are not related to a signature for $C_{p^a}$, since $k=0$ is not valid for the signatures $(p^k,p^a,p^a)$ of $C_{p^a}$ by Harvey's Theorem.

\underline{When $1\leq k\leq a-1$}: The $T$-values are
\begin{equation*}
T(p^kq^h,p^aq^b,p^aq^b)=\frac{1}{2}f(p^k)f(q^h)=T(p^k,p^a,p^a)\cdot f(q^h)=T\cdot f(q^h),
\end{equation*}
for $0\leq h\leq b$.

\underline{When $k=a$}: The $T$-values are
\begin{equation*}
T(p^aq^h,p^aq^b,p^aq^b)=\left\lbrace\begin{array}{lr}
 \frac{1}{2}f(p^a)+\frac{1}{2} & h=0\\
 \frac{1}{2}f(p^a)f(q^h) & 1\leq h\leq b-1\\
 \frac{1}{6}(3+2\cdot\tau_2(p^aq^b)+f(p^a)f(q^b)) & h=b
\end{array}\right..
\end{equation*}
Recall that $T:=T(p^a,p^a,p^a)=\frac{7}{6}+\frac{1}{6}f(p^a)$, and $\tau_2(p^aq^b)=2^2=4$ because $p$ and $q$ are one modulo six. Then we can rewrite the above $T$-value in terms of the $T$-value for the corresponding original signature for $C_{p^a}$. In particular,
\begin{equation*}
T(p^aq^h,p^aq^b,p^aq^b)=\left\lbrace\begin{array}{lr}
 3\left(T-\frac{7}{6}\right)+\frac{1}{2} & h=0\\
 3\left(T-\frac{7}{6}\right)\cdot f(q^h) & 1\leq h\leq b-1\\
 3\left(T-\frac{7}{6}\right)\cdot f(q^b)+\frac{11}{6} & h=b
\end{array}\right..
\end{equation*}

Therefore, summing the above $T$-values over $h$ in the case of type 1 signatures, we have
\begin{align}
&\left(\sum_{h=1}^b\frac{1}{2}f(q^h)\right)+\frac{1}{2}+T(p^k,p^a,p^a)\left(\sum_{h=0}^bf(q^h)\right)+3\left(T(p^a,p^a,p^a)-\frac{7}{6}\right)+\frac{1}{2}\nonumber\\
&\quad \left(\sum_{h=1}^{b-1}3\left(T(p^a,p^a,p^a)-\frac{7}{6}\right)\right)+\left(T(p^a,p^a,p^a)-\frac{7}{6}\right)f(q^b)+\frac{11}{6}\nonumber\\
&=\frac{1}{2}q^{b-1}+T(p^k,p^a,p^a)\cdot q^{b-1}(q-1)+\left(T(p^a,p^a,p^a)-\frac{7}{6}\right)\cdot q^{b-1}(q+1)+\frac{7}{3}\label{mod6type1}
\end{align}

Now we consider the type 2 signatures $(p^kq^b,p^aq^h,p^aq^b)$ with $0\leq k\leq a-1$ and $0\leq h\leq b-1$. Their $T$-values are
\begin{equation*}
T(p^kq^b,p^aq^h,p^aq^b)=f(p^k)f(q^h)=\left\lbrace\begin{array}{lr}
 f(q^h) & k=0\\
 2T\cdot f(q^h) & 1\leq k\leq a-1
\end{array}\right.,
\end{equation*}
where $T:=T(p^k,p^a,p^a)$. Summing over $h$, we have
\begin{equation}\label{mod6type2}
\left(\sum_{h=0}^{b-1}f(q^h)\right)+2T\cdot \left(\sum_{h=0}^{b-1}f(q^h)\right)=(2T+1)q^{b-1}.
\end{equation}

If we sum over the parameter $k$ in both \eqref{mod6type1} and \eqref{mod6type2}, we will have
\begin{align*}
QC(p^aq^b)&=\frac{1}{2}q^{b-1}(q-1)+\left(\sum_{k=1}^{a-1}T(p^k,p^a,p^a)\right)\cdot(2q^{b-1}+q^{b-1}(q-1))\\
&\quad+q^{b-1}+\left(T(p^a,p^a,p^a)-\frac{7}{6}\right)\cdot q^{b-1}(q+1)+\frac{7}{3}\\
&=\left(\sum_{k=1}^aT(p^k,p^a,p^a)\right)\cdot q^{b-1}(q+1)+q^{b-1}+\frac{1}{2}q^{b-1}(q-1)\\
&\quad-\frac{7}{6}q^{b-1}(q+1)+\frac{7}{3}\\
&=\left(QC(p^a)-\frac{2}{3}\right)q^{b-1}(q+1)+\frac{7}{3}.
\end{align*}
This completes the base case when the number of odd primes $r$ is one.

Now assume the statement holds for some $r\geq 2$. Signatures for $C_{p_1^{a_1}\cdots p_r^{a_r}}$ are of the form
\begin{equation*}
(p_1^{k_1}\cdots p_r^{k_r},p_1^{h_1}\cdots p_r^{h_r},p_1^{\ell_1}\cdots p_r^{\ell_r})
\end{equation*}
for nonnegative integers $k_i,h_i,\ell_i$. By Harvey's Theorem, exactly two of $k_i,h_i$, or $\ell_i$ must equal $a_i$ (otherwise, it violates the least common multiple conditions). After renaming, we will have parameters $k_1,\ldots,k_r$ as the exponents of the $r$ primes $p_1,\ldots,p_r$, and these exponents may be distributed over the three periods. The exponents of all other primes, in each period, will then be maximized at $a_i$. Since the $T$-values of a signature depend on whether two periods agree or not, we consider two cases of where we may place $k_1,\ldots,k_r$:
\begin{enumerate}
\item If $h_i=\ell_i=a_i$ for $1\leq i\leq r$, the signatures are of the form
\begin{equation*}
(p_1^{k_1}\cdots p_r^{k_r},p_1^{a_1}\cdots p_r^{a_r},p_1^{a_1}\cdots p_r^{a_r});
\end{equation*}
That is, all $k_i$ are the exponents of the $p_1,\ldots,p_r$ primes in the first period. These signatures will be called type (i) signatures.
\item If not all $h_i=\ell_i=a_i$ for $1\leq i\leq r$, then there exists subindices $r_1,\ldots,r_{m_1}, s_1,\ldots,s_{m_2},t_1,\ldots,t_{m_3}$ such that the primes in the first period with subindices $r_j$ have exponents $k_{r_j}$; the primes in the second period  with subindices $s_j$ have exponents $k_{s_j}$; and  the primes in the third period with subindices $t_j$ have exponents $k_{t_j}$. These subindices together form a partition of the subindices $1,\ldots,r$ into three parts. This case occurs when not all $k_i$ are the exponents of the $p_1,\ldots,p_r$ primes in the first period. We suppress the primes $p_i$ in the signature for this case, and simply write the exponents. As described above, once a ranging parameter $k_i$ is present on a prime $p_i$ in one period, all other exponents of $p_i$ in the other two periods must equal $a_i$. We simplify the notation by not including these $a_i$ in the period of a signature of this type. Thus, signatures of this case are of the form
\begin{equation*}
(k_{r_1}\cdots k_{r_{m_1}},k_{s_1}\cdots k_{s_{m_2}},k_{t_1}\cdots k_{t_{m_3}}),
\end{equation*}
and will be called type (ii) signatures.
\end{enumerate}

We now extend the signatures for $C_{p_1^{a_1}\cdots p_r^{a_r}}$ of the two types above to signatures for $C_{p_1^{a_1}\cdots p_{r+1}^{a_{r+1}}}$  by multiplying each period by $p_{r+1}$ to some power $t$. Then we calculate the $T$-values of these extended signatures in terms of their previous signatures, and sum over $k_{r+1}$. We will write these $T$-values in terms of their previous signatures for $C_{p_1^{a_1}\cdots p_r^{a_r}}$. The $T$-values of an original signature for a fixed tuple $k_1,\ldots k_r$, following the previous convention, will be simply written as $T$.

\underline{Extensions for Signatures of Type (i)}: Extend the Type (i) signatures 
\begin{equation*}
(p_1^{k_1}\cdots p_r^{k_r},p_1^{a_1}\cdots p_r^{a_r},p_1^{a_1}\cdots p_r^{a_r})
\end{equation*}
to two types 1a and 1b of signatures for $C_{p_1^{a_1}\cdots p_{r+1}^{a_{r+1}}}$, with their respective ranges noted:
\begin{align*}
&(\text{1a})\quad (p_1^{k_1}\cdots p_r^{k_r}p_{r+1}^{k_{r+1}},p_1^{a_1}\cdots p_r^{a_r}p_{r+1}^{a_{r+1}},p_1^{a_1}\cdots p_r^{a_r}p_{r+1}^{a_{r+1}}),\\&\quad\quad 0\leq k_i\leq a_i \text{ with } k_i\neq 0 \text{ simultaneously for } 1\leq i\leq r+1;\\
&(\text{1b})\quad (p_1^{k_1}\cdots p_r^{k_r}p_{r+1}^{a_{r+1}},p_1^{a_1}\cdots p_r^{a_r}p_{r+1}^{k_{r+1}},p_1^{a_1}\cdots p_r^{a_r}p_{r+1}^{a_{r+1}}),\\&\quad\quad 0\leq k_i\leq a_i \text{ with } k_i\neq a_i \text{ simultaneously for } 1\leq i\leq r,\\
&\quad\quad 0\leq k_{r+1}\leq a_{r+1}-1.
\end{align*}

\underline{Extensions for Signatures of Type (ii)}: Extend the type (ii) signatures
\begin{equation*}
(k_{r_1}\cdots k_{r_{m_1}},k_{s_1}\cdots k_{s_{m_2}},k_{t_1}\cdots k_{t_{m_3}}).
\end{equation*}
to three types 2a, 2b, and 2c of signatures for $C_{p_1^{a_1}\cdots p_{r+1}^{a_{r+1}}}$, with their respective ranges noted:
\begin{align*}
&(\text{2a})\quad (k_{r_1}\cdots k_{r_{m_1}}k_{r+1},k_{s_1}\cdots k_{s_{m_2}},k_{t_1}\cdots k_{t_{m_3}}),\\&\quad\quad 0\leq k_{r_i}\leq a_i-1,0\leq k_{s_i}\leq a_i-1, 0\leq k_{t_i}\leq a_i-1 ,\\&\quad\quad 0\leq k_{r+1}\leq a_{r+1};\\
&(\text{2b})\quad (k_{r_1}\cdots k_{r_{m_1}},k_{s_1}\cdots k_{s_{m_2}}k_{r+1},k_{t_1}\cdots k_{t_{m_3}}),\\&\quad\quad 0\leq k_{i}\leq a_i-1 \text{ for all subindices};\\
&(\text{2c})\quad (k_{r_1}\cdots k_{r_{m_1}},k_{s_1}\cdots k_{s_{m_2}},k_{t_1}\cdots k_{t_{m_3}}k_{r+1}),\\&\quad\quad 0\leq k_{i}\leq a_i-1 \text{ for all subindices}.
\end{align*}

Given the five types of extended signatures above, we compute their $T$-values for a fixed $r$-tuple $k_1,\ldots,k_r$. The $T$-value of any signature depends on whether some $k_i=0, k_i=a_i$ or otherwise. We consider these cases separately, starting with the simplest case when some $k_i$ falls in mid-range.

\underline{If $1\leq k_i\leq a_i-1$ for some $1\leq i\leq r$}: The $T$-value of a type (i) signature for $p_1^{a_1}\cdots p_r^{a_r}$ in this case is
\begin{equation*}
T:=T(p_1^{k_1}\cdots p_r^{k_r},p_1^{a_1}\cdots p_r^{a_r},p_1^{a_1}\cdots p_r^{a_r})=\frac{1}{2}\prod_{i=1}^rf(p_i^{k_i}).
\end{equation*}
The $T$-value of the 1a extension is then, for $0\leq k_{r+1}\leq a_{r+1}$,
\begin{equation}\label{modsixmid1a}
T(p_1^{k_1}\cdots p_r^{k_r}p_r^{k_{r+1}},p_1^{a_1}\cdots p_r^{a_r}p_{r+1}^{a_{r+1}},p_1^{a_1}\cdots p_r^{a_r}p_{r+1}^{a_{r+1}})=\frac{1}{2}\prod_{i=1}^{r+1}f(p_i^{k_i})=T\cdot f(p_{r+1}^{k_{r+1}}).
\end{equation}
Also, the $T$-value of the 1b extension is, for $0\leq k_{r+1}\leq a_{r+1}-1$,
\begin{equation}\label{modsixmid1b}
T(p_1^{k_1}\cdots p_r^{k_r}p_r^{a_{r+1}},p_1^{a_1}\cdots p_r^{a_r}p_{r+1}^{k_{r+1}},p_1^{a_1}\cdots p_r^{a_r}p_{r+1}^{a_{r+1}})=\prod_{i=1}^{r+1}f(p_i^{k_i})=2T\cdot f(p_{r+1}^{k_{r+1}}).
\end{equation}

Next, the $T$-value of a type (ii) signature for $p_1^{a_1}\cdots p_r^{a_r}$ in this case is
\begin{equation*}
T:=T(k_{r_1}\cdots k_{r_{m_1}},k_{s_1}\cdots k_{s_{m_2}},k_{t_1}\cdots k_{t_{m_3}})=\prod_{i=1}^rf(p_i^{k_i}).
\end{equation*}
The $T$-value of the 2a extension is then, for $0\leq k_{r+1}\leq a_{r+1}$,
\begin{equation}\label{modsixmid2a}
T(k_{r_1}\cdots k_{r_{m_1}}k_{r+1},k_{s_1}\cdots k_{s_{m_2}},k_{t_1}\cdots k_{t_{m_3}})=\prod_{i=1}^{r+1}f(p_i^{k_i})=T\cdot f(p_{r+1}^{k_{r+1}}).
\end{equation}
Also, the $T$-value of both the 2b and 2c extensions agree. Their value, for $0\leq k_{r+1}\leq a_{r+1}-1$, is
\begin{equation}\label{modsixmid2bc}
\prod_{i=1}^{r+1}f(p_i^{k_i})=T\cdot f(p_{r+1}^{k_{r+1}}).
\end{equation}
We can then sum over the $T$-values of the extensions 1a and 1b, given in \eqref{modsixmid1a} and \eqref{modsixmid1b}. This yields
\begin{align}\label{modsixtotal1}
T\cdot&\left(\sum_{k_{r+1}=0}^{a_{r+1}}f(p_{r+1}^{k_{r+1}})\right)+2T\cdot\left(\sum_{k_{r+1}=0}^{a_{r+1}-1}f(p_{r+1}^{k_{r+1}})\right)\nonumber\\
&=T\cdot p_{r+1}^{a_{r+1}-1}(p_{r+1}-1)+2T\cdot p_{r+1}^{a_{r+1}-1}\nonumber\\
&=T\cdot p_{r+1}^{a_{r+1}-1}(p_{r+1}+1).
\end{align}
A similar calculation gives us the sum over the $T$-values of the extensions 2a, 2b, and 2c. Summing the quantity \eqref{modsixmid2a} and twice the quantity in \eqref{modsixmid2bc} (to count 2b and 2c signature $T$-values), we have
\begin{equation}\label{modsixtotal2}
T\cdot\left(\sum_{k_{r+1}=0}^{a_{r+1}}f(p_{r+1}^{k_{r+1}})\right)+2T\cdot\left(\sum_{k_{r+1}=0}^{a_{r+1}-1}f(p_{r+1}^{k_{r+1}})\right)=T\cdot p_{r+1}^{a_{r+1}-1}(p_{r+1}+1).
\end{equation}
This completes the case when $1\leq k_i\leq a_i-1$ for some $1\leq i\leq r$.

\underline{If $k_i=0$ for all $1\leq i\leq r$}: Signatures for this case will either not have any corresponding previous signature for which they were extended from or have a previous $T$-value of one (for 2a, 2b, or 2c signatures). As such, the following $T$-value calculations do not involve any previous $T$-values, except for the 2a, 2b, and 2c signatures.

The $T$-value of the 1a extensions are
\begin{equation}\label{modsix01a}
T(p_{r+1}^{k_{r+1}},p_1^{a_1}\cdots p_{r+1}^{a_{r+1}},p_1^{a_1}\cdots p_{r+1}^{a_{r+1}})=\left\lbrace\begin{array}{lr}
 \frac{1}{2}f(p_{r+1}^{k_{r+1}}) & 1\leq k_{r+1}\leq a_{r+1}-1\\
 \frac{1}{2}f(p_{r+1}^{a_{r+1}})+\frac{1}{2} & k_{r+1}=a_{r+1}
\end{array}\right..
\end{equation}
The $T$-values of the 1b extensions are, for $0\leq k_{r+1}\leq a_{r+1}-1$,
\begin{equation}\label{modsix01b}
T(p_{r+1}^{a_{r+1}},p_1^{a_1}\cdots p_{r+1}^{k_{r+1}},p_1^{a_1}\cdots p_{r+1}^{a_{r+1}})=f(p_{r+1}^{k_{r+1}}).
\end{equation}
The sum over $k_{r+1}$ for the 1a and 1b signatures given in \eqref{modsix01a} and \eqref{modsix01b} is thus
\begin{align}\label{modsixtotal3}
\frac{1}{2}&\left(\sum_{k_{r+1}=1}^{a_{r+1}}f(p_{r+1}^{k_{r+1}})\right)+\frac{1}{2}+\sum_{k_{r+1}=1}^{a_{r+1}-1}f(p_{r+1}^{k_{r+1}})\nonumber\\
&=\frac{1}{2}(p_{r+1}^{a_{r+1}-1}-1)+\frac{1}{2}+p_{r+1}^{a_{r+1}-1}+p_{r+1}^{a_{r+1}-1}(p_{r+1}-2)\nonumber\\
&=\frac{1}{2}p_{r+1}^{a_{r+1}-1}(p_{r+1}+1).
\end{align}

Next, for type (ii) signatures for $C_{p_1^{a_1}\cdots p_{r}^{a_{r}}}$, since all periods are distinct with all ranging parameters $k_i=0$, it follows that the $T$-value is simply one. We write this $T$-value as $T=1$. Now, the $T$-value for each of the type 2a, 2b, and 2c extended signatures for $C_{p_1^{a_1}\cdots p_{r+1}^{a_{r+1}}}$ are all $f(p_{r+1}^{k_{r+1}})$. The ranges here are $0\leq k_{r+1}\leq a_{r+1}$ for the 2a signatures, and $0\leq k_{r+1}\leq a_{r+1}-1$ for the 2b and 2c signatures. Summing these $T$-values over $k_{r+1}$ yields
\begin{equation}\label{modsixtotal4}
T\cdot\left(\sum_{k_{r+1}=0}^{a_{r+1}}f(p_{r+1}^{k_{r+1}})\right)+2T\cdot\left(\sum_{k_{r+1}=0}^{a_{r+1}-1}f(p_{r+1}^{k_{r+1}})\right)=T\cdot p_{r+1}^{a_{r+1}-1}(p_{r+1}+1).
\end{equation}
This completes the case when $k_i=0$ for all $1\leq i\leq r$.

\underline{If $k_i=a_i$ for all $1\leq i\leq r$}: The $T$-value for type (i) signatures in this case when all ranging parameters $k_i$ are maximal is given as
\begin{align*}
T:=T(p_1^{a_1}\cdots p_r^{a_{r}}&, p_1^{a_1}\cdots p_{r}^{a_{r}},p_1^{a_1}\cdots p_{r}^{a_{r}})\\
&=\frac{1}{6}\left(3+2\cdot \tau_2(p_1^{a_1}\cdots p_{r+1}^{a_{r+1}})+\prod_{i=1}^rf(p_i^{a_i})\right)\\
&=\frac{1}{2}+\frac{1}{3}\cdot 2^r+\frac{1}{6}\prod_{i=1}^r f(p_i^{a_i}).
\end{align*}
The $T$-value for the 1a extensions are
\begin{align*}
&T(p_1^{a_1}\cdots p_{r+1}^{k_{r+1}}, p_1^{a_1}\cdots p_{r+1}^{a_{r+1}},p_1^{a_1}\cdots p_{r+1}^{a_{r+1}})\\
&\quad=\left\lbrace\begin{array}{lr}
 \frac{1}{2}\left(\prod_{i=1}^rf(p_i^{a_i})\right)+\frac{1}{2} & k_{r+1}=0\\
 \frac{1}{2}\left(\prod_{i=1}^rf(p_i^{a_i})\right)\cdot f(p_{r+1}^{k_{r+1}}) & 1\leq k_{r+1}\leq a_{r+1}-1\\
 \frac{1}{2}+\frac{1}{3}\cdot 2^{r+1}+\frac{1}{6}\prod_{i=1}^{r+1}f(p_i^{a_i}) & k_{r+1}=a_{r+1}
\end{array}\right.\\
&\quad=\left\lbrace\begin{array}{lr}
 \left(3T-\frac{3}{2}-2^r\right)+\frac{1}{2} & k_{r+1}=0\\
 \left(3T-\frac{3}{2}-2^r\right)\cdot f(p_{r+1}^{k_{r+1}}) & 1\leq k_{r+1}\leq a_{r+1}-1\\
 \frac{1}{2}+\frac{1}{3}\cdot 2^{r+1}+\left(T-\frac{1}{2}-\frac{1}{3}\cdot 2^r\right)f(p_{r+1}^{a_{r+1}}) & k_{r+1}=a_{r+1}
\end{array}\right..
\end{align*}
There are no other $T$-value of extended signatures possible in this case (see the possible ranges for the extended signatures 1b, 2a, 2b, and 2c). Hence, the sum over $k_{r+1}$ of extended $T$-values for this case is
\begin{align}\label{modsixtotal5}
\left(3T-\frac{3}{2}-2^r\right)&\left(\sum_{k_{r+1}=0}^{a_{r+1}-1}f(p_{r+1}^{k_{r+1}})\right)+\frac{1}{2}+\left(T-\frac{1}{2}-\frac{1}{3}\cdot 2^r\right)f(p_{r+1}^{a_{r+1}})+\frac{1}{2}+\frac{1}{3}\cdot 2^{r+1}\nonumber\\
&=\left(T-\frac{1}{2}-\frac{1}{3}\cdot 2^r\right)\cdot \left(3p_{r+1}^{a_{r+1}-1}+p_{r+1}^{a_{r+1}-1}(p_{r+1}-2)\right)+1+\frac{1}{3}\cdot 2^{r+1}\nonumber\\
&=\left(T-\frac{1}{2}-\frac{1}{3}\cdot 2^r\right)\cdot \left(p_{r+1}^{a_{r+1}-1}(p_{r+1}+1)\right)+1+\frac{1}{3}\cdot 2^{r+1}.
\end{align}
This completes the case when $k_i=a_i$ for all $1\leq i\leq r$.

\underline{If, for $1\leq i\leq r$, at least one $k_i\neq 0$, no $k_i$ satisfy $1\leq k_i\leq a_i-1$, and not all }\\\underline{$k_i=a_i$}: Of the ranging parameters $k_1,\ldots,k_r$ satisfying the above condition, consider the subindices $x_1,\ldots,x_m$ of $1,\ldots,r$ such that $k_{x_i}=a_i$. The fact that $k_i\neq 0$ for some $i$ and no $k_i$ lies in a mid-range implies that such subindices do exist.

The $T$-value of signatures of type (i) are
\begin{equation*}
T:=\frac{1}{2}\prod_{i=1}^mf(p_{x_i}^{k_{x_i}})+\frac{1}{2}.
\end{equation*}
The $T$-value of the 1a extensions are then
\begin{align}\label{modsixx1}
&\left\lbrace\begin{array}{lr}
 \frac{1}{2}\prod_{i=1}^mf(p_{x_i}^{k_{x_i}})+\frac{1}{2} & k_{r+1}=0\\
 \frac{1}{2}\left(\prod_{i=1}^mf(p_{x_i}^{k_{x_i}})\right)\cdot f(p_{r+1}^{k_{r+1}}) & 1\leq k_{r+1}\leq a_{r+1}-1\\
 \frac{1}{2}\left(\prod_{i=1}^mf(p_{x_i}^{k_{x_i}})\right)\cdot f(p_{r+1}^{a_{r+1}})+\frac{1}{2} & k_{r+1}=a_{r+1}
\end{array}\right.\nonumber\\
&=\left\lbrace\begin{array}{lr}
 T & k_{r+1}=0\\
 \left(T-\frac{1}{2}\right)\cdot f(p_{r+1}^{k_{r+1}}) & 1\leq k_{r+1}\leq a_{r+1}-1\\
  \left(T-\frac{1}{2}\right)\cdot f(p_{r+1}^{a_{r+1}})+\frac{1}{2} & k_{r+1}=a_{r+1}\\
\end{array}\right..
\end{align}
Also, the $T$-value of the 1b extensions are, for $0\leq k_{r+1}\leq a_{r+1}-1$,
\begin{equation}\label{modsixx2}
\left(\prod_{i=1}^mf(p_{x_i}^{k_{x_i}})\right)\cdot f(p_{r+1}^{k_{r+1}})=(2T-1)\cdot f(p_{r+1}^{k_{r+1}}).
\end{equation}
Summing over all $k_{r+1}$ given above in \eqref{modsixx1} and \eqref{modsixx2},
\begin{align}\label{modsixtotal6}
T+&\left(T-\frac{1}{2}\right)\cdot\left(\sum_{k_{r+1}=1}^{a_{r+1}}f(p_{r+1}^{k_{r+1}})\right)+\frac{1}{2}+\left(2T-1\right)\cdot\left(\sum_{k_{r+1}=0}^{a_{r+1}-1}f(p_{r+1}^{k_{r+1}})\right)\nonumber\\
&=T+\left(T-\frac{1}{2}\right)\cdot(p_{r+1}^{a_{r+1}-1}(p_{r+1}-1)-1)+\frac{1}{2}+\left(2T-1\right)\cdot p_{r+1}^{a_{r+1}-1}\nonumber\\
&=\left(T-\frac{1}{2}\right)p_{r+1}^{a_{r+1}-1}(p_{r+1}+1)+1.
\end{align}
This completes the final case. 

\underline{Total sum}: Now we collect all terms \eqref{modsixtotal1},\eqref{modsixtotal2},\eqref{modsixtotal3},\eqref{modsixtotal4},\eqref{modsixtotal5}, and\eqref{modsixtotal6} calculated above, and sum over all possible $k_1,\ldots, k_r$. In the sum, we factor $p_{r+1}^{a_{r+1}-1}(p_{r+1}+1)$ from all the collected terms whenever present. We also let $\sum_{k_1,\ldots,k_r} T$ be the sum of all $T$-values for signatures for $C_{p_1^{a_1}\cdots p_r^{a_r}}$. By definition,
\begin{equation*}
QC(p_1^{a_1}\cdots p_r^{a_r})=\sum_{k_1,\ldots,k_r}T.
\end{equation*}
There are $2^r-2$ signatures of the final form presented above, that is, where each $k_i$ is either $0$ or $a_i$, and cannot satisfy $1\leq k_i\leq a_i-1$, nor can all $k_i=a_i$. Each of these signatures has $T$-value given in \eqref{modsixtotal6}. Finally, we can now sum over $k_1,\ldots,k_r$ to obtain
\begin{align*}
QC(p_1^{a_1}\cdots p_{r+1}^{a_{r+1}})&=\left(\sum_{k_1\ldots,k_r}T-\frac{1}{2}(2^r-2)-\frac{1}{3}\cdot 2^r\right)\cdot p_{r+1}^{a_{r+1}-1}(p_{r+1}+1)\\
&\quad+1+\frac{1}{3}\cdot 2^{r+1}+2^r-2\\
&=\left(QC(p_1^{a_1}\cdots p_{r}^{a_{r}})+1-\frac{5}{3}\cdot 2^{r-1}\right)\cdot p_{r+1}^{a_{r+1}-1}(p_{r+1}+1)\\
&\quad -1+\frac{5}{3}\cdot 2^r.
\end{align*}

\end{proof}

We are now in position to prove Theorem \ref{thm:qcodd} when each prime $p_i$ dividing $n$ is one modulo six.

\begin{proof}[Proof of Third Case of Theorem \ref{thm:qcodd}]
Let $n=\prod_{i=1}^rp_i^{a_i}$ with each $p_i\equiv 1\modd 6$. We prove
\begin{equation*}
QC(n)=\frac{1}{6}\prod_{i=1}^{r}p_i^{a_i-1}(p_i+1)-1+\frac{5}{3}\cdot 2^{r-1}
\end{equation*}
by induction on the number $r$ of odd primes dividing $n$. The base case is when $r=1$, which is shown in Theorem \ref{lem:onemodsix}.

Now suppose the statement to be proven holds true for some $r\geq 1$. We show the statement holds for $n\cdot p_{r+1}^{a_{r+1}}$ for some odd prime $p_{r+1}\equiv 1\modd 6$ which is not a factor of $n$, and $a_{r+1}$ is a positive integer. Then using Theorem \ref{thm:onemodsixrecursion}, we have
\begin{align*}
QC(p_1^{a_1}\cdots p_{r+1}^{a_{r+1}})&=\left(QC(p_1^{a_1}\cdots p_{r}^{a_{r}})+1-\frac{5}{3}\cdot 2^{r-1}\right)\cdot p_{r+1}^{a_{r+1}-1}(p_{r+1}+1)\\
&\quad -1+\frac{5}{3}\cdot 2^r\\
&=\left(\frac{1}{6}\prod_{i=1}^{r}p_i^{a_i-1}(p_i+1)\right)\cdot p_{r+1}^{a_{r+1}-1}(p_{r+1}+1)-1+\frac{5}{3}\cdot 2^r.
\end{align*}
We have shown the statement true for $n\cdot p_{r+1}^{a_{r+1}}$. This completes the inductive step. Therefore, for any integer $n$ of the form $n=\prod_{i=1}^rp_i^{a_i}$ and each $p_i\equiv 1\modd 6$,
\begin{equation*}
QC(n)=\frac{1}{6}\prod_{i=1}^{r}p_i^{a_i-1}(p_i+1)-1+\frac{5}{3}\cdot 2^{r-1}.
\end{equation*}
\end{proof}


\subsection{When $p_1=3, a_1=1$ and $p_i\equiv 1\modd 6$ for $2\leq i\leq r$}

We now proceed to derive the formula for $QC(n)$ when $n=\prod_{i=1}^r p_i^{a_i}$ with $p_1=3, a_1=1$ and $p_i\equiv 1\modd 6$ for $2\leq i\leq r$. As in the previous proofs, we incorporate a recursive formula first.

\begin{theorem}\label{thm:recursive3n}
Suppose $n=\prod_{i=1}^r p_i^{a_i}$ where $p_i\equiv 1\modd 6$. Then the following recursive formula holds:
\begin{align*}
QC(3n)=4QC(n)+3-2^{r+1}.
\end{align*}
\end{theorem}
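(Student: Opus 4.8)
The plan is to follow the extension‑of‑signatures method used in the earlier recursions, with two wrinkles special to the multiplier $3$: the prime $3$ occurs only to the first power in $3n$, so it interacts delicately with $\tau_1$, $\tau_2$ and the $\tfrac{p-2}{p-1}$ products in Theorem \ref{thm:ben}; and, crucially, not every admissible signature for $C_{3n}$ is obtained by ``extending'' one for $C_n$. Since $3n$ is odd, clause (iii) of Harvey's Theorem (Theorem \ref{thm:harvey}) is vacuous, so an admissible signature for $C_{3n}$ is a triple $(d_1,d_2,d_3)$ of divisors of $3n$, each at least $2$, with the least common multiple of every two of them equal to $3n$. Writing $d_j=3^{\varepsilon_j}d_j'$ with $d_j'\mid n$ and $\varepsilon_j\in\{0,1\}$, the lcm conditions force at least two of the $\varepsilon_j$ to be $1$. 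I would split these signatures into (a) those in which no period equals $3$ and (b) those in which exactly one period equals $3$; a short lcm argument shows two periods can never both equal $3$ and that family (b) consists of exactly the two signatures $(3,3n,3n)$ and $(3,3n,n)$.

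\emph{The extensions (family (a)).} Deleting all factors of $3$ from the periods of a signature $\sigma$ in family (a) produces an admissible signature $S$ for $C_n$ (its \emph{parent}), because no period of $S$ becomes $1$; conversely $\sigma$ is reconstructed from $S$ by specifying the set $E\subseteq\{1,2,3\}$, $|E|\ge2$, of periods of $S$ that receive a factor of $3$, taken up to the symmetries of $S$. Consequently each admissible $C_n$-signature $S$ has $4$ extensions when its three periods are distinct, $3$ extensions when $S=(X,n,n)$ with $X\neq n$, and $2$ extensions when $S=(n,n,n)$, and
\[
QC(3n)=\sum_{S}\Sigma(S)+T(3,3n,3n)+T(3,3n,n),
\]
where $\Sigma(S)$ is the sum of the $T$-values of the extensions of $S$. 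The computational engine is that $f(3)=3-2=1$ (the exponent of $3$ in $3n$ being $1$), that $\tau_2(3)=1$ (Lemma \ref{lem:tau2}) and hence $\tau_2(3n)=\tau_2(n)$ by multiplicativity (Lemma \ref{lem:tau2multi}), and that the factor $\tfrac{3-2}{3-1}=\tfrac12$ produced by the prime $3$ in the formulas of Theorem \ref{thm:ben} exactly cancels the factor $\phi(3)=2$; so adjoining a factor of $3$ to some of the periods changes the $T$-value only through the term $\tau_1$. Running the three cases through Theorem \ref{thm:ben} and Lemma \ref{lem:tau1} --- noting that $\tau_1(n,X)=\tau_1(3n,X)=\tau_1(3n,3X)=:\delta(X)\in\{0,1\}$, which equals $1$ exactly when the exponent of $p_i$ in $X$ is $0$ or $a_i$ for every $i$ --- I expect $\Sigma(S)=4T(S)$ for $S$ with distinct periods, $\Sigma(S)=4T(S)-\delta(X)$ for $S=(X,n,n)$ with $X\neq n$, and $\Sigma(S)=4T(S)-1-\tau_2(n)=4T(S)-1-2^{r}$ for $S=(n,n,n)$ (using $\tau_2(n)=2^{r}$ since each $p_i\equiv1\modd 6$).

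\emph{Assembling the sum.} Summing over all admissible $C_n$-signatures, the distinct-period ones contribute only $4T(S)$; $(n,n,n)$ contributes the additional $-1-2^{r}$; and among the $(X,n,n)$ with $X\neq n$ precisely those with $X=\prod_{i\in I}p_i^{a_i}$ for $\varnothing\neq I\subsetneq\{1,\dots,r\}$ have $\delta(X)=1$, giving an additional $-1$ each, i.e.\ $-(2^{r}-2)$ in total, the remaining ones contributing nothing further. A direct computation with Theorem \ref{thm:ben} (using Lemma \ref{lem:tau1} and $\gcd(3,3n,n)=1$) gives $T(3,3n,3n)=1$ and $T(3,3n,n)=1$. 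Hence
\[
QC(3n)=4QC(n)+(-1-2^{r})-(2^{r}-2)+2=4QC(n)+3-2^{r+1}.
\]

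\emph{The main obstacle.} The delicate part is the bookkeeping in the first two steps. One must recognize that $(3,3n,3n)$ and $(3,3n,n)$ are genuinely new --- their naive parents have a period equal to $1$ and so are inadmissible for $C_n$ --- and that they contribute exactly the ``$+2$'' by which $3-2^{r+1}$ exceeds the value $1-2^{r+1}$ coming from the extensions alone. One must also track how adjoining a factor of $3$ may merge or split equal periods, changing which clause of Theorem \ref{thm:ben} applies, and handle the second-argument divisibility constraint in Lemma \ref{lem:tau1} (which is what makes $\tau_1(n,X)$, $\tau_1(3n,X)$ and $\tau_1(3n,3X)$ coincide).
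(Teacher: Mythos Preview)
Your proof is correct and complete. Both you and the paper use the same underlying strategy---extend admissible $C_n$-signatures to $C_{3n}$-signatures by inserting powers of $3$, then compare $T$-values---so the approaches are essentially the same. Your organization is, however, noticeably cleaner than the paper's. Where the paper partitions by signature types (i)/(ii) and then by extension types 1a/1b/2a/2b/2c, tabulating $T$-values case by case, you instead separate the $C_{3n}$-signatures into those with a well-defined $C_n$-parent (family (a)) and the two orphans $(3,3n,3n)$, $(3,n,3n)$ (family (b)), and within family (a) you group by the period-equality pattern of the parent. Your key structural observation---that $\phi(3)\cdot\tfrac{3-2}{3-1}=1$ and $f(3)=1$, so the main term $\phi(\gcd)\prod\tfrac{p_i-2}{p_i-1}$ is unchanged under every extension and only the $\tau_1,\tau_2$ corrections move---lets you write the uniform identities $\Sigma(S)=4T(S)$, $\Sigma(S)=4T(S)-\delta(X)$, and $\Sigma(S)=4T(S)-1-2^r$ directly, rather than recomputing each extension from scratch. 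This buys a shorter and more transparent argument; the paper's more explicit case tabulation, on the other hand, generalizes verbatim to the $3^a n$ case (Theorem~\ref{thm:recursive3an}), where $f(3^k)$ is no longer identically $1$ and your cancellation shortcut would need modification.
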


\begin{proof}
The main technique is to extend admissible signatures for $C_n$ to admissible signatures for $C_{3^an}$ by multiplying each period by an appropriate power of 3.

We first prove $QC(3n)=4QC(n)+3-2^{r+1}$. Recall the two signature types for $n$, using the notation from the proof of Theorem \ref{thm:onemodsixrecursion}:
\begin{center}
\begin{align*}
&\text{Type (i) } (p_1^{k_1}\cdots p_r^{k_r},p_1^{a_1}\cdots p_r^{a_r},p_1^{a_1}\cdots p_r^{a_r}),\\
&\text{Type (ii) } (k_{r_1}\cdots k_{r_{m_1}},k_{s_1}\cdots k_{s_{m_2}},k_{t_1}\cdots k_{t_{m_3}}).
\end{align*}
\end{center}
Then their respective $T$-values are
\begin{align}
T(p_1^{k_1}\cdots p_r^{k_r},&p_1^{a_1}\cdots p_r^{a_r},p_1^{a_1}\cdots p_r^{a_r})\nonumber\\
&=\left\lbrace\begin{array}{lr}
 \frac{1}{2}\prod_{i=1}^r f(p_i^{k_i}) & 1\leq k_i\leq a_i-1\\
  & \text{ for some } 1\leq i\leq r\\
  & \\
 \frac{1}{2}+\frac{1}{3}\cdot 2^r+\frac{1}{6}\prod_{i=1}^r f(p_i^{a_i}) & k_i=a_i\\
 & \text{ for all } 1\leq i\leq r\\
 & \\
 \frac{1}{2}\prod_{i=1}^mf(p_{x_i}^{k_{x_i}})+\frac{1}{2} & k_{x_i}=a_i,\\
 & \text{ not all }k_i=a_i,\\
 & \text{ not all } k_i=0,\\
 & \text{ no } k_i \text{ satisfy } 1\leq k_i\leq a_i-1
\end{array}\right.,\label{3.7.1}\\
T(k_{r_1}\cdots k_{r_{m_1}},&k_{s_1}\cdots k_{s_{m_2}},k_{t_1}\cdots k_{t_{m_3}})=\prod_{i=1}^rf(p_i^{k_i})\label{3.7.2}.
\end{align}

We now calculate the $T$-values of the extensions of these two types of signatures. As before, we label the two possible extension types stemming from type (i) above as types 1a and 1b, while the three extensions coming from type (ii) will be labeled 2a, 2b, and 2c. Specifically, these extended signatures are as follows.

\underline{Extensions for Signatures of Type (i)}: Extend the type (i) signatures 
\begin{equation*}
(p_1^{k_1}\cdots p_r^{k_r},p_1^{a_1}\cdots p_r^{a_r},p_1^{a_1}\cdots p_r^{a_r})
\end{equation*}
to the following two types 1a and 1b of signatures for $C_{3p_1^{a_1}\cdots p_{r}^{a_{r}}}$, with their respective ranges noted:
\begin{align*}
&(\text{1a})\quad (3^kp_1^{k_1}\cdots p_r^{k_r},3p_1^{a_1}\cdots p_r^{a_r},3p_1^{a_1}\cdots p_r^{a_r}),\\&\quad\quad 0\leq k_i\leq a_i \text{ with } k_i\neq 0 \text{ simultaneously for all } 1\leq i\leq r,\\
&\quad\quad k=0 \text{ or } k=1;\\
&(\text{1b})\quad (3p_1^{k_1}\cdots p_r^{k_r},p_1^{a_1}\cdots p_r^{a_r},p_1^{a_1}\cdots p_r^{a_r}),\\&\quad\quad 0\leq k_i\leq a_i \text{ with } k_i\neq a_i \text{ simultaneously for all } 1\leq i\leq r.
\end{align*}

\underline{Extensions for Signatures of type (ii)}: Extend the type (ii) signatures
\begin{equation*}
(k_{r_1}\cdots k_{r_{m_1}},k_{s_1}\cdots k_{s_{m_2}},k_{t_1}\cdots k_{t_{m_3}}).
\end{equation*}
to the following three types 2a, 2b, and 2c of signatures for $C_{3p_1^{a_1}\cdots p_{r}^{a_{r}}}$. The only restriction on the $k$ parameters is that no two periods can have all maximal prime powers, for otherwise, you would have a signature of type 1a.
\begin{align*}
&(\text{2a})\quad (k\cdot k_{r_1}\cdots k_{r_{m_1}},k_{s_1}\cdots k_{s_{m_2}},k_{t_1}\cdots k_{t_{m_3}}),\\&\quad\quad k=0 \text{ or } k=1;\\
&(\text{2b})\quad (k_{r_1}\cdots k_{r_{m_1}},k\cdot k_{s_1}\cdots k_{s_{m_2}},k_{t_1}\cdots k_{t_{m_3}}),\\&\quad\quad k=0;\\
&(\text{2c})\quad (k_{r_1}\cdots k_{r_{m_1}},k_{s_1}\cdots k_{s_{m_2}},k\cdot k_{t_1}\cdots k_{t_{m_3}}),\\&\quad\quad k=0.
\end{align*}

We calculate the $T$-values of these extended signatures, and compare them to the $T$-values of the signatures they were extended from.

The $T$-value of a type 1a signature for $C_{3n}$ is
\begin{equation*}
\left\lbrace\begin{array}{lr}
 \frac{1}{2}\prod_{i=1}^r f(p_i^{k_i})\cdot f(3^k) & 1\leq k_i\leq a_i-1 \text{ for some } i,\\
 & k=0 \text{ or } k=1\\
 & \\
 \frac{1}{2}f(3)+\frac{1}{2} & k_i=0 \text{ for all } i\\
 & \\
 \frac{1}{2}\prod_{i=1}^{r}f(p_i^{a_i})+\frac{1}{2} & k_i=a_i \text{ for all } i,\\
 & k=0\\
 & \\
 \frac{1}{2}+\frac{1}{3}\cdot 2^r+\frac{1}{6}\prod_{i=1}^{r}f(p_i^{a_i}) & k_i=a_i \text{ for all } i,\\
 & k=1\\
 & \\
 \frac{1}{2}\prod_{i=1}^mf(p_{x_i}^{k_{x_i}})\cdot f(3^k)+\frac{1}{2} & k_{x_i}=a_i,\\
 & \text{ not all }k_i=a_i,\\
 & \text{ not all } k_i=0,\\
 & \text{ no } k_i \text{ satisfy } 1\leq k_i\leq a_i-1,\\
 & k=0 \text{ or } k=1
\end{array}\right.. 
\end{equation*}
By the definition of $f(x)$, we know $f(3^k)=1$ for $k=0$ and $k=1$. Recall that we use $T$ to denote the $T$-value of the pre-extended signature for $C_{n}$. Comparing each $T$-value of a type 1a signature for $C_{3n}$ above with the corresponding $T$-values for $C_n$ found in \eqref{3.7.1}, we see that the $T$-values of the 1a signatures are given by
\begin{equation}\label{3.7.3}
\left\lbrace\begin{array}{lr}
 T & 1\leq k_i\leq a_i-1 \text{ for some } i,\\
 & k=0 \text{ or } k=1\\
 & \\
 1 & k_i=0 \text{ for all } i\\
 & \\
 3\left(T-\frac{1}{2}-\frac{1}{3}\cdot 2^r\right)+\frac{1}{2} & k_i=a_i \text{ for all } i,\\
 & k=0\\
 & \\
 T & k_i=a_i \text{ for all } i,\\
 & k=1\\
 & \\
 T & k_{x_i}=a_i,\\
 & \text{ not all }k_i=a_i,\\
 & \text{ not all } k_i=0,\\
 & \text{ no } k_i \text{ satisfy } 1\leq k_i\leq a_i-1,\\
 & k=0 \text{ or } k=1
\end{array}\right.. 
\end{equation}

The $T$-value of a type 1b signature for $C_{3n}$ is
\begin{equation*}
\prod_{i=1}^rf(p_i^{k_i}).
\end{equation*}
We compare this value with the $T$-value for signature of type (i), considering each case carefully. This gives the $T$-value of 1b signatures for $C_{3n}$ in terms of the $T$-values of type (i) for $C_n$ as
\begin{equation}\label{3.7.5}
\prod_{i=1}^rf(p_i^{k_i})=\left\lbrace\begin{array}{lr}
 2T & 1\leq k_i\leq a_i-1 \text{ for some } i\\
 1 & k_i=0 \text{ for all } i\\
 2T-1 & \text{otherwise}
\end{array}\right..
\end{equation}

The type 2a, 2b, and 2c signatures are much simpler to analyze, because the $T$-value of type (ii) signatures for $C_n$ (for fixed $r$-tuple $k_1,\ldots,k_r)$ is given in \eqref{3.7.2} as $T:=\prod_{i=1}^r f(p_i^{k_i})$. Then the $T$-value of a type 2a signature for $C_{3n}$ is
\begin{equation}\label{3.7.4}
T(k\cdot k_{r_1}\cdots k_{r_{m_1}},k_{s_1}\cdots k_{s_{m_2}},k_{t_1}\cdots k_{t_{m_3}})=\prod_{i=1}^rf(p_i^{k_i})\cdot f(3^k)=\prod_{i=1}^rf(p_i^{k_i})=T.
\end{equation}
We note for later use that the above $T$-value occurs when $k=0$ and when $k=1$. That is, we will need to count this $T$-value twice.

Both of the remaining signatures types 2b and 2c have the same $T$-value, namely $\prod_{i=1}^rf(p_i^{k_i})=T$. We will count this $T$-value twice in the following total summation.

\underline{Total summation}: Using \eqref{3.7.3}, \eqref{3.7.5}, \eqref{3.7.4}, and the remark regarding 2b and 2c signatures in the previous paragraph, we count total sums over each possible combination of $k_1,\ldots,k_r$. The total of all $T$-values of type (i) extensions is
\begin{equation}\label{3.7.6}
\left\lbrace\begin{array}{lr}
 4T & 1\leq k_i\leq a_i-1 \text{ for some } i\\
 2 & k_i=0 \text{ for all } i\\
 4T-\frac{3}{2}-2^r+\frac{1}{2} & k_i=a_i \text{ for all } i\\
 4T-1 & \text{otherwise}
\end{array}\right..
\end{equation}
The total of all $T$-values of type (ii) extensions is $4T$. For the final case of \eqref{3.7.6} above, there are $2^r-2$ possible choices of $r$-tuples $k_1,\ldots,k_r$ such that at least one $k_i\neq 0$ and not all $k_i=a_i$. Therefore, taking the sum over all $T$-values of admissible signatures $(n_1',n_2',n_3')$ for $C_{3n}$ in terms of the $T$-values of signatures $(n_1,n_2,n_3)$ for $C_n$ is
\begin{align*}
QC(3n)&=\sum_{(n_1',n_2',n_3')}T(n_1',n_2',n_3')\\
&=4\cdot\sum_{(n_1,n_2,n_3)}T(n_1,n_2,n_3)+2-1-2^r-(2^r-2)\\
&=4\cdot QC(n)+3-2^{r+1}.
\end{align*}

\end{proof}

Because we have already derived $QC(n)$ when $n$ is divisible by primes congruent to one mod six, we can easily prove the formula for $QC(3n)$.

\begin{proof}[Proof of Second Case of Theorem \ref{thm:qcodd}]
We have shown before that, for $n=\prod_{i=1}^rp_i^{a_i}$ with $p_i\equiv 1\modd 6$ for all $i$,
\begin{equation*}
QC(n)=\frac{1}{6}\prod_{i=1}^rp_i^{a_i-1}(p_i+1)-1+\frac{5}{3}\cdot 2^{r-1}.
\end{equation*}
Then by Theorem \ref{thm:recursive3n},
\begin{align*}
QC(3n)&=4\cdot QC(n)+3-2^{r+1}\\
&=\frac{1}{6}\prod_{i=1}^r4\cdot p_i^{a_i-1}(p_i+1)-4+\frac{5}{3}\cdot 2^{r+1}+3-2^{r+1}\\
&=\frac{1}{6}\prod_{i=1}^r4\cdot p_i^{a_i-1}(p_i+1)-1+\frac{4}{3}\cdot 2^{r},
\end{align*}
as desired.
\end{proof}


\subsection{When $p_1=3, a_1\geq 2$ and $p_i\equiv 1\modd 6$ for $2\leq i\leq r$}

We can now prove the $QC(3^an)$ formula of Theorem \ref{thm:qcodd} for $a\geq 2$ and $n$ divisible by primes congruent to one mod six.

\begin{theorem}\label{thm:recursive3an}
Suppose $n=\prod_{i=1}^r p_i^{a_i}$ where $p_i\equiv 1\modd 6$. Then the following recursive formula holds:
\begin{align*}
QC(3^an)&=4\cdot 3^{a-1}\cdot QC(n)+4\cdot 3^{a-1}-1+(1-10\cdot 3^{a-2})\cdot 2^r, \text{ for }a\geq 2.
\end{align*}
\end{theorem}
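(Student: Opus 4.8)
The plan is to establish a \emph{reduction} recursion expressing $QC(3^an)$ in terms of $QC(3^{a-1}n)$, then to unwind it down to $QC(3n)$ and invoke Theorem~\ref{thm:recursive3n}. Throughout, $n=\prod_{i=1}^rp_i^{a_i}$ with every $p_i\equiv 1\modd 6$, so $\tau_2(p_i^{a_i})=2$ for each $i$; the crucial arithmetic input is that, by Lemmas~\ref{lem:tau2multi} and~\ref{lem:tau2}, $\tau_2(3^b)=0$ for $b\ge 2$, hence $\tau_2(3^an)=0$ for all $a\ge 2$, whereas $\tau_2(3n)=2^r$. This single fact is what makes the step relating $QC(9n)$ to $QC(3n)$ behave differently from the steps relating $QC(3^bn)$ to $QC(3^{b-1}n)$ with $b\ge 3$.

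First I would set up the signature bookkeeping exactly as in the proofs of Theorems~\ref{thm:onemodsixrecursion} and~\ref{thm:recursive3n}. By Harvey's Theorem an admissible signature for $C_{3^an}$ carries, for each prime $p\in\{3,p_1,\dots,p_r\}$, the maximal power of $p$ in at least two of its three periods; thus a signature is specified by choosing, for each prime, the period with the (possibly) smaller exponent and the value of that exponent. As before I split into ``type (i)'' signatures (two periods equal, of the form $(A,\,3^a\prod p_i^{a_i},\,3^a\prod p_i^{a_i})$) and ``type (ii)'' signatures (all three periods distinct), and I let $k_0\in\{0,1,\dots,a\}$ be the ranging $3$-exponent, subject to the usual exclusions. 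Deleting one factor of $3$ from the two maximal positions (and, when $k_0=a$, also lowering $k_0$ to $a-1$) sends each admissible signature of $C_{3^an}$ to one of $C_{3^{a-1}n}$, and this is the correspondence I track.

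Next I would compute, via Theorem~\ref{thm:ben}, the $T$-value of each signature of $C_{3^an}$ in terms of the $T$-value of its image signature for $C_{3^{a-1}n}$. The casework is organized by whether $k_0=0$, $1\le k_0\le a-1$, or $k_0=a$, and — orthogonally — by whether the exponents on the $p_i$ are all $a_i$, all $0$, or mixed, mirroring the ``Type I / Type II'' analysis in the proof of Theorem~\ref{thm:recursiveqc}. Summing over $k_0$ uses $f(3^0)=f(3^1)=1$ together with the identities $\sum_{k=0}^{a-1}f(3^k)=3^{a-1}$ and $\sum_{k=0}^{a}f(3^k)=2\cdot 3^{a-1}$; this produces an overall factor $3$ relating the two $QC$-values, plus a bounded correction coming only from the finitely many ``boundary'' signatures (all $p_i$-exponents $0$, or all equal to $a_i$), where the $\tfrac12(3+2\tau_2+\cdots)$ form of Theorem~\ref{thm:ben} contributes extra additive constants. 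Carefully collecting these — there are $2^r-2$, respectively $1$, signatures in the relevant boundary families, as in the proof of Theorem~\ref{thm:onemodsixrecursion} — should give
\[
QC(3^an)=3\,QC(3^{a-1}n)+2-c\cdot 2^{r},\qquad c=\begin{cases}3,& a=2,\\[1mm] 2,& a\ge 3,\end{cases}
\]
the jump at $a=2$ being exactly the $2\tau_2(3n)=2\cdot 2^r$ term present for $C_{3n}$ but absent for $C_{3^an}$ with $a\ge 2$.

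Finally I would unwind. Iterating the recursion from $a$ down to $2$ gives $QC(3^an)=3^{a-2}QC(9n)+(2-2\cdot 2^r)\tfrac{3^{a-2}-1}{2}$ for all $a\ge 2$ (the identity being vacuous at $a=2$), while the $a=2$ instance together with Theorem~\ref{thm:recursive3n} gives $QC(9n)=3\bigl(4\,QC(n)+3-2^{r+1}\bigr)+2-3\cdot 2^r=12\,QC(n)+11-9\cdot 2^r$. Substituting and simplifying via $12\cdot 3^{a-2}=4\cdot 3^{a-1}$, $\;11\cdot 3^{a-2}+3^{a-2}-1=4\cdot 3^{a-1}-1$, and $\;9\cdot 3^{a-2}+3^{a-2}-1=10\cdot 3^{a-2}-1$ yields precisely $QC(3^an)=4\cdot 3^{a-1}QC(n)+4\cdot 3^{a-1}-1+(1-10\cdot 3^{a-2})2^r$, as claimed. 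I expect the $T$-value casework in the middle step to be the main obstacle: the boundary signatures pick up extra $\pm\tfrac12$ and $\tau_2$-dependent constants, and these must be enumerated and balanced so that everything outside the leading factor $3$ collapses to the stated constant, all while keeping the $a=2$ anomaly isolated; the rest of the argument is a routine telescoping.
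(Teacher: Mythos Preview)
Your approach is correct and genuinely different from the paper's. The paper extends signatures for $C_n$ \emph{directly} to signatures for $C_{3^an}$ in one step (mirroring its proof of Theorem~\ref{thm:recursive3n}), doing all of the $3$-exponent bookkeeping at once and summing the resulting $T$-values to obtain the closed-form relation between $QC(3^an)$ and $QC(n)$. You instead propose the intermediate recursion $QC(3^an)=3\,QC(3^{a-1}n)+2-c\cdot 2^r$ (with the expected jump in $c$ at $a=2$ coming from $\tau_2(3)=1$ versus $\tau_2(3^b)=0$ for $b\ge 2$), telescope it, and plug in Theorem~\ref{thm:recursive3n} at the bottom. This is the exact analogue, for powers of $3$, of the paper's own Theorem~\ref{thm:recursiveqc} for powers of $2$, and I have checked that your one-step recursion and its unwinding both agree with the closed forms for $QC(3^an)$ derived elsewhere in the paper. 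The trade-off: your per-step casework is lighter (only the $3$-exponent moves, and the $f(3^k)$ sums are particularly simple), at the cost of an extra telescoping stage and a separate handling of the $a=2$ anomaly; the paper's single-step derivation avoids the telescoping but carries heavier casework, tracking all ranges $0\le k\le a$ for the $3$-exponent simultaneously against the various configurations of the $p_i$-exponents. One caution: the ``should give'' clause for your one-step recursion is where all the content lives, and it requires the same careful enumeration of boundary signatures (all $k_i=0$, all $k_i=a_i$, and the $2^r-2$ mixed extremal tuples) that the paper carries out; make sure that enumeration is actually written out, since that is where the constants $2$ and $c\cdot 2^r$ must emerge.
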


\begin{proof}
We follow the same method and notation as in the proof of $QC(3n)$. For the two types of signatures for $n$,
\begin{center}
\begin{enumerate}
\item $(p_1^{k_1}\cdots p_r^{k_r},p_1^{a_1}\cdots p_r^{a_r},p_1^{a_1}\cdots p_r^{a_r}),$
\item $(k_{r_1}\cdots k_{r_{m_1}},k_{s_1}\cdots k_{s_{m_2}},k_{t_1}\cdots k_{t_{m_3}}),$
\end{enumerate}
\end{center}
we extend to signatures of types 1a, 1b, 2a, 2b, and 2c for $C_{3^an}$ as before:
\begin{align*}
&(\text{1a})\quad (3^kp_1^{k_1}\cdots p_r^{k_r},3^ap_1^{a_1}\cdots p_r^{a_r},3^ap_1^{a_1}\cdots p_r^{a_r}),\\&\quad\quad 0\leq k_i\leq a_i \text{ with } k_i\neq 0 \text{ and } k\neq 0 \text{ simultaneously for all } 1\leq i\leq r,\\
&\quad\quad 0\leq k\leq a;\\
&(\text{1b})\quad (3^ap_1^{k_1}\cdots p_r^{k_r},3^kp_1^{a_1}\cdots p_r^{a_r},3^ap_1^{a_1}\cdots p_r^{a_r}),\\&\quad\quad 0\leq k_i\leq a_i \text{ with } k_i\neq a_i \text{ simultaneously for all } 1\leq i\leq r,\\
&\quad\quad 0\leq k\leq a-1;\\
&(\text{2a})\quad (k\cdot k_{r_1}\cdots k_{r_{m_1}},k_{s_1}\cdots k_{s_{m_2}},k_{t_1}\cdots k_{t_{m_3}}),\\&\quad\quad 0\leq k\leq a;\\
&(\text{2b})\quad (k_{r_1}\cdots k_{r_{m_1}},k\cdot k_{s_1}\cdots k_{s_{m_2}},k_{t_1}\cdots k_{t_{m_3}}),\\&\quad\quad 0\leq k\leq a-1;\\
&(\text{2c})\quad (k_{r_1}\cdots k_{r_{m_1}},k_{s_1}\cdots k_{s_{m_2}},k\cdot k_{t_1}\cdots k_{t_{m_3}}),\\&\quad\quad 0\leq k\leq a-1.
\end{align*}
The $T$-value of a type 1a signature is
\begin{equation*}
\left\lbrace\begin{array}{lr}
 \frac{1}{2}\left(\prod_{i=1}^r f(p_i^{k_i})\right)f(3^k) & 1\leq k_i\leq a_i-1 \text{ for some }1\leq i\leq r,\\
 & 0\leq k\leq a\\
 & \\
 \frac{1}{2}\left(\prod_{i=1}^rf(p_i^{k_i})\right)f(3) & 1\leq k\leq a-1\\
 & \\ 
 \frac{1}{2}f(3^k) & 1\leq k\leq a-1 \text{ and } k_i=0 \text{ for all } i\\
 & \\
 \frac{1}{2}\left(\prod_{i=1}^r f(p_i^{a_i})\right)f(3^k) & 1\leq k\leq a-1\text{ and } k_i=a_i \text{ for all }i\\
 & \\
 \frac{1}{2}\prod_{i=1}^rf(p_i^{a_i})+\frac{1}{2} & k=0 \text{ and } k_i=a_i \text{ for all }i\\
 & \\
 \frac{1}{6}(3+2\cdot \tau_2(3^an)+\left(\prod_{i=1}^rf(p_i^{a_i})\right)f(3^a)) & k=a \text{ and } k_i=a_i \text{ for all }i\\
 & \\
 \frac{1}{2}f(3^a)+\frac{1}{2} & k=a \text{ and } k_i=0 \text{ for all }i\\
 & \\
 \frac{1}{2}\prod_{i=1}^mf(p_{x_i}^{k_{x_i}})+\frac{1}{2} & k=0 \text{ and }k_{x_i}=a_i \\
 & \\
 \frac{1}{2}\left(\prod_{i=1}^mf(p_{x_i}^{k_{x_i}})\right)f(3^a)+\frac{1}{2} & k=a \text{ and }k_{x_i}=a_i
\end{array}\right..
\end{equation*}
(As always, the $k_{x_i}$ denote those parameters $k_i$ which achieve their maximum $a_i$, with the condition that not all $k_i$ reach the maximum $a_i$, but there is at least one $k_i$ that does.) Now compare the $T$-value for a type 1a signature with the corresponding $T$-value for a type (i) signature (found in \eqref{3.7.1}). Recall, by the definition of $f(x)$, that
\begin{equation*}
f(3^k)=\left\lbrace\begin{array}{lr}
 1 & k=0\\
 2\cdot 3^{k-1} & 1\leq k\leq a-1\\
 3^{a-1} & k=a
\end{array}\right..
\end{equation*}
Writing the original $T$-value as simply $T$, we have the $T$-value of a type 1a signature written as
\begin{equation}\label{3.8.1}
\left\lbrace\begin{array}{lr}
 T\cdot f(3^k) & 1\leq k_i\leq a_i-1 \text{ for some }1\leq i\leq r,\\
 & 0\leq k\leq a\\
 & \\
 \left(T-\frac{1}{2}\right)\cdot f(3^k) & 1\leq k\leq a-1\text{ and } k_{x_i}=a_i\\
 & \\ 
 \frac{1}{2}f(3^k) & 1\leq k\leq a-1 \text{ and } k_i=0 \text{ for all } i\\
 & \\
 3f(3^k)\left(T-\frac{1}{2}-\frac{1}{3}\cdot 2^r\right) & 1\leq k\leq a-1\text{ and } k_i=a_i \text{ for all }i\\
 & \\
 3\left(T-\frac{1}{2}-\frac{1}{3}\cdot 2^r\right)+\frac{1}{2} & k=0 \text{ and } k_i=a_i \text{ for all }i\\
 & \\
 f(3^a)\left(T-\frac{1}{2}-\frac{1}{3}\cdot 2^r\right)+\frac{1}{2} & k=a \text{ and } k_i=a_i \text{ for all }i\\
 & \\
 \frac{1}{2}f(3^a)+\frac{1}{2} & k=a \text{ and } k_i=0 \text{ for all }i\\
 & \\
 T & k=0 \text{ and }k_{x_i}=a_i \\
 & \\
 f(3^a)\left(T-\frac{1}{2}\right)+\frac{1}{2} & k=a \text{ and }k_{x_i}=a_i
\end{array}\right..
\end{equation}

Next, the $T$-value of a type 1b signature is $\prod_{i=1}^rf(p_i^{k_i})\cdot f(3^k)$ for $0\leq k\leq a-1$ and not all $k_i=a_i$. Since the $T$-value of a type (i) signature has several cases, we need to rewrite the $T$-value of a type 1b signature in terms of the $T$-value of a type (i) signature in each case. Thus, the $T$-value of a type 1b signature is written as
\begin{equation}\label{3.8.2}
\prod_{i=1}^rf(p_i^{k_i})\cdot f(3^k)=\left\lbrace\begin{array}{lr}
 2T & k=0 \text{ and } 1\leq k_i\leq a_i-1 \text{ for some }i\\
 & \\
 1 & k=0 \text{ and } k_i=0 \text{ for all }i\\
 & \\
 2\left(T-\frac{1}{2}\right) & k=0 \text{ and } k_{x_i}=a_i\\
 & \\
 2f(3^k)T & 1\leq k\leq a-1 \text{ and } 1\leq k_i\leq a_i-1 \text{ for some }i\\
 & \\
 f(3^k) & 1\leq k\leq a-1 \text{ and } k_i=0 \text{ for all }i\\
 & \\
 2f(3^k)\left(T-\frac{1}{2}\right) & 1\leq k\leq a-1 \text{ and } k_{x_i}=a_i
\end{array}\right..
\end{equation}

Thus, for a fixed $r$-tuple $k_1,\ldots,k_r$, we sum over $k$ in each of the cases for $k_1,\ldots,k_r$. We see that the total sum over $k$ of type 1a and 1b signatures from \eqref{3.8.1} and \eqref{3.8.2} becomes
\begin{equation*}
\left\lbrace\begin{array}{lr}
 T\cdot\left(\left(\sum_{k=0}^a f(3^k)\right)+2+\left(\sum_{k=1}^{a-1}2f(3^k)\right)\right) & 1\leq k_i\leq a_i-1 \text{ for some }i\\
 & \\
 \left(T-\frac{1}{2}-\frac{1}{3}\cdot 2^r\right)\cdot\left(3+3^{a-1}+3\sum_{k=1}^{a-1}f(3^k)\right)+1 & k_i=a_i \text{ for all }i\\
 & \\
 2\cdot 3^{a-1} & k_i=0 \text{ for all } i\\
 & \\ 
 \left(T-\frac{1}{2}\right)\left(4\cdot 3^{a-1}-1\right)+T+\frac{1}{2} & \text{otherwise}
\end{array}\right..
\end{equation*}
This simplifies to
\begin{equation}\label{3.8.4}
\left\lbrace\begin{array}{lr}
 4\cdot 3^{a-1}T & 1\leq k_i\leq a_i-1 \text{ for some }i\\
 & \\
 \left(T-\frac{1}{2}-\frac{1}{3}\cdot 2^r\right)\cdot\left(4\cdot 3^{a-1}\right)+1 & k_i=a_i \text{ for all }i\\
 & \\
 2\cdot 3^{a-1} & k_i=0 \text{ for all } i\\
 & \\ 
 \left(T-\frac{1}{2}\right)\left(4\cdot 3^{a-1}-1\right)+T+\frac{1}{2} & \text{otherwise}
\end{array}\right..
\end{equation}

Next consider type 2a, 2b, and 2c signatures. Recall that the $T$-value of a type (ii) signature for $C_n$ is simply $T:=\prod_{i=1}^r f(p_i^{k_i})$. Now, the $T$-value of a type 2a signature is $\prod_{i=1}^rf(p_i^{k_i})\cdot f(3^k)$ for $0\leq k\leq a$. Then the $T$-value of type 2a signature is just $T\cdot f(3^k)$. Similarly, the $T$-value of types 2b and 2c signatures is $\prod_{i=1}^r f(p_i^{k_i})f(3^k)$ for $0\leq k\leq a-1$. This can be written as just $T\cdot f(3^k)$. Then, summing over $k$ for these signatures, we see that the total sum of $T$-values for types 2a, 2b and 2c signatures is
\begin{align}
T\cdot\left(\sum_{k=0}^a f(3^k)\right)+2T\cdot\left(\sum_{k=0}^{a-1} f(3^k)\right)&=T\cdot \left(3\left(\sum_{k=0}^{a-1} f(3^k)\right)+f(3^a)\right)\nonumber\\
&=T\cdot (3^a+3^{a-1})=T\cdot (4\cdot 3^{a-1})\label{3.8.3}.
\end{align}

Finally, take the sum over all $r$-tuples $k_1,\ldots,k_r$ giving admissible signatures $(n_1,n_2,n_3)$ for $C_n$. This will give us a sum over all possible admissible signatures $(n_1',n_2',n_3')$ of $C_{3^an}$. Care must be taken when considering the final case of \eqref{3.8.4}. Of the signatures for $3^an$ satisfying the final case, there are $2^r-2$ such possibilities (since each $k_i$ for $1\leq i\leq r$ can either be $a_i$ or 0, but not all are 0 and not all are $a_i$). Thus,
\begin{align*}
QC(3^an)&=\sum_{(n_1',n_2',n_3')}T(n_1',n_2',n_3')\\
&=4\cdot 3^{a-1}\cdot\sum_{(n_1,n_2,n_3)}T(n_1,n_2,n_3)+4\cdot 3^{a-1}\left(-\frac{1}{2}-\frac{1}{3}\cdot 2^r\right)\\
&\quad+1+2\cdot 3^{a-1}+\left(-\frac{1}{2}\cdot 4\cdot 3^{a-1}+1\right)\cdot\left(2^r-2\right)\\
&=4\cdot 3^{a-1}QC(n)-1+4\cdot 3^{a-1}+\left(1-10\cdot 3^{a-2}\right)\cdot 2^r.
\end{align*}

\end{proof}



Since we know the value of $QC(n)$ in this case, $QC(3^an)$ can be easily derived.

\begin{proof}[Proof of First Subcase of Theorem \ref{thm:qcodd}]
Assume $n=\prod_{i=1}^r p_i^{a_i}$ for $p_i\equiv 1\modd 6$ for each $i$. By Theorem \ref{thm:recursive3an}, for any $a\geq 2$,
\begin{equation*}
QC(3^an)=4\cdot 3^{a-1}QC(n)-1+4\cdot 3^{a-1}+\left(1-10\cdot 3^{a-2}\right)\cdot 2^r.
\end{equation*}
Because $QC(n)=\frac{1}{6}\prod_{i=1}^{r}p_i^{a_i-1}(p_i+1)-1+\frac{5}{3}\cdot 2^{r-1}$, it follows that
\begin{equation*}
QC(3^an)=\frac{1}{6}\prod_{i=1}^r 3^{a-1}4p_i^{a_i-1}(p_i+1)-1+2^r.
\end{equation*}
\end{proof}

\subsection{When $p_i\equiv 5\modd 6$ for some $1\leq i\leq r$}

We prove the explicit form of $QC(n)$ in the final remaining case when $n$ is divisible by a prime congruent to five mod six. The goal is to show, when $n=\prod_{i=1}^r p_i^{a_i}$ with $p_i\equiv 5\modd 6$ for some $i$ and $a_i$ positive integers,
\begin{equation*}
QC(n)=\frac{1}{6}\prod_{i=1}^rp_i^{a_i-1}(p_i+1)-1+2^{r-1}.
\end{equation*}
Consider first when $r=1$, and write $p_1=p, a_1=a$ so that $p\equiv 5\modd 6$. We derive $QC(n)$ in this case.

\begin{theorem}\label{thm:fivemodsixoneprime}
If $p\equiv 5\modd 6$ and $a$ is a positive integer, then 
\begin{equation*}
QC(p^a)=\frac{1}{6}p^{a-1}(p+1).
\end{equation*}
\end{theorem}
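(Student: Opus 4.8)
The plan is to mimic the proof of Theorem~\ref{lem:onemodsix}, the only change being the value of $\tau_2$. First I would invoke Harvey's Theorem (Theorem~\ref{thm:harvey}) in the quasiplatonic case $g=0$, $k=3$: since $p$ is odd the parity condition~(iii) is vacuous, condition~(ii) forces $m=n=p^a$, and the least-common-multiple condition~(i) then forces at least two of the three periods to equal $p^a$. Hence the admissible signatures for $C_{p^a}$ are exactly $(p^k,p^a,p^a)$ with $1\le k\le a$, just as in Theorem~\ref{lem:onemodsix}.

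Next I would compute the $T$-value of each such signature using Theorem~\ref{thm:ben}. For $1\le k\le a-1$ the signature $(p^k,p^a,p^a)$ has two equal periods, so part~(2) of Theorem~\ref{thm:ben} applies with $n_1=p^k$ and $n=p^a$; here $\tau_1(p^a,p^k)=0$ by Lemma~\ref{lem:tau1} (the exponent $k$ is neither $0$ nor $a$, which triggers case~(i) of that lemma), and one obtains $T(p^k,p^a,p^a)=\tfrac12 f(p^k)=\tfrac12 p^{k-1}(p-1)$. For $k=a$ all three periods coincide, so part~(3) applies; since $p\equiv 5\pmod 6$, Lemma~\ref{lem:tau2} gives $\tau_2(p^a)=0$, so $T(p^a,p^a,p^a)=\tfrac16\bigl(3+p^{a-1}(p-2)\bigr)$. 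This is exactly the $p\equiv 1\pmod 6$ computation with the $2\tau_2(p^a)=4$ contribution replaced by $0$, i.e.\ with the $\tfrac23$ term deleted.

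Finally I would sum over $k$. Using the summation identity $\sum_{k=1}^{a-1}f(p^k)=p^{a-1}-1$ (part~(iii) of the summation lemma), one gets $QC(p^a)=\tfrac12(p^{a-1}-1)+\tfrac16\bigl(3+p^{a-1}(p-2)\bigr)$, which collapses to $\tfrac16 p^{a-1}(p+1)$. The case $a=1$ is covered by the same formula, since then the sum over $1\le k\le a-1$ is empty and $QC(p)=T(p,p,p)=\tfrac16\bigl(3+(p-2)\bigr)=\tfrac16(p+1)$; moreover no lower-bound hypothesis on $p^a$ is needed because every prime $p\equiv 5\pmod 6$ already satisfies $p\ge 5$.

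There is no real obstacle here: the argument is a direct specialization of Theorem~\ref{lem:onemodsix}. The only points requiring a little care are verifying that $\tau_1(p^a,p^k)$ genuinely vanishes for $1\le k\le a-1$ via the divisibility bookkeeping in Lemma~\ref{lem:tau1}, and correctly reading off $\tau_2(p^a)=0$ from Lemma~\ref{lem:tau2} in the case $p\equiv 5\pmod 6$.
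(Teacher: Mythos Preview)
Your proposal is correct and follows essentially the same approach as the paper: identify the admissible signatures $(p^k,p^a,p^a)$ via Harvey's Theorem, compute their $T$-values from Theorem~\ref{thm:ben} using $\tau_2(p^a)=0$, and sum with the identity $\sum_{k=1}^{a-1}f(p^k)=p^{a-1}-1$. Your write-up is in fact slightly more detailed than the paper's, since you make explicit the vanishing of $\tau_1(p^a,p^k)$ via Lemma~\ref{lem:tau1} and separately verify the $a=1$ case.
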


\begin{proof}
By Harvey's Theorem, the only admissible signatures for $p^a$ are of the form
\begin{equation*}
(p^k,p^a,p^a)
\end{equation*}
for $1\leq k\leq a$. The $T$-values of these signatures are
\begin{equation*}
T(p^k,p^a,p^a)=\left\lbrace\begin{array}{lr}
 \frac{1}{2}f(p^k) & 1\leq k\leq a-1\\
 \frac{1}{6}(3+2\tau_2(p^a)+f(p^a)) & k=a
\end{array}\right..
\end{equation*}
The fact that $p\equiv 5\modd 6$ implies $\tau_2(p^a)=0$, and so
\begin{equation*}
T(p^k,p^a,p^a)=\left\lbrace\begin{array}{lr}
 \frac{1}{2}f(p^k) & 1\leq k\leq a-1\\
 \frac{1}{6}f(p^a)+\frac{1}{2} & k=a
\end{array}\right..
\end{equation*}
Therefore,
\begin{align*}
QC(p^a)&=\sum_{k=1}^a T(p^k,p^a,p^a)\\
&=\frac{1}{2}\left(\sum_{k=1}^{a-1}f(p^k)\right)+\frac{1}{6}f(p^a)+\frac{1}{2}\\
&=\frac{1}{2}\left(p^{a-1}-1\right)+\frac{1}{6}p^{a-1}(p-2)+\frac{1}{2}\\
&=\frac{1}{6}p^{a-1}(p+1).
\end{align*}
\end{proof}

Then we prove a formula relating $QC(p_1^{a_1}p^a)$ to $QC(p^a)$, where $p\equiv 5\modd 6$ and $p_1$ is any odd prime.

\begin{theorem}
If $p\equiv 5\modd 6$ and $p_1$ is any odd prime, then for positive integers $a_1$ and $a$,
\begin{equation*}
QC(p_1^{a_1}p^a)=QC(p^a)\cdot p_1^{a_1-1}(p_1+1)+1.
\end{equation*}
\end{theorem}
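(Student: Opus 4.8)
The plan is to follow the extension method of Theorem~\ref{prop:base}: realize every admissible signature for $C_{p_1^{a_1}p^a}$ as an extension of an admissible signature for $C_{p^a}$ obtained by inserting a power of $p_1$ into each period, compute the resulting $T$-values via Theorem~\ref{thm:ben} together with Lemmas~\ref{lem:tau1} and~\ref{lem:tau2}, and then sum. The one structural fact that forces the additive constant to equal $1$ rather than a $p_1$-dependent quantity is that $p\equiv 5\modd 6$ gives $\tau_2(p^a)=0$ by Lemma~\ref{lem:tau2}, whence $\tau_2(p_1^{a_1}p^a)=\tau_2(p_1^{a_1})\tau_2(p^a)=0$ by Lemma~\ref{lem:tau2multi}, regardless of the residue class of $p_1$. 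This is the analogue of the phenomenon behind the clean recursion $QC(2p^aq^b)=QC(2p^a)q^{b-1}(q+1)+1$ in Theorem~\ref{prop:base}, where it was Harvey's parity condition that eliminated the three-equal signature; here it is $\tau_2=0$ that neutralizes it.

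First I would recall, as in Theorem~\ref{thm:fivemodsixoneprime}, that the admissible signatures for $C_{p^a}$ are exactly $(p^k,p^a,p^a)$ for $1\le k\le a$, with $T(p^k,p^a,p^a)=\tfrac12 f(p^k)$ for $1\le k\le a-1$ and $T(p^a,p^a,p^a)=\tfrac12+\tfrac16 f(p^a)$. I would then enumerate the two families of extended signatures for $C_{p_1^{a_1}p^a}$: the family with two equal periods, $(p^kp_1^h,\,p^ap_1^{a_1},\,p^ap_1^{a_1})$ with $0\le k\le a$, $0\le h\le a_1$, and $(k,h)\ne(0,0)$; and the family with three distinct periods, $(p^kp_1^{a_1},\,p^ap_1^{h},\,p^ap_1^{a_1})$ with $0\le k\le a-1$ and $0\le h\le a_1-1$. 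Inserting the ranging $p_1$-exponent into the last period produces nothing new, since the last two periods of $(p^k,p^a,p^a)$ coincide; and every listed signature is admissible because $n=p_1^{a_1}p^a$ is odd, so Harvey's third condition is vacuous.

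Next I would compute the $T$-values. For the family with three distinct periods, Theorem~\ref{thm:ben}(1) gives $T=f(p^k)f(p_1^h)$, multiplicative in the two primes. For the family with two equal periods, Theorem~\ref{thm:ben}(2) applies with repeated period $p^ap_1^{a_1}$: the $\phi$-term again yields $f(p^k)f(p_1^h)$, while $\tau_1(p^ap_1^{a_1},\,p^kp_1^h)$ is read off Lemma~\ref{lem:tau1}; since the modulus is odd it vanishes unless $k\in\{0,a\}$ and $h\in\{0,a_1\}$, in which case it equals $1$. The single three-equal signature (the case $k=a$, $h=a_1$) uses Theorem~\ref{thm:ben}(3) with $\tau_2(p_1^{a_1}p^a)=0$, giving $T=\tfrac12+\tfrac16 f(p^a)f(p_1^{a_1})$. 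Then, fixing $k$ and summing over $h$ with $\sum_{h=0}^{a_1}f(p_1^h)=p_1^{a_1-1}(p_1-1)$ and $\sum_{h=0}^{a_1-1}f(p_1^h)=p_1^{a_1-1}$, the $p_1$-factor attached to each $T(p^k,p^a,p^a)$ becomes $p_1^{a_1-1}(p_1-1)+2p_1^{a_1-1}=p_1^{a_1-1}(p_1+1)$, as in Theorem~\ref{prop:base}. For $1\le k\le a-1$ the sum is exactly $T(p^k,p^a,p^a)\,p_1^{a_1-1}(p_1+1)$; for $k=a$ it is $T(p^a,p^a,p^a)\,p_1^{a_1-1}(p_1+1)+1-\tfrac12 p_1^{a_1-1}(p_1+1)$; and the $k=0$ signatures, which descend from no signature of $C_{p^a}$ because $(1,p^a,p^a)$ is inadmissible, contribute exactly $\tfrac12 p_1^{a_1-1}(p_1+1)$.

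The expected main obstacle is precisely this final bookkeeping: tracking where the extra $\tfrac12$'s (from the two-equal-period formula and from the three-equal signature) and the $\tau_1=1$ corners sit, and verifying that the two $p_1$-dependent corrections $-\tfrac12 p_1^{a_1-1}(p_1+1)$ (from $k=a$) and $+\tfrac12 p_1^{a_1-1}(p_1+1)$ (from $k=0$) cancel. Once they do, summing over $k=0,1,\dots,a$ and using $QC(p^a)=\sum_{k=1}^{a}T(p^k,p^a,p^a)$ yields $QC(p_1^{a_1}p^a)=QC(p^a)\,p_1^{a_1-1}(p_1+1)+1$.
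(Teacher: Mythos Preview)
Your proposal is correct and follows essentially the same route as the paper: you enumerate the same two extended signature families (the paper's types 1a and 1b), exploit $\tau_2(p_1^{a_1}p^a)=0$ from $p\equiv 5\pmod 6$ to handle the three-equal-period case, sum over the $p_1$-exponent for each fixed $k$, and obtain the identical three cases $k=0$, $1\le k\le a-1$, $k=a$ with the same cancellation of $\pm\tfrac12 p_1^{a_1-1}(p_1+1)$. The only difference is cosmetic notation (your $h$ is the paper's $k_1$).
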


\begin{proof}
We extend each signature for $C_{p^a}$ to a signature for $C_{p_1^{a_1}p^a}$ by multiplying each period of an admissible signature for $C_{p^a}$ by an appropriate power of $p_1^{k_1}$. The only admissible signatures for $C_{p^a}$ are of the form $(p^k,p^a,p^a)$ for $1\leq k\leq a$. We can extend this signature to two types 1a and 1b for $C_{p_1^{a_1}p^a}$:
\begin{align*}
\text{(Type 1a)} & \quad (p_1^{k_1}p^k,p_1^{a_1}p^a,p_1^{a_1}p^a),\quad 0\leq k_1\leq a_1, 0\leq k\leq a;\\
\text{(Type 1b)} & \quad (p_1^{a_1}p^k,p_1^{k_1}p^a,p_1^{a_1}p^a), \quad 0\leq k_1\leq a_1-1, 0\leq k\leq a-1.
\end{align*}
As in the proofs of the main theorem from before, we let $T$ denote the $T$-value of a signature prior to extension ($T$ depends on the ranging parameter $k$). Recall that this $T$ is in fact
\begin{equation*}
T:=T(p^k,p^a,p^a)=\left\lbrace\begin{array}{lr}
 \frac{1}{2}f(p^k) & 1\leq k\leq a-1\\
 \frac{1}{6}(3+2\tau_2(p^a)+f(p^a)) & k=a
\end{array}\right..
\end{equation*}
Throughout, $\tau_2(p_1^{a_1}p^a)=0$ because $p\equiv 5\modd 6$ (see Lemma \ref{lem:tau2}). 

Computing the $T$-values of the type 1a signatures for $C_{p_1^{a_1}p^a}$,
\begin{align*}
T(p_1^{k_1}p^k,p_1^{a_1}p^a,p_1^{a_1}p^a)&=\left\lbrace\begin{array}{lr}
 \frac{1}{2}f(p_1^{k_1})f(p^k) & \text{either } 1\leq k_1\leq a_1-1\\
 & \text{ or } 1\leq k\leq a-1\\
 &\\
  \frac{1}{2}f(p^a)+\frac{1}{2} & k_1=0, k=a\\
  &\\
  \frac{1}{6}\left(3+2\tau_2(p_1^{a_1}p^a)+f(p_1^{a_1})f(p^a)\right) & k_1=a_1, k=a\\
  &\\
  \frac{1}{2}f(p_1^{a_1})+\frac{1}{2} & k_1=a_1, k=0
\end{array}\right..
\end{align*}
Then comparing to the $T$-values of signatures for $C_{p^a}$,
\begin{equation}\label{3.10.1}
T(p_1^{k_1}p^k,p_1^{a_1}p^a,p_1^{a_1}p^a)=\left\lbrace\begin{array}{lr}
 \frac{1}{2}f(p_1^{k_1}) & 1\leq k_1\leq a_1-1, k=0\\
 &\\
  T\cdot f(p_1^{k_1}) & 0\leq k_1\leq a_1, 1\leq k\leq a-1\\
  &\\
  3\left(T-\frac{1}{2}\right)f(p_1^{k_1}) & 1\leq k_1\leq a_1-1, k=a\\
  &\\
  3\left(T-\frac{1}{2}\right)+\frac{1}{2} & k_1=0, k=a\\
  &\\
  \left(T-\frac{1}{2}\right)f(p_1^{a_1})+\frac{1}{2} & k_1=a_1, k=a \\
  &\\
  \frac{1}{2}f(p_1^{a_1})+\frac{1}{2} & k_1=a_1, k=0
\end{array}\right..
\end{equation}
The cases above having no $T$ occur when $k=0$, i.e., when the signature for $C_{p_1^{a_1}p^a}$ does not arise via extension of some signature of $C_{p^a}$.

Next, computing the $T$-values of the 1b signatures for $C_{p_1^{a_1}p^a}$ and comparing them to the $T$-values of $C_{p^a}$,
\begin{equation}\label{3.10.2}
T(p_1^{a_1}p^k,p_1^{k_1}p^a,p_1^{a_1}p^a)=f(p_1^{k_1})f(p^k)=\left\lbrace\begin{array}{lr}
 f(p_1^{k_1}) & k=0\\
 2Tf(p_1^{k_1}) & 1\leq k\leq a-1
\end{array}\right..
\end{equation}
The case when $k=a$ is not allowed, since otherwise, we would have a type 1a signature instead.

Now, for each fixed value of $k$ with $0\leq k\leq a$, we form the sum of $T$-values over $k_1$ found in \eqref{3.10.1} and \eqref{3.10.2}. The sums are given by
\begin{equation*}
\left\lbrace\begin{array}{lr}
 \frac{1}{2}\left(\sum_{k_1=1}^{a_1-1}f(p_1^{k_1})\right)+\frac{1}{2}f(p_1^{a_1})+\frac{1}{2}+\sum_{k_1=0}^{a_1-1}f(p_1^{k_1}) & k=0\\
 &\\
 T\cdot\left(\sum_{k_1=0}^{a_1}f(p_1^{k_1})\right)+2T\cdot\left(\sum_{k_1=0}^{a_1-1}f(p_1^{k_1})\right) & 1\leq k\leq a-1\\
 &\\
 3\left(T-\frac{1}{2}\right)\left(\sum_{k_1=1}^{a_1-1}f(p_1^{k_1})\right)+3\left(T-\frac{1}{2}\right)+\frac{1}{2}+\left(T-\frac{1}{2}\right)f(p_1^{a_1})+\frac{1}{2} & k=a
\end{array}\right..
\end{equation*}
This simplifies to
\begin{align*}
&\left\lbrace\begin{array}{lr}
 \frac{1}{2}(p_1^{a_1-1}-1+p_1^{a_1-1}(p_1-2))+\frac{1}{2}+p_1^{a_1-1} & k=0\\
 &\\
 T\cdot (p_1^{a_1-1}+p_1^{a_1-1}(p_1-2))+2T\cdot p_1^{a_1-1} & 1\leq k\leq a-1\\
 &\\
 \left(T-\frac{1}{2}\right)(3(p_1^{a_1-1}-1)+3+p_1^{a_1-1}(p_1-2))+1 & k=a
\end{array}\right.\\
&\\
&=\left\lbrace\begin{array}{lr}
 \frac{1}{2}p_1^{a_1-1}(p_1+1) & k=0\\
 &\\
 T\cdot p_1^{a_1-1}(p_1+1) & 1\leq k\leq a-1\\
 &\\
 \left(T-\frac{1}{2}\right)p_1^{a_1-1}(p_1+1)+1 & k=a
\end{array}\right..
\end{align*}
Summing over $k$ gives us a sum over all signatures for $C_{p^a}$, along with the degenerate case when $k=0$. Therefore,
\begin{align*}
QC(p_1^{a_1}p^a)&=\left(\sum_{k=1}^{a}T(p^k,p^a,p^a)\right)\cdot p_1^{a_1-1}(p_1+1)+1\\
&\quad +\frac{1}{2}p_1^{a_1-1}(p_1+1)-\frac{1}{2}p_1^{a_1-1}(p_1+1)\\
&=QC(p^a)\cdot p_1^{a_1-1}(p_1+1)+1.
\end{align*}
\end{proof}

Finally, we prove the general recursive formula needed to derive $QC(n)$ in this case.
\begin{theorem}\label{thm:recursivefivemod6}
For any $r\geq 1, p\equiv 5\modd 6$, $p_i$ any odd prime for $1\leq i\leq r$, and $a_i$ and $a$ positive integers,
\begin{align*}
QC(p_1^{a_1}\cdots p_{r+1}^{a_{r+1}}\cdot p^a)&=\left(QC(p_1^{a_1}\cdots p_r^{a_r}\cdot p^a)+1-2^r\right)\cdot p_{r+1}^{a_{r+1}-1}(p_{r+1}+1)\\
&\quad-1+2^{r+1}.
\end{align*}
\end{theorem}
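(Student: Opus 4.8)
The plan is to follow the proof of Theorem~\ref{thm:onemodsixrecursion} essentially verbatim, exploiting one simplification. Write $n=p_1^{a_1}\cdots p_r^{a_r}p^a$, a product of $r+1$ distinct odd primes one of which, $p$, is $\equiv 5\bmod 6$. By Harvey's Theorem the admissible signatures of $C_n$ split into the two families used there: type~(i), in which all ranging exponents $k_1,\dots,k_r,k_0$ of the $r+1$ primes lie in the first period and the second and third periods agree, and type~(ii), in which the three periods are distinct. I would extend each admissible signature of $C_n$ to an admissible signature of $C_{n\cdot p_{r+1}^{a_{r+1}}}$ by multiplying the periods by powers $p_{r+1}^{t}$ in all ways compatible with Harvey's Theorem --- two extensions (1a, 1b) from each type~(i) signature and three (2a, 2b, 2c) from each type~(ii) signature, with exactly the ranges of Theorem~\ref{thm:onemodsixrecursion} --- compute the extended $T$-values from Theorem~\ref{thm:ben} in terms of the pre-extension $T$-values, and sum.

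The feature that makes this case easier than the case $p_i\equiv 1\bmod 6$ is that $p\equiv 5\bmod 6$ divides the cyclic order of every group that occurs, so Lemmas~\ref{lem:tau2} and~\ref{lem:tau2multi} give $\tau_2=0$ for all the relevant moduli. In particular the ``all ranging exponents maximal'' subcase (case~(c)) loses the $\tau_2$-contributions that produced the $\tfrac13\cdot 2^{r}$ and $\tfrac13\cdot 2^{r+1}$ terms of Theorem~\ref{thm:onemodsixrecursion}: the $T$-value of the all-maximal type~(i) signature for $C_n$ is just $\tfrac12+\tfrac16\prod_i f(p_i^{a_i})f(p^a)$, as in Theorem~\ref{thm:fivemodsixoneprime}, and likewise for its $p_{r+1}$-extension, with only the usual $\tfrac12$'s from equal periods as corrections. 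The other cases for the tuple of ranging exponents are handled as before: (a) when some exponent lies strictly between $0$ and its maximum, the $T$-values are multiplicative and the sum over the new exponent $t$ collapses via $\sum_{t=0}^{m}f(q^{t})=q^{m-1}(q-1)$ and $\sum_{t=0}^{m-1}f(q^{t})=q^{m-1}$ to a clean factor $p_{r+1}^{a_{r+1}-1}(p_{r+1}+1)$ times the old $T$; (b) when all ranging exponents vanish there is no pre-extension type~(i) signature and an extra $\tfrac12$ is carried; and (d) when all ranging exponents lie in $\{0,\text{max}\}$ but are not all equal there are $2^{r+1}-2$ such tuples, each contributing $\bigl(T-\tfrac12\bigr)p_{r+1}^{a_{r+1}-1}(p_{r+1}+1)+1$.

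Summing all these contributions over the tuples of ranging exponents and using $QC(n)=\sum T$ should collapse to
\[
QC(n\cdot p_{r+1}^{a_{r+1}})=\bigl(QC(n)+1-2^{r}\bigr)p_{r+1}^{a_{r+1}-1}(p_{r+1}+1)-1+2^{r+1},
\]
which one checks agrees with the expected closed form $QC(m)=\tfrac16\prod p_i^{a_i-1}(p_i+1)-1+2^{s-1}$ for $m$ a product of $s$ primes one of which is $\equiv 5\bmod 6$; the $r=1$ base case of the induction reduces to the preceding theorem $QC(p_1^{a_1}p^a)=QC(p^a)p_1^{a_1-1}(p_1+1)+1$ together with the two-prime instance of the computation above. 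I expect the main difficulty to be purely bookkeeping: keeping the factors of $\tfrac12$ straight at the boundary values $t=0$ and $t=a_{r+1}$, where two periods of an extended signature coincide, and correctly counting the $2^{r+1}-2$ corner tuples --- the exponent is $r+1$, not $r$, since $n$ already has $r+1$ prime factors. It is also worth checking that substituting $p\equiv 5\bmod 6$ for one of the $p_i\equiv 1\bmod 6$ does not affect the multiplicativity of the $f$-factors, which it does not, since $f(p^{k})$ depends only on the prime and the exponent.
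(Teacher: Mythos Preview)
Your proposal is correct and follows essentially the same approach as the paper: extend each admissible signature of $C_{np^a}$ (with $n=p_1^{a_1}\cdots p_r^{a_r}$) to signatures of $C_{np_{r+1}^{a_{r+1}}p^a}$ via types 1a, 1b, 2a, 2b, 2c, compute the extended $T$-values from Theorem~\ref{thm:ben} using the simplification $\tau_2(\cdot\,p^a)=0$, and sum over the new exponent and then over all tuples $(k_1,\ldots,k_r,k)$. Your observation that the corner count is $2^{r+1}-2$ (since $np^a$ has $r+1$ prime factors) is exactly the bookkeeping point that makes the constants come out to $1-2^r$ and $-1+2^{r+1}$, matching the paper's computation.
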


\begin{proof}
We follow the same method and notation as in the proof of $QC(3n)$. Let $n=\prod_{i=1}^rp_i^{a_i}$. For the two types of signatures of $C_{np^a}$,
\begin{enumerate}
\item $(p_1^{k_1}\cdots p_r^{k_r}p^k,p_1^{a_1}\cdots p_r^{a_r}p^a,p_1^{a_1}\cdots p_r^{a_r}p^a),$
\item $(k_{r_1}\cdots k_{r_{m_1}},k_{s_1}\cdots k_{s_{m_2}},k_{t_1}\cdots k_{t_{m_3}}),$
\end{enumerate}
we extend to signatures of types 1a, 1b, 2a, 2b, and 2c of $C_{np_{r+1}^{a_{r+1}}p^a}$ as before:
\begin{align*}
&(\text{1a})\quad (p_1^{k_1}\cdots p_{r+1}^{k_{r+1}}p^k,p_1^{a_1}\cdots p_{r+1}^{a_{r+1}}p^a,p_1^{a_1}\cdots p_{r+1}^{a_{r+1}}p^a),\\&\quad\quad 0\leq k_i\leq a_i, 0\leq k\leq a,\\
& \quad\quad k_i\neq 0 \text{ and } k\neq 0 \text{ simultaneously for all } 1\leq i\leq r;\\
&(\text{1b})\quad (p_1^{k_1}\cdots p_r^{k_r}p_{r+1}^{a_{r+1}}p^k,p_1^{a_1}\cdots p_r^{k_r} p_{r+1}^{k_{r+1}}p^a,p_1^{a_1}\cdots p_{r+1}^{a_{r+1}}p^a),\\&\quad\quad 0\leq k_i\leq a_i,\\
&\quad\quad 0\leq k_{r+1}\leq a_{r+1}-1,\\
&\quad\quad 0\leq k\leq a-1,\\
&\quad\quad k_i\neq a_i \text{ and } k\neq a \text{ simultaneously for all } 1\leq i\leq r;\\
&(\text{2a})\quad (k_{r_1}\cdots k_{r_{m_1}}k_{r+1},k_{s_1}\cdots k_{s_{m_2}},k_{t_1}\cdots k_{t_{m_3}}),\\&\quad\quad 0\leq k_{r+1}\leq a_{r+1};\\
&(\text{2b})\quad (k_{r_1}\cdots k_{r_{m_1}},k_{s_1}\cdots k_{s_{m_2}}k_{r+1},k_{t_1}\cdots k_{t_{m_3}}),\\&\quad\quad 0\leq k_{r+1}\leq a_{r+1}-1;\\
&(\text{2c})\quad (k_{r_1}\cdots k_{r_{m_1}},k_{s_1}\cdots k_{s_{m_2}},k_{t_1}\cdots k_{t_{m_3}}k_{r+1}),\\&\quad\quad 0\leq k_{r+1}\leq a_{r+1}-1.
\end{align*}

Because $p\equiv 5\modd 6$, then $\tau_2(x\cdot p^a)=0$ for any integer $x$ (see Lemma \ref{lem:tau2}). Now, the $T$-value for a type (i) signature for $C_{np^a}$ is
\begin{equation*}
\left\lbrace\begin{array}{lr}
 \frac{1}{2}\prod_{i=1}^r f(p_i^{k_i})\cdot f(p^k) & \text{ either } 1\leq k_i\leq a_i-1 \text{ for some } i\\
 & \text{or } 1\leq k\leq a-1\\
 &\\
 \frac{1}{2}\prod_{i=1}^r f(p_i^{k_i})+\frac{1}{2} & k=0, k_i=a_i \text{ for all }i\\
 &\\
 \frac{1}{2}f(p^a)+\frac{1}{2} & k=a, k_i=0 \text{ for all }i\\
 &\\
 \frac{1}{6}\left(3+2\cdot \tau_2(n\cdot p_{r+1}^{a_{r+1}}\cdot p^a)+\prod_{i=1}^r f(p_i^{a_i})f(p^a)\right) & k=a, k_i=a_i\text{ for all }i\\
 &\\
 \frac{1}{2}\prod_{i=1}^mf(p_{x_i}^{k_{x_i}})+\frac{1}{2} & k_{x_i}=a_i, k=0\\
 &\\
 \frac{1}{2}\prod_{i=1}^mf(p_{x_i}^{k_{x_i}})f(p^a)+\frac{1}{2} & k_{x_i}=a_i, k=a
\end{array}\right..
\end{equation*}
The $T$-value of the type 1a signature for $C_{np_{r+1}^{a_{r+1}}p^a}$ is identical:
\begin{equation*}
\left\lbrace\begin{array}{lr}
 \frac{1}{2}\prod_{i=1}^{r+1} f(p_i^{k_i})\cdot f(p^k) & \text{ either } 1\leq k_i\leq a_i-1 \text{ for some } i\\
 & \text{or } 1\leq k\leq a-1\\
 &\\
 \frac{1}{2}\prod_{i=1}^{r+1} f(p_i^{k_i})+\frac{1}{2} & k=0, k_i=a_i \text{ for all }i\\
 &\\
 \frac{1}{2}f(p^a)+\frac{1}{2} & k=a, k_i=0 \text{ for all }i\\
 &\\
 \frac{1}{6}\left(3+2\cdot \tau_2(n\cdot p_{r+1}^{a_{r+1}}\cdot p^a)+\prod_{i=1}^{r+1} f(p_i^{a_i})f(p^a)\right) & k=a, k_i=a_i\text{ for all }i\\
 &\\
 \frac{1}{2}\prod_{i=1}^mf(p_{x_i}^{k_{x_i}})+\frac{1}{2} & k_{x_i}=a_i, k=0\\
 &\\
 \frac{1}{2}\prod_{i=1}^mf(p_{x_i}^{k_{x_i}})f(p^a)+\frac{1}{2} & k_{x_i}=a_i, k=a
\end{array}\right..
\end{equation*}
Now we need to compare the $T$-values of the type 1a signatures to the type (i) signatures. The $T$-values of the type 1a signatures, for various ranges of the parameters $k_1,\ldots,k_r$ and $k$, are given in Table \ref{fig:table1a} in terms of $T$, the $T$-value of the original signature prior to extending. Those values without a $T$ indicate a degenerate case, i.e., a $T$-value for a signature which did not arise via extension. The left column of Table \ref{fig:table1a} is $T(p_1^{k_1}\cdots p_{r+1}^{k_{r+1}}p^k,p_1^{a_1}\cdots p_{r+1}^{a_{r+1}}p^a,p_1^{a_1}\cdots p_{r+1}^{a_{r+1}}p^a)$, and the right column gives the ranges for which the $T$-value given in the left column is valid.
\begin{center}
\begin{figure}
\begin{tabular}{|l|}\hline
$T$-values for Type 1a Signatures for $C_{np_{r+1}^{a_{r+1}}p^a}$\\\hline
\end{tabular}
\begin{tabular}{|l|l|} \hline
$T$-value & Ranges \\ \hline \hline
 $T\cdot f(p_{r+1}^{k_{r+1}})$ & $1\leq k_{r+1}\leq a_{r+1}-1$ \\
 & and either $1\leq k_i\leq a_i-1$ for some $1\leq i\leq r$\\
 & or $1\leq k\leq a-1$ \\ \hline
 $\left(T-\frac{1}{2}\right)\cdot f(p_{r+1}^{k_{r+1}})$ & $1\leq k_{r+1}\leq a_{r+1}-1$\\
 & $k=0, k_i=a_i$ for all $1\leq i\leq r$ \\ \hline
 $\frac{1}{2}f(p_{r+1}^{k_{r+1}})$ & $1\leq k_{r+1}\leq a_{r+1}-1$\\
 & $k=0, k_i=0$ for all $1\leq i\leq r$ \\ \hline
 $\left(T-\frac{1}{2}\right)f(p_{r+1}^{k_{r+1}})$ & $1\leq k_{r+1}\leq a_{r+1}-1$\\
 & $k=a, k_i=0$ for all $1\leq i\leq r$ \\ \hline
 $T\cdot f(p_{r+1}^{k_{r+1}})$ & $k_{r+1}=0$ or $k_{r+1}=a_{r+1}$ \\ 
 & and either $1\leq k_i\leq a_i-1$ for some $1\leq i\leq r$\\
 & or $1\leq k\leq a-1$ \\ \hline
 $T$ & $k_{r+1}=0$\\
 & $k=0, k_i=a_i$ for all $1\leq i\leq r$\\ \hline
 $T$ & $k_{r+1}=0$\\
 & $k=0, k_{x_i}=a_i$\\ \hline
 $T$ & $k_{r+1}=0$\\
 & $k=a, k_i=0$ for all $1\leq i\leq r$\\ \hline
 $T$ & $k_{r+1}=0$\\
 & $k=a, k_{x_i}=a_i$\\ \hline
 $3\left(T-\frac{1}{2}\right)+\frac{1}{2}$ & $k_{r+1}=0$\\
 & $k=a, k_i=a_i$ for all $1\leq i\leq r$\\ \hline
 $\frac{1}{2}f(p_{r+1}^{a_{r+1}})+\frac{1}{2}$ & $k_{r+1}=a_{r+1}$\\
 & $k=0, k_i=0$ for all $1\leq i\leq r$\\ \hline
 $\left(T-\frac{1}{2}\right)f(p_{r+1}^{a_{r+1}})+\frac{1}{2}$ & $k_{r+1}=a_{r+1}$\\
 & $k=0, k_{x_i}=a_i$\\ \hline
 $\left(T-\frac{1}{2}\right)f(p_{r+1}^{a_{r+1}})+\frac{1}{2}$ & $k_{r+1}=a_{r+1}$\\
 & $k=0, k_i=a_i$ for all $1\leq i\leq r$\\ \hline
 $\left(T-\frac{1}{2}\right)f(p_{r+1}^{a_{r+1}})+\frac{1}{2}$ & $k_{r+1}=a_{r+1}$\\
 & $k=a, k_{x_i}=a_i$\\ \hline
 $\left(T-\frac{1}{2}\right)f(p_{r+1}^{a_{r+1}})+\frac{1}{2}$ & $k_{r+1}=a_{r+1}$\\
 & $k=a, k_i=0$ for all $1\leq i\leq r$\\ \hline
 $\left(T-\frac{1}{2}\right)f(p_{r+1}^{a_{r+1}})+\frac{1}{2}$ & $k_{r+1}=a_{r+1}$\\
 & $k=a, k_i=a_i$ for all $1\leq i\leq r$\\ \hline
\end{tabular}
\caption{The $T$-values for type 1a signatures for $C_{np_{r+1}^{a_{r+1}}p^a}$ with their respective ranges.}
\label{fig:table1a}
\end{figure}
\end{center}

Next, we compute the $T$-value of a type 1b signature. The type 1b signature are of the form $(p_1^{k_1}\cdots p_r^{k_r}p_{r+1}^{a_{r+1}}p^k,p_1^{a_1}\cdots p_r^{k_r} p_{r+1}^{k_{r+1}}p^a,p_1^{a_1}\cdots p_{r+1}^{a_{r+1}}p^a)$. The $T$-value of this type of signature is simply
\begin{equation*}
\left(\prod_{i=1}^r f(p_i^{k_i})\right)f(p_{r+1}^{k_{r+1}})f(p^k).
\end{equation*}
Now compare these values to their original $T$-value, which we denote by $T$, for various ranges of $k_i$ and $k$, for $1\leq i\leq r$. This is summarized in Table \ref{fig:table1b}. 

Then, for fixed values of $k_1,\ldots, k_r$ and $k$, we compute the total sum of the $T$-values for type 1a and 1b signatures over $k_{r+1}$. This is summarized in Table \ref{fig:table1a1b}. Significant simplifications yield the subsequent Table \ref{fig:table1a1bsimp}.
\begin{center}
\begin{figure}
\begin{tabular}{|l|}\hline
$T$-values for Type 1b Signatures of $C_{np_{r+1}^{a_{r+1}}p^a}$\\\hline
\end{tabular}
\begin{tabular}{|l|l|}\hline
$T$-values & Ranges \\ \hline
$2T\cdot f(p_{r+1}^{k_{r+1}})$ & $0\leq k_{r+1}\leq a_{r+1}-1$ \\
& and either $1\leq k_i\leq a_i-1$ for some $1\leq i\leq r$\\
 & or $1\leq k\leq a-1$ \\ \hline
 $2\left(T-\frac{1}{2}\right)f(p_{r+1}^{k_{r+1}})$ & $k=0, k_i=a_i$ for all $1\leq i\leq r$\\
 & $0\leq k_{r+1}\leq a_{r+1}-1$\\ \hline
 $f(p_{r+1}^{k_{r+1}})$ & $k=0, k_i=0$ for all $1\leq i\leq r$ \\
 & $1\leq k_{r+1}\leq a_{r+1}-1$\\ \hline
 $2\left(T-\frac{1}{2}\right)\cdot f(p_{r+1}^{k_{r+1}})$ & $k=0, k_i=0$ for all $1\leq i\leq r$\\
 & $0\leq k_{r+1}\leq a_{r+1}-1$\\ \hline
 $2\left(T-\frac{1}{2}\right)\cdot f(p_{r+1}^{k_{r+1}})$ & $k=0, k_{x_i}=a_i$\\
 & $0\leq k_{r+1}\leq a_{r+1}-1$\\ \hline
 $2\left(T-\frac{1}{2}\right)\cdot f(p_{r+1}^{k_{r+1}})$ & $k=a, k_{x_i}=a_i$\\
 & $0\leq k_{r+1}\leq a_{r+1}-1$\\ \hline
 1 & $k_{r+1}=0, k_i=0$ for all $1\leq i\leq r$, and $k=0$\\ \hline
\end{tabular}
\caption{The $T$-values for type 1b signatures for $C_{np_{r+1}^{a_{r+1}}p^a}$ with their respective ranges.}
\label{fig:table1b}
\end{figure}
\end{center}

\begin{center}
\begin{figure}
\begin{tabular}{|l|}\hline
Sum of 1a and 1b Signatures over $k_{r+1}$ \\ \hline
\end{tabular}
\begin{tabular}{|l|l|}\hline
$T$-values & Ranges for $k_1,\ldots,k_r$ and $k$ \\ \hline
$\displaystyle T\cdot \left(\sum_{k_{r+1}=1}^{a_{r+1}-1}f(p_{r+1}^{k_{r+1}})\right)$ & either $1\leq k_i\leq a_i-1$ for some $1\leq i\leq r$\\
$\displaystyle+2T\cdot\left(\sum_{k_{r+1}=0}^{a_{r+1}-1}f(p_{r+1}^{k_{r+1}})\right)$ & or $1\leq k\leq a-1$\\ 
 $+T+T\cdot f(p_{r+1}^{a_{r+1}})$ & \\ \hline
 $\displaystyle T+\left(T-\frac{1}{2}\right)\cdot\left(\sum_{k_{r+1}=1}^{a_{r+1}-1}f(p_{r+1}^{k_{r+1}})\right)$ & $k=0$ and $k_i=a_i$ for all $1\leq i\leq r$\\
 $+\left(T-\frac{1}{2}\right)f(p_{r+1}^{a_{r+1}})+\frac{1}{2}$ & \\
 $\displaystyle+2\left(T-\frac{1}{2}\right)\cdot \left(\sum_{k_{r+1}=0}^{a_{r+1}-1}f(p_{r+1}^{k_{r+1}})\right)$ & \\ \hline
 $\displaystyle\frac{1}{2}\cdot \left(\sum_{k_{r+1}=1}^{a_{r+1}-1}f(p_{r+1}^{k_{r+1}})\right)+\frac{1}{2}f(p_{r+1}^{a_{r+1}})$ & $k=0$ and $k_i=0$ for all $1\leq i\leq r$ \\ 
 $\displaystyle+\frac{1}{2}\left(\sum_{k_{r+1}=1}^{a_{r+1}-1}f(p_{r+1}^{k_{r+1}})\right)+1$ & \\ \hline
 $\displaystyle\left(T-\frac{1}{2}\right)\cdot \left(\sum_{k_{r+1}=1}^{a_{r+1}-1}f(p_{r+1}^{k_{r+1}})\right)$ & $k=a$ and $k_i=0$ for all $1\leq i\leq r$ \\
 $+T+\left(T-\frac{1}{2}\right)f(p_{r+1}^{a_{r+1}})$ & \\
 $\displaystyle+2\left(T-\frac{1}{2}\right)\cdot \left(\sum_{k_{r+1}=1}^{a_{r+1}-1}f(p_{r+1}^{k_{r+1}})\right)$ & \\ \hline
 $\displaystyle 3\left(T-\frac{1}{2}\right)\cdot \left(\sum_{k_{r+1}=1}^{a_{r+1}-1}f(p_{r+1}^{k_{r+1}})\right)$ & $k=a$ and $k_i=a_i$ for all $1\leq i\leq r$ \\
 $+3\left(T-\frac{1}{2}\right)+\frac{1}{2}$ & \\
 $+\left(T-\frac{1}{2}\right)f(p_{r+1}^{a_{r+1}})+\frac{1}{2}$ & \\ \hline
 $\displaystyle\left(T-\frac{1}{2}\right)\cdot \left(\sum_{k_{r+1}=1}^{a_{r+1}-1}f(p_{r+1}^{k_{r+1}})\right)$ & $k=0$ and $k_{x_i}=a_i$ \\
 $+T+\left(T-\frac{1}{2}\right)f(p_{r+1}^{a_{r+1}})+\frac{1}{2}$ & \\
 $\displaystyle+2\left(T-\frac{1}{2}\right)\cdot \left(\sum_{k_{r+1}=1}^{a_{r+1}-1}f(p_{r+1}^{k_{r+1}})\right)$ & \\ \hline
 $\displaystyle\left(T-\frac{1}{2}\right)\cdot \left(\sum_{k_{r+1}=1}^{a_{r+1}-1}f(p_{r+1}^{k_{r+1}})\right)$ & $k=a$ and $k_{x_i}=a_i$ \\
 $+T+\left(T-\frac{1}{2}\right)f(p_{r+1}^{a_{r+1}})+\frac{1}{2}$ & \\
 $\displaystyle+2\left(T-\frac{1}{2}\right)\cdot \left(\sum_{k_{r+1}=1}^{a_{r+1}-1}f(p_{r+1}^{k_{r+1}})\right)$ & \\ \hline
\end{tabular}
\caption{The $T$-values of type 1a and 1b signatures added together, with the sum running over $k_{r+1}$.}
\label{fig:table1a1b}
\end{figure}
\end{center}

\begin{center}
\begin{figure}
\begin{tabular}{|l|}\hline
Sum of 1a and 1b Signatures over $k_{r+1}$ \\ \hline
\end{tabular}
\begin{tabular}{|l|l|}\hline
$T$-values & Ranges for $k_1,\ldots,k_r$ and $k$ \\
 \hline
$T\cdot p_{r+1}^{a_{r+1}-1}(p_{r+1}+1)$ & either $1\leq k_i\leq a_i-1$ for some $1\leq i\leq r$ \\
& \\ \hline
 $\left(T-\frac{1}{2}\right)\cdot p_{r+1}^{a_{r+1}-1}(p_{r+1}+1)+1$ & $k=0$ and $k_i=a_i$ for all $1\leq i\leq r$\\
  & \\ \hline
 $\frac{1}{2}p_{r+1}^{a_{r+1}-1}(p_{r+1}+1)$ & $k=0$ and $k_i=0$ for all $1\leq i\leq r$ \\
  & \\ \hline
 $\left(T-\frac{1}{2}\right)\cdot p_{r+1}^{a_{r+1}-1}(p_{r+1}+1)+1$ & $k=a$ and $k_i=0$ for all $1\leq i\leq r$ \\
 & \\ \hline
 $\left(T-\frac{1}{2}\right)\cdot p_{r+1}^{a_{r+1}-1}(p_{r+1}+1)+1$ & $k=a$ and $k_i=a_i$ for all $1\leq i\leq r$ \\
  & \\ \hline
 $\left(T-\frac{1}{2}\right)\cdot p_{r+1}^{a_{r+1}-1}(p_{r+1}+1)+1$ & $k=0$ and $k_{x_i}=a_i$ \\
  & \\ \hline
 $\left(T-\frac{1}{2}\right)\cdot p_{r+1}^{a_{r+1}-1}(p_{r+1}+1)+1$ & $k=a$ and $k_{x_i}=a_i$ \\
 & \\ \hline
\end{tabular}
\caption{Simplified $T$-values taken from Table \ref{fig:table1a1b}.}
\label{fig:table1a1bsimp}
\end{figure}
\end{center}

Adding up all the values in Table \ref{fig:table1a1bsimp} gives the total sum over all $T$-values of types 1a and 1b signatures in terms of $T$, which are the $T$-values of type (i) signatures. This sum is
\begin{align}
&\left(\sum_{\text{type }(i)}T\right)\cdot p_{r+1}^{a_{r+1}-1}(p_{r+1}+1)-\frac{1}{2}p_{r+1}^{a_{r+1}-1}(p_{r+1}+1)+\frac{1}{2}p_{r+1}^{a_{r+1}-1}(p_{r+1}+1)\nonumber\\
&\quad -\frac{1}{2}p_{r+1}^{a_{r+1}-1}(p_{r+1}+1)+1-\frac{1}{2}p_{r+1}^{a_{r+1}-1}(p_{r+1}+1)+1\nonumber\\
&=\left(\left(\sum_{\text{type }(i)}T\right)+1-2^r\right)p_{r+1}^{a_{r+1}-1}(p_{r+1}+1)-1+2^{r+1}.\label{sum1a1b}
\end{align}
This completes the sums over the $T$-values of types 1a and 1b signatures, written in terms of type (i) $T$-values.

Next, consider the type (ii) signatures of the form
\begin{equation*}
(k_{r_1}\cdots k_{r_{m_1}},k_{s_1}\cdots k_{s_{m_2}},k_{t_1}\cdots k_{t_{m_3}}).
\end{equation*}
Recall that this type refers to signatures whose ranging parameters of exponents on primes comes from a partition of $k_1,\ldots,k_{r},k$ into at two or three parts. The $T$-value of the above signature is simply $T:=\prod_{i=1}^r f(p_i^{k_i})\cdot f(p^k)$. We calculate the $T$-values of the extended signatures 2a, 2b, and 2c in terms of the $T$-value for signatures of type (ii), which we denote by $T$.

The $T$-value of a type 2a signature for $C_{np_{r+1}^{a_{r+1}}p^a}$, which is of the form
\begin{equation*}
(k_{r_1}\cdots k_{r_{m_1}}k_{r+1},k_{s_1}\cdots k_{s_{m_2}},k_{t_1}\cdots k_{t_{m_3}})
\end{equation*}
for $0\leq k_{r+1}\leq a_{r+1}$, has $T$-value given by
\begin{equation*}
\prod_{i=1}^r f(p_i^{k_i})f(p^k)\cdot f(p_{r+1}^{k_{r+1}})=T\cdot f(p_{r+1}^{k_{r+1}}). 
\end{equation*}
The $T$-value of type 2b and 2c signatures for $C_{np_{r+1}^{a_{r+1}}p^a}$, which are of the forms
\begin{equation*}
(k_{r_1}\cdots k_{r_{m_1}},k_{s_1}\cdots k_{s_{m_2}}k_{r+1},k_{t_1}\cdots k_{t_{m_3}})
\end{equation*}
and
\begin{equation*}
(k_{r_1}\cdots k_{r_{m_1}},k_{s_1}\cdots k_{s_{m_2}},k_{t_1}\cdots k_{t_{m_3}}k_{r+1}),
\end{equation*}
respectively, for $0\leq k_{r+1}\leq a_{r+1}-1$, both have the same $T$-value given by
\begin{equation*}
\prod_{i=1}^r f(p_i^{k_i})f(p^k)\cdot f(p_{r+1}^{k_{r+1}})=T\cdot f(p_{r+1}^{k_{r+1}}). 
\end{equation*}
Thus, the total sum over $k_{r+1}$ of the type 2a, 2b, and 2c signatures for $C_{np_{r+1}^{a_{r+1}}p^a}$ in terms of $T$ is given by
\begin{align*}
&T\cdot\left(\left(\sum_{k_{r+1}=0}^{a_{r+1}}f(p_{r+1}^{k_{r+1}})\right)+2\cdot \left(\sum_{k_{r+1}=0}^{a_{r+1}-1}f(p_{r+1}^{k_{r+1}})\right)\right)\nonumber\\
&=T\cdot\left(p_{r+1}^{a_{r+1}-1}+p_{r+1}^{a_{r+1}-1}(p_{r+1}-2)+2p_{r+1}^{a_{r+1}-1}\right)\nonumber\\
&=T\cdot p_{r+1}^{a_{r+1}-1}(p_{r+1}+1).
\end{align*}
Taking the sum over all $k_1\ldots,k_r, k$ yields all the type (ii) $T$-values as well:
\begin{equation}
\left(\sum_{\text{type }(ii)}T\right)\cdot p_{r+1}^{a_{r+1}-1}(p_{r+1}+1).\label{sum2a2b2c}
\end{equation}

Finally, combining \eqref{sum1a1b} and \eqref{sum2a2b2c},
\begin{align*}
&QC(p_1^{a_1}\cdots p_{r+1}^{a_{r+1}}\cdot p^a)\\
&=\left(\left(\sum_{\text{type }(i)}T\right)+\left(\sum_{\text{type }(ii)}T\right)+1-2^r\right)p_{r+1}^{a_{r+1}-1}(p_{r+1}+1)-1+2^{r+1}\\
&=\left(QC(p_1^{a_1}\cdots p_r^{a_r}\cdot p^a)+1-2^r\right)\cdot p_{r+1}^{a_{r+1}}(p_{r+1}+1)-1+2^{r+1}.
\end{align*}
\end{proof}

We can now conclude the proof of Theorem \ref{thm:qcodd} by using the above theorems.

\begin{proof}[Proof of Second Subcase of Theorem \ref{thm:qcodd}]
We aim to show
\begin{equation*}
QC(p_1^{a_1}\cdots p_r^{a_r})=\frac{1}{6}\prod_{i=1}^r p_i^{a_i-1}(p_i+1)-1+2^{r-1}
\end{equation*}
when some $p_i\equiv 5\modd 6$ and all primes $p_i$ are odd. We proceed by induction on the number of primes $r$. This has been shown true for $r=1$ above in Theorem \ref{thm:fivemodsixoneprime}.

Now assume that the formula holds for some $r\geq 1$. We show the formula holds for $p_1^{a_1}\cdots p_{r+1}^{a_{r+1}}$, where some $p_i\equiv 5\modd 6$ for $1\leq i\leq r+1$ and all primes $p_i$ are odd. Since there exists some $p_i\equiv 5\modd 6$ for $1\leq i\leq r+1$, take one such $p_i^{a_i}$ and rename it $p^a$. Relabel all other primes to be written in increasing order in their indices. Then by Theorem \ref{thm:recursivefivemod6} and using the induction hypothesis,
\begin{align*}
&QC(p_1^{a_1}\cdots p_r^{a_r}p^a)=\left(QC(p_1^{a_1}\cdots p_{r-1}^{a_{r-1}}\cdot p^a)+1-2^{r-1}\right)p_r^{a_r-1}(p_r+1)-1+2^r\\
&=\left(\frac{1}{6}\prod_{i=1}^{r-1}p_i^{a_i-1}(p_i+1)\cdot p^{a-1}(p+1)-1+2^{r-1}+1-2^{r-1}\right)\cdot p_r^{a_r-1}(p_r+1)\\
&\quad -1+2^r\\
&=\frac{1}{6}\prod_{i=1}^{r}p_i^{a_i-1}(p_i+1)\cdot p^{a-1}(p+1)-1+2^r.
\end{align*}
Relabel the indices of the primes as they were originally, yielding
\begin{equation*}
QC(p_1^{a_1}\cdots p_{r+1}^{a_{r+1}})=\frac{1}{6}\prod_{i=1}^{r+1}p_i^{a_i-1}(p_i+1)-1+2^r.
\end{equation*}
Therefore, the formula holds for $p_1^{a_1}\cdots p_{r+1}^{a_{r+1}}$. We conclude
\begin{equation*}
QC(n)=\frac{1}{6}\prod_{i=1}^{r}p_i^{a_i-1}(p_i+1)\cdot p^{a-1}(p+1)-1+2^{r-1}
\end{equation*}
when $n=\prod_{i=1}^r p_i^{a_i}$ with each $p_i$ odd and some $p_i\equiv 5\modd 6$.

\end{proof}


\section{Examples and Further Exploration}\label{sec:conseq}



\begin{example}
We describe some interesting results on compact Riemann surfaces that admit an order $p$ autormophism, called a \emph{$p$-gonal surface}. Let $p\geq 5$ be a prime. A result by G. Gonz\'alez-Diez \cite{gonz95} proves the following: given any two order $p$ automorphisms, $\tau_1$ and $\tau_2$, on a compact Riemann surface $X$, the groups generated by $\tau_1$ and $\tau_2$ are conjugate in $\aut(X)$. That is, $\langle\tau_1\rangle$ and $\langle\tau_2\rangle$ are topologically equivalent. A related result due to Riera and Rodr{\'i}guez (Theorem 1, \cite{riera}) demonstrates a one-to-one correspondence between equivalence classes of $(p,p,p)$-generating vectors of $\Z/p\Z$ (the integers modulo $p$) and the set of conformal classes of Lefschetz curves having affine equation $y^p=x^u(x-1)$ for $u\in\lbrace 1,\ldots,p-2\rbrace$. This means that the number $QC(p)$ of topologically distinct actions of $C_p$ on quasiplatonic surfaces also distinguishes these surfaces up to conformal equivalence. In other words, for $p\geq 5$, the formula
\begin{equation*}
QC(p)=\left\lbrace\begin{array}{lr}
 \frac{1}{6}(p+1) & p\equiv 5\modd 6\\
 & \\
 \frac{1}{6}(p+1)+\frac{2}{3} & p\equiv 1\modd 6.
\end{array}\right.
\end{equation*}
enumerates the quasiplatonic cyclic $p$-gonal surfaces (see Corollary \ref{cor:primepower} in this paper). For quasiplatonic surfaces, we need $p\geq 5$, since $p=2$ or $p=3$ correspond to genus zero and genus one surfaces, respectively. The Lefschetz curves contain the regular dessins having a $C_p$ group of automorphisms(e.g., Example 4.1, \cite{jones1}).
\end{example}

\begin{example}
Consider quasiplatonic topological $C_7$-actions on surfaces. As discussed in Examples \ref{ex:twoactions} and \ref{ex:twoactionverified}, there are only two such actions on surfaces. Harvey's Theorem gives only one admissible signature $(7,7,7)$ of such surfaces, implying that these surfaces have genus given by the Riemann-Hurwitz formula as
\begin{equation*}
1+\frac{7}{2}\left(1-\frac{1}{7}-\frac{1}{7}-\frac{1}{7}\right)=3.
\end{equation*} 
On the other hand, there are $r(C_7)=8$ regular dessins with a $C_7$ automorphism group. Since three of these dessins lie on the Riemann sphere, there are only five regular dessins on genus three surfaces having a $C_7$ group of automorphisms. Moreover, there are only two quasiplatonic surfaces of genus three admitting a $C_7$ group of automorphisms (see e.g., Table 5.3 in \cite{jones} for the classification of quasiplatonic surfaces up to genus four). Refer to Figure \ref{fig:dessins} for these regular dessins. The arrows indicate the edges to identified of the fundamental domain of the Fuchsian group uniformizing the surface with the embedded dessin. Such a Fuchsian group is a normal subgroup of $\Delta(7,7,7)$ of index seven.
\begin{center}
\begin{figure}[ht]
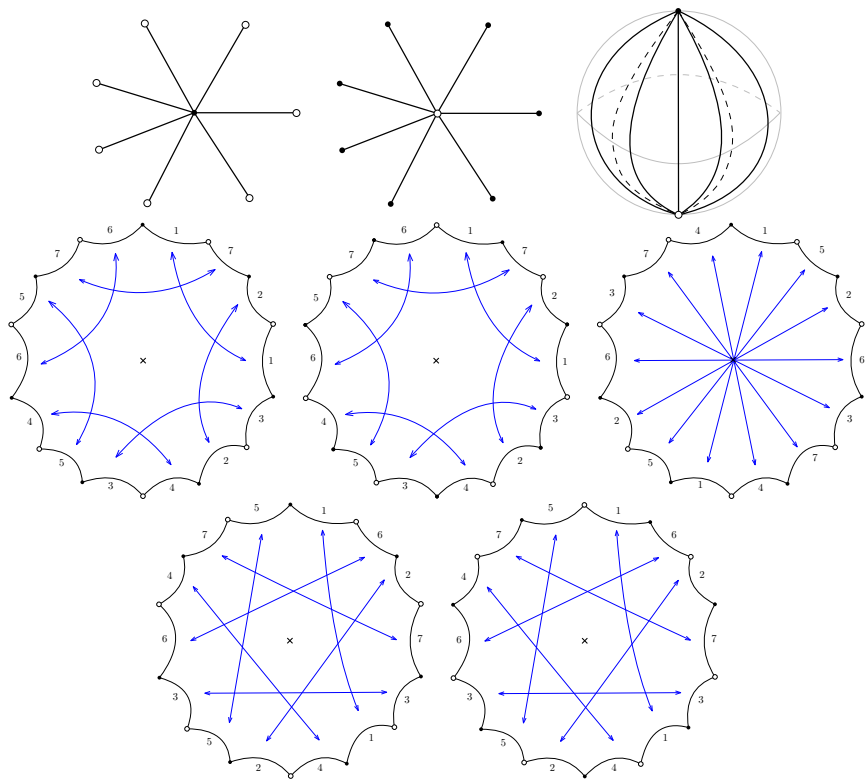

\includegraphics[page=12,scale=.6]{latinx.pdf}

\includegraphics[page=14,scale=.4]{latinx.pdf}\quad \includegraphics[page=15,scale=.4]{latinx.pdf}\quad \includegraphics[page=16,scale=.4]{latinx.pdf}

\includegraphics[page=17,scale=.4]{latinx.pdf}\quad \includegraphics[page=18,scale=.4]{latinx.pdf}

\caption{The eight regular dessins with $C_7$ group of automorphisms. The three on the first row lie on $\widehat{\C}$, the three on the second row lie on the affine curve $y^2=x^8-x$, and the two on the third row lie on the Klein quartic curve $x^3y+y^3z+z^3x=0$ given in projective coordinates $[x:y:z]\in\mathbb{P}^2(\C)$.}\label{fig:dessins}

\end{figure}
\end{center}
To summarize:
\begin{itemize}
\item $QC(7)=2$ means that the two nonequivalent quasiplatonic topological $C_7$-actions given by the generating vectors $(3,3,1)$ and $(4,2,1)$ are the only ones.
\item $r(C_7)=8$ means there are eight regular dessins with a $C_7$ symmetry group. Five of these dessins correspond to the generating vectors
\begin{equation*}
(3,3,1), \ (3,1,3), \ (1,3,3), \ (4,2,1), \ (2,4,1).
\end{equation*}
Those five dessins lie on two distinct quasiplatonic surfaces, both of type $(7,7,7)$ and of genus three.
\end{itemize}

\end{example}

\begin{example}
Consider quasiplatonic topological $C_8$-actions on surfaces. Harvey's Theorem gives two admissible signatures for such $C_8$ actions: $(2,8,8)$ and $(4,8,8)$. By the Riemann-Hurwitz formula, the genus of the surface of type $(2,8,8)$ is
\begin{equation*}
1+\frac{8}{2}\left(1-\frac{1}{2}-\frac{1}{8}-\frac{1}{8}\right)=2.
\end{equation*}
The affine equation of this surface as a complex algebraic curve is given by $y^2=x^5-x$ (see Table 5.3 in \cite{jones}). The genus of the surfaces of type $(4,8,8)$ is
\begin{equation*}
1+\frac{8}{2}\left(1-\frac{1}{4}-\frac{1}{8}-\frac{1}{8}\right)=3.
\end{equation*}
It turns out that these surfaces of type $(4,8,8)$ are two distinct quasiplatonic surfaces of genus three with equations $y^2=x^8-1$ and $y^4+x^4=1$ (see again Table 5.3 in \cite{jones}). By Theorem \ref{thm:ben}, the number of quasiplatonic topological $C_8$-actions of type $(2,8,8)$ is
\begin{equation*}
T(2,8,8)=\frac{1}{2}\left(\tau_1(8,2)+\phi(2)\right)=\frac{1}{2}(1+1)=1,
\end{equation*}
whereas the number of actions of type $(4,8,8)$ is
\begin{equation*}
T(4,8,8)=\frac{1}{2}\left(\tau_1(8,4)+\phi(4)\right)=\frac{1}{2}(2+2)=2.
\end{equation*}
This verifies our main result of Theorem \ref{thm:qceven}:
\begin{equation*}
3=T(2,8,8)+T(4,8,8)=QC(8)=\frac{1}{6}\cdot 8\left(1+\frac{1}{2}\right)-1+2.
\end{equation*}
Now, there are $r(C_8)=8\left(1+\frac{1}{2}\right)=12$ regular dessins with a $C_8$ group of automorphisms. Disregarding the three on the Riemann sphere, nine of these dessins lie on the genus two or three surfaces mentioned above. See Figure \ref{fig:dessinsc8} for these nine regular dessins, obtained after the edge identifications designated by arrows. For each dessin in the figure, once vertices and faces are interchanged, one obtains two other inequivalent dessins of the same type with equivalent generating vectors.
\begin{center}
\begin{figure}[ht]
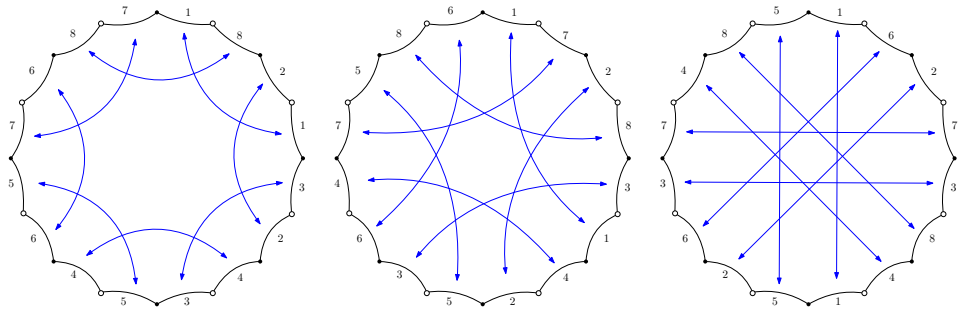

\includegraphics[page=19,scale=.43]{latinx.pdf}\quad \includegraphics[page=20,scale=.43]{latinx.pdf}\quad  \includegraphics[page=21,scale=.43]{latinx.pdf}
\caption{By interchanging white and black vertices, white vertices and faces, or black vertices and faces, one can obtain all nine regular dessins with a $C_8$ automorphism group on higher genus surfaces.}\label{fig:dessinsc8}
\end{figure}
\end{center}
The associated generating vectors for the dessins in Figure \ref{fig:dessinsc8}, in order from left to right, are
\begin{align*}
(1,6,1), \ &(1,1,6) \ (6,1,1), \\
(2,5,1), \ &(1,2,5) \ (5,1,2), \\
(3,4,1), \ &(1,3,4) \ (4,1,3).
\end{align*}
Each row indeed defines the three equivalence classes of generating vectors, as predicted by $QC(8)=3$.
\end{example}

We include a table of the first 40 values of $QC(n)$ in Figure \ref{fig:table}. We have also included a graph of the first 2,000 values of $QC(n)$ in Figure \ref{fig:qcgraph}.

\begin{figure}[ht]
\begin{multicols}{3}

\begin{tabular}{|l|l|} \hline
$n$ & $QC(n)$  \\ \hline \hline
$1$ & $0$  \\ \hline
$2$ & $0$  \\ \hline
$3$ & $0$  \\ \hline
$4$ & $0$  \\ \hline
$5$ & $1$  \\ \hline
$6$ & $1$  \\ \hline
$7$ & $2$  \\ \hline
$8$ & $3$  \\ \hline
$9$ & $2$  \\ \hline
$10$ & $3$  \\ \hline
$11$ & $2$  \\ \hline
$12$ & $5$  \\ \hline
$13$ & $3$  \\ \hline
$14$ & $4$  \\ \hline
$15$ & $5$  \\ \hline
\end{tabular}

\begin{tabular}{|l|l|} \hline
$n$ & $QC(n)$  \\ \hline \hline
$16$ & $5$  \\ \hline
$17$ & $3$  \\ \hline
$18$ & $6$  \\ \hline
$19$ & $4$  \\ \hline
$20$ & $7$  \\ \hline
$21$ & $7$  \\ \hline
$22$ & $6$  \\ \hline
$23$ & $4$  \\ \hline
$24$ & $11$  \\ \hline
$25$ & $5$  \\ \hline
$26$ & $7$  \\ \hline
$27$ & $6$  \\ \hline
$28$ & $9$  \\ \hline
$29$ & $5$  \\ \hline
$30$ & $13$  \\ \hline
\end{tabular}

\begin{tabular}{|l|l|}\hline
$n$ & $QC(n)$  \\ \hline \hline
$31$ & $6$  \\ \hline
$32$ & $9$  \\ \hline
$33$ & $9$  \\ \hline
$34$ & $9$  \\ \hline
$35$ & $9$  \\ \hline
$36$ & $13$  \\ \hline
$37$ & $7$ \\ \hline
$38$ & $10$  \\ \hline
$39$ & $11$  \\ \hline
$40$ & $15$ \\ \hline
\end{tabular}

\end{multicols}
\caption{The first 40 values of $QC(n)$.}\label{fig:table}
\end{figure}

\begin{figure}[ht]
\includegraphics[scale=.7]{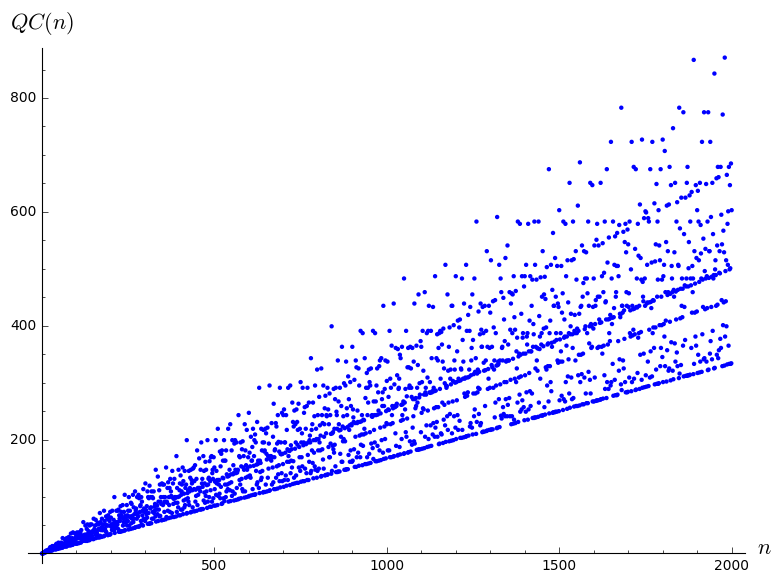}
\caption{The first 2,000 values of $QC(n)$. Sage code used in coding $QC(n)$ and making this figure is available upon request.}\label{fig:qcgraph}
\end{figure}

\subsection{Connection to K. Lloyd's Result}\label{subsec:lloyd}

K. Lloyd \cite{lloyd1972} provided a generating function for the number of distinct topological actions of the cyclic group $C_p$ for an odd prime $p$ on compact Riemann surfaces of arbitrary number of periods. We follow Lloyd's notation. Let $N(\Gamma_\rho,\Z_p)$ denote the number of topological actions of $\Z_p$ on a Riemann surface uniformized by a surface group normally contained in a Fuchsian group with signature $(0;p,\ldots,p)$ having $\rho$-number of periods with $p$.

\begin{theorem}[Lloyd, \cite{lloyd1972}]
\begin{align}
&\sum_{\rho\geq 3}N(\Gamma_\rho,\Z_p)x^\rho\nonumber\\
&=\frac{1}{p-1}\left(\frac{1}{p}\left(\frac{1}{(1-x)^{p-1}}+(p-1)\frac{1-x}{1-x^p}\right)+\sum_{\substack{\ell\ell'=p-1\\ \ell\neq 1}}\phi(\ell)(1-x^\ell)^{-\ell'}\right)\label{eq:lloyd}
\end{align}
\end{theorem}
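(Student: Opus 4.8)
The plan is to reduce the enumeration of topological $\Z_p$-actions to an orbit-counting problem for a finite group, and then evaluate the resulting generating function in closed form.

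First I would set up the combinatorial model. Take $p\geq 5$ (the range relevant here, since $p=2,3$ produce genus $0,1$). By Harvey's Theorem (Theorem \ref{thm:harvey}), every signature $(0;p,\dots,p)$ with $\rho$ periods is admissible: condition (iii) is vacuous because $m=p$ is odd, and (i)--(ii) are immediate since all periods equal $p=n$; moreover the quotient orbifold is hyperbolic with underlying surface of genus $(\rho-2)(p-1)/2$, which is $\geq 2$ exactly when $\rho\geq 3$. By Theorem \ref{thm:topactionfuchs} together with the description of equivalent actions in terms of generating vectors (\cite{broughton}; cf.\ Theorem \ref{thm:kernelepiequiv}), the distinct topological $\Z_p$-actions with $\rho$ periods correspond bijectively to the orbits of $G:=S_\rho\times\Z_p^*$ on
\begin{equation*}
X_\rho=\{(\eta_1,\dots,\eta_\rho)\in(\Z_p^*)^\rho:\ \eta_1+\cdots+\eta_\rho\equiv 0\ (\mathrm{mod}\ p)\},
\end{equation*}
where $\Z_p^*\cong\aut(\Z_p)$ acts by simultaneous scaling and $S_\rho$ arises from $\aut(\Delta)$; the generation requirement on a generating vector is automatic since every nonzero residue generates $\Z_p$. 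The point requiring care is that $\aut(\Delta)$ induces exactly the symmetric group $S_\rho$ on the $\rho$ periods: besides the obvious permutations, $\aut(\Delta)$ contains the braid-type automorphisms $(\dots,c_i,c_{i+1},\dots)\mapsto(\dots,c_{i+1},c_{i+1}^{-1}c_ic_{i+1},\dots)$, and upon passing to the abelian quotient $\Z_p$ these collapse to ordinary transpositions, so nothing beyond $S_\rho$ is produced.

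Next I would run Burnside's lemma in two stages. Quotienting by $S_\rho$ first replaces tuples by multisets, so $N(\Gamma_\rho,\Z_p)$ is the number of $\Z_p^*$-orbits on the set $M_\rho$ of size-$\rho$ multisets drawn from $\Z_p^*$ whose elements sum to $0$; hence $N(\Gamma_\rho,\Z_p)=\frac{1}{p-1}\sum_{c\in\Z_p^*}f_\rho(c)$ with $f_\rho(c)=|\mathrm{Fix}_{M_\rho}(c)|$. I then group $c$ by multiplicative order $\ell\mid p-1$, of which there are $\phi(\ell)$. If $\ell>1$, then $\langle c\rangle$ acts freely on $\Z_p^*$, so a $\langle c\rangle$-fixed multiset is a union of full $\langle c\rangle$-orbits with arbitrary multiplicities; each orbit $\{y,cy,\dots,c^{\ell-1}y\}$ has sum $y(1+c+\cdots+c^{\ell-1})=y(c^{\ell}-1)/(c-1)=0$, so the sum condition is free, and as there are $\ell':=(p-1)/\ell$ orbits one gets $\sum_{\rho}f_\rho(c)\,x^{\rho}=(1-x^{\ell})^{-\ell'}$. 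If $\ell=1$, then $f_\rho(1)$ counts size-$\rho$ multisets from $\Z_p^*$ with sum $\equiv 0$; a roots-of-unity filter, using that $a\mapsto ta$ permutes $\Z_p^*$ for $t\not\equiv 0$ and that $\prod_{a\in\Z_p^*}(1-\zeta^{a}x)=(1-x^{p})/(1-x)$ for a primitive $p$-th root of unity $\zeta$, gives
\begin{equation*}
\sum_{\rho}f_\rho(1)\,x^{\rho}=\frac{1}{p}\left(\frac{1}{(1-x)^{p-1}}+(p-1)\frac{1-x}{1-x^{p}}\right).
\end{equation*}

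Finally, weighting each contribution by $\phi(\ell)$, summing over all $\ell\mid p-1$, and dividing by $p-1$ yields
\begin{equation*}
\sum_{\rho}N(\Gamma_\rho,\Z_p)\,x^{\rho}=\frac{1}{p-1}\left(\frac{1}{p}\left(\frac{1}{(1-x)^{p-1}}+(p-1)\frac{1-x}{1-x^{p}}\right)+\sum_{\ell\mid p-1,\ \ell\neq 1}\phi(\ell)(1-x^{\ell})^{-(p-1)/\ell}\right),
\end{equation*}
which is exactly \eqref{eq:lloyd} after writing $\ell\ell'=p-1$; restricting the left-hand sum to $\rho\geq 3$, as in the statement, discards only the geometrically meaningless low-degree terms. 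The main obstacle is the very first step --- establishing rigorously that equivalence of $(0;p,\dots,p)$-generating vectors is precisely the $S_\rho\times\Z_p^*$-action, i.e.\ that the braid moves in $\aut(\Delta)$ contribute nothing beyond transpositions modulo $p$ --- after which the remainder is a routine, if slightly intricate, application of Burnside's lemma, a roots-of-unity filter, and the standard multiset generating function $(1-x^{\ell})^{-\ell'}$.
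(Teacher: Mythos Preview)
The paper does not prove this statement: it is quoted as Lloyd's theorem from \cite{lloyd1972} and used only as a benchmark, the author extracting the $x^3$ coefficient to verify that it agrees with $QC(p)$. There is therefore no proof in the paper to compare against.

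Your argument is correct and is essentially the standard one. The reduction to orbit-counting under $S_\rho\times\Z_p^{*}$ is exactly what underlies the paper's discussion of equivalence classes of generating vectors (Theorem~\ref{thm:kernelepiequiv} and the surrounding remarks), and your observation that the braid automorphisms of $\Delta$ abelianise to transpositions is the right way to justify that nothing beyond $S_\rho$ appears. The Burnside computation is clean: for $c$ of order $\ell>1$ the vanishing of $1+c+\cdots+c^{\ell-1}$ in $\Z_p$ makes the sum constraint automatic, and for $c=1$ the roots-of-unity filter together with $\prod_{k=1}^{p-1}(1-\zeta^{k}x)=(1-x^{p})/(1-x)$ gives the stated rational function. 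Your closing remark about the low-degree terms is also accurate: as written in the paper the identity is between the full generating function on the right and a sum over $\rho\geq 3$ on the left, so strictly speaking the $\rho=0,1,2$ contributions (which are $1,0,1$ respectively and carry no geometric content) must be subtracted; this is a harmless convention rather than a gap.
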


In this paper, we have been interested in $QC(n)$, the number of group actions of $C_n$ on quasiplatonic surfaces with orbit genus zero and having three periods (the so-called quasiplatonic actions). This is the case of $\rho=3$ in \eqref{eq:lloyd}. We can show that the coefficient of $x^3$ in Lloyd's result is $QC(p)$. 

There are no quasiplatonic $C_3$-actions on quasiplatonic surfaces, because the only admissible action would be of the form $(n_1,n_2,n_3)$ with $n_i$ a factor of three, but none of these signatures satisfy $1/n_1+1/n_2+1/n_3<1$. Thus, we consider the only two cases for odd primes $p$: when $p\equiv 5\modd 6$ or $p\equiv 1\modd 6$. In either case, the only contribution to the $x^3$ term comes only from $1/(1-x)^{p-1}$ and $\sum_{\substack{\ell\ell'=p-1\\ \ell\neq 1}}\phi(\ell)(1-x^\ell)^{-\ell'}$ when $\ell=3$. The latter sum contributes to the $x^3$ term if and only if $\ell=3$, which happens if and only if $p-1$ is divisible by three. This only occurs when $p\equiv 1\modd 6$.

First assume $p\equiv 5\modd 6$. As stated in the prior paragraph, we only have contribution to $x^3$ coming from $1/((p-1)p(1-x)^{p-1})$. Because
\begin{equation*}
\frac{1}{1-x}=\sum_{k=0}^\infty x^k,
\end{equation*}
which converges absolutely for $|x|<1$, we can take derivatives to obtain
\begin{equation*}
\frac{1}{(1-x)^{p-1}}=\sum_{k=p-2}^\infty\binom{k}{p-2}x^{k-(p-2)}.
\end{equation*}
We seek the coefficient of $x^{k-(p-2)}$ when $k-(p-2)=3$, i.e., when $k=p+1$. The coefficient of $x^3$ is then given as
\begin{equation*}
\binom{p+1}{p-2}=\frac{(p+1)p(p-1)}{6}.
\end{equation*}
Thus, the coefficient of $x^3$ in \eqref{eq:lloyd} when $p\equiv 5\modd 6$ is
\begin{equation*}
\frac{1}{(p-1)p}\cdot\frac{(p+1)p(p-1)}{6}=\frac{p+1}{6}=QC(p).
\end{equation*}

Next, assume $p\equiv 1\modd 6$. The same calculation holds as in the previous case when $p\equiv 5\modd 6$, so we have a factor of $(p+1)/6$ in the coefficient of $x^3$. However, we must also consider the additional contribution from $\sum_{\substack{\ell\ell'=p-1\\ \ell\neq 1}}\phi(\ell)(1-x^\ell)^{-\ell'}$, which occurs when $\ell=3$. This means $\ell'=(p-1)/3$. The additional $x^3$ contribution thus arises in the term
\begin{equation*}
\frac{2}{p-1}\cdot\frac{1}{(1-x^3)^{(p-1)/3}}.
\end{equation*}
Starting from
\begin{equation*}
\frac{1}{1-x^3}=\sum_{k=0}^\infty x^{3k},
\end{equation*}
take derivatives to obtain the general formula
\begin{equation*}
\frac{1}{(1-x^3)^n}=\sum_{k=n-1}^\infty\frac{3k(3k-3)(3k-6)\cdots(3k-3(n-2))}{(n-1)!\cdot 3^{n-1}}x^{3k-3(n-1)}.
\end{equation*}
When $n=(p-1)/3$, the above equation becomes
\begin{equation*}
\frac{1}{(1-x^3)^{(p-1)/3}}=\sum_{k=(p-4)/3}^\infty\frac{3k(3k-3)\cdots (3k-p+7)}{\left(\frac{p-4}{3}\right)!\cdot 3^{(p-4)/3}}x^{3k-p+4}.
\end{equation*}
We are interested in the $x^3$ coefficient, that is, when $3k-p+4=3$, or when $k=(p-1)/3$. The coefficient is then
\begin{equation*}
\frac{(p-1)(p-4)(p-7)\cdots (p-(p-9))(p-(p-6))}{\left(\frac{p-1}{3}-1\right)\left(\frac{p-1}{3}-2\right)\cdots\left(\frac{p-1}{3}-\frac{p-7}{3}\right)\cdot 3^{(p-4)/3}}
\end{equation*}
Because $n-1=(p-4)/3$ and there are $n-1$ terms in the numerator, the above expression simplifies to
\begin{equation*}
\frac{(p-1)\cdot 3^{n-2}}{3^{n-1}}=\frac{p-1}{3}.
\end{equation*}
Recall the factor of $2/(p-1)$ in front of $1/(1-x^3)^{(p-1)/3}$. The additional $x^3$ contribution is thus
\begin{equation*}
\frac{2}{p-1}\cdot\frac{p-1}{3}=\frac{2}{3}.
\end{equation*}
Summing together the original calculation from the previous case, we see that the coefficient of $x^3$ in \eqref{eq:lloyd} when $p\equiv 1\modd 6$ is
\begin{equation*}
\frac{p+1}{6}+\frac{2}{3}=QC(p).
\end{equation*}

\section{Conclusions and Future Directions}\label{sec:conc}

We have computed the total number of distinct topological actions of $C_n$ on quasiplatonic surfaces. There are some natural extensions of this idea:
\begin{enumerate}
\item Suppose $G$ acts topologically on $X$ of genus $\sigma\geq 2$. Call $G$ a \emph{quasiplatonic group} if the orbit space $X/G$ is a compact surface of genus zero with three branch points. How many distinct topological $G$-actions are there for $G$ an arbitrary quasiplatonic group? Call this count $Q(G)$.
\item How does $Q(G)$ relate to $r(G)$, the number of regular dessins, up to isomorphism, with $\aut(D)\cong G$?
\item What is the growth rate of $QC(n)$? See the graph of $QC(n)$ in Figure \ref{fig:qcgraph}.
\item How does regular dessin equivalence translate to their generating vectors?
\end{enumerate}

\section{Acknowledgments}

The author would like to thank his advisor, Professor Ren Guo, for his guidance and support in this research, as well as Professor Aaron Wootton, Professor Allen Broughton, and Professor Steve Wilson for their insight and conversation. The author would also like to thank the faculty, staff, and students of the Department of Mathematics at Oregon State University for their sustained encouragement in furthering this research.

\end{document}